\newtheorem{thm}{Theorem}[section]
\newtheorem{cor}[thm]{Corollary}
\newtheorem{lem}[thm]{Lemma}
\theoremstyle{definition}
\newtheorem{defn}[thm]{Definition}
\theoremstyle{remark}
\newtheorem{rem}[thm]{Remark}
\newtheorem{exa}[thm]{Example}
\numberwithin{equation}{section}
\newcommand{\CC}{\mathbb{C}}                
\newcommand{\NN}{\mathbb{N}}                
\newcommand{\RR}{\mathbb{R}}                
\newcommand{\ZZ}{\mathbb{Z}}                
\newcommand{\GL}{\mathrm{GL}}               
\newcommand{\SL}{\mathrm{SL}}               
\newcommand{\OO}{\mathrm{O}}                
\newcommand{\SO}{\mathrm{SO}}               
\newcommand{\VV}{\mathbb{V}}                
\newcommand{\WW}{\mathbb{W}}                
\newcommand{\Ela}{\mathbb{E}\mathrm{la}}    
\newcommand{\Gel}{\mathbb{G}\mathrm{el}}    
\newcommand{\Piez}{\mathbb{P}\mathrm{iez}}  
\newcommand{\TT}{\mathbb{T}}                
\newcommand{\Sym}{\mathbb{S}}               
\newcommand{\HT}[1]{\mathbb{H}^{#1}(\mathbb{R}^{2})}        
\newcommand{\HP}[1]{\mathcal{H}_{#1}(\mathbb{R}^{2})}       
\newcommand{\Pn}[1]{\mathcal{P}_{#1}(\mathbb{R}^{2})}       
\newcommand{\cov}{\mathbf{Cov}}
\newcommand{\inv}{\mathbf{Inv}}
\newcommand{\bm}{\mathbf{m}}
\newcommand{\ee}{\pmb{e}}                   
\newcommand{\vv}{\pmb{v}}                   
\newcommand{\ww}{\pmb{w}}                   
\newcommand{\xx}{\pmb{x}}                   
\newcommand{\EE}{\pmb{E}}                   
\newcommand{\zb}{\overline{z}}              
\newcommand{\qq}{\mathrm{q}}                
\newcommand{\rp}{\mathrm{p}}                
\newcommand{\rh}{\mathrm{h}}                
\newcommand{\id}{\mathbf{1}}
\newcommand{\ba}{\mathbf{a}}
\newcommand{\bc}{\mathbf{c}}
\newcommand{\bd}{\mathbf{d}}
\newcommand{\be}{\mathbf{e}}
\newcommand{\bh}{\mathbf{h}}
\newcommand{\bepsilon}{\pmb{\epsilon}}
\newcommand{\bsigma}{\pmb{\sigma}}
\newcommand{\lc}{\pmb{\varepsilon}}         
\newcommand{\bC}{\mathbf{C}}                
\newcommand{\bH}{\mathbf{H}}                
\newcommand{\bK}{\mathbf{K}}                
\newcommand{\bP}{\mathbf{P}}                
\newcommand{\bPi}{\pmb{\Pi}}                
\newcommand{\pieze}{\mathbf{e}}             
\newcommand{\bS}{\mathbf{S}}                
\newcommand{\bT}{\mathbf{T}}                
\newcommand{\bU}{\mathbf{U}}                
\newcommand{\bV}{\mathbf{V}}                
\newcommand{\bW}{\mathbf{W}}                
\DeclareMathOperator{\tr}{tr}
\DeclareMathOperator{\rg}{Rank}
\renewcommand\Re{\operatorname{\mathfrak{Re}}}
\renewcommand\Im{\operatorname{\mathfrak{Im}}}
\renewcommand\vec{\pmb}
\DeclareMathOperator{\1dot}{\cdot}
\DeclareMathOperator{\3dots}{\resizebox{0.5ex}{!}{\textbf{\vdots}}}
\newcommand{\rdots}[1]{\overset{(#1)}{\cdot}}
\newcommand{\norm}[1]{\left\Vert#1\right\Vert}
\newcommand{\abs}[1]{\left\vert#1\right\vert}
\newcommand{\set}[1]{\left\{#1\right\}}
\newcounter{mtligne}
\begin{document}

\title[Computation of minimal covariants bases]{Computation of minimal covariants bases \protect\\ for 2D coupled constitutive laws}%

\author{B. Desmorat}
\address[Boris Desmorat]{Sorbonne Université, CNRS, Institut Jean Le Rond d’Alembert, UMR 7190, 75005 Paris, France}
\email{boris.desmorat@sorbonne-universite.fr}
\author{M. Olive}
\address[Marc Olive]{Université Paris-Saclay, ENS Paris-Saclay, CentraleSupélec, CNRS, LMPS - Laboratoire de Mécanique Paris-Saclay, 91190, Gif-sur-Yvette, France}
\email{marc.olive@math.cnrs.fr}
\author{N. Auffray}
\address[Nicolas Auffray]{Sorbonne Université, CNRS, Institut Jean Le Rond d’Alembert, UMR 7190, 75005 Paris, France}
\email{nicolas.auffray@sorbonne-universite.fr}
%
\author{R. Desmorat}
\address[Rodrigue Desmorat]{Université Paris-Saclay, ENS Paris-Saclay, CentraleSupélec, CNRS, LMPS - Laboratoire de Mécanique Paris-Saclay, 91190, Gif-sur-Yvette, France}
\email{rodrigue.desmorat@ens-paris-saclay.fr}
\author{B. Kolev}
\address[Boris Kolev]{Université Paris-Saclay, ENS Paris-Saclay, CentraleSupélec, CNRS, LMPS - Laboratoire de Mécanique Paris-Saclay, 91190, Gif-sur-Yvette, France}
\email{boris.kolev@math.cnrs.fr}

\subjclass[2010]{74B05; 15A72; 74-04}
\keywords{Constitutive tensors; Tensor invariants; Integrity basis; Higher-order tensors; Polynomial covariants; Harmonic decomposition}%

\date{April 24, 2023}%

\thanks{Rodrigue Desmorat, Boris Kolev and Marc Olive were partially supported by CNRS Projet 80--Prime GAMM (Géométrie algébrique complexe/réelle et mécanique des matériaux).}


\begin{abstract}
  We produce minimal integrity bases for both isotropic and hemitropic invariant algebras (and more generally covariant algebras) of most common bidimensional constitutive tensors and -- possibly coupled -- laws, including piezoelectricity law, photoelasticity, Eshelby and elasticity tensors, complex viscoelasticity tensor, Hill elasto-plasticity, and (totally symmetric) fabric tensors up to twelfth-order. The concept of covariant, which extends that of invariant is explained and motivated. It appears to be much more useful for applications. All the tools required to obtain these results are explained in detail and a cleaning algorithm is formulated to achieve minimality in the isotropic case. The invariants and covariants are first expressed in complex forms and then in tensorial forms, thanks to explicit translation formulas which are provided. The proposed approach also applies to any $n$-uplet of bidimensional constitutive tensors.
\end{abstract}

\maketitle


\begin{scriptsize}
  \setcounter{tocdepth}{2}
  \tableofcontents
\end{scriptsize}

\section{Introduction}
\label{sec:intro}

The modern assertion that \emph{a physics is a group}~\cite{Sou1996} has important consequences regarding the \emph{invariance} of the physical quantities. Hence, these questions have to be formulated and studied within the Mathematical framework of \emph{groups} and \emph{Representation Theory}~\cite{Ste1994}. In $d$-dimensional solid mechanics, the invariance properties of constitutive laws are {formulated} with respect to the full orthogonal group $\OO(d)$. Quantities that are invariant with respect to the special orthogonal group $\SO(d)$ are called \emph{hemitropic invariants}, while those that are invariant with respect to the full group $\OO(d)$ are called \emph{isotropic invariants}.

In continuum mechanics, constitutive equations, linear or non-linear, are naturally described using tensors~\cite{Gur1973,TN2004}. Yet classical in linear theories, \emph{constitutive tensors} are also encountered in non-linear mechanics of materials such as, for instance, anisotropic elasto-plasticity (\textit{e.g.} \emph{Hill yield tensor}~\cite{Hil1948}), in continuum damage mechanics (for a description of damage anisotropy see~\cite{Cha1978,Cha1984,Ona1984,LC1985,Ju1989,LD2005}) and in nonlinear piezoelectricity/magnetism (\emph{e.g.} the magnetostriction morphic tensor~\cite{DTdLac1993,Hub2019}).

Orders of constitutive tensors, which are usually lower than four in classical linear elasticity and piezoelectricity, can however reach six in generalized continuum theories, such as strain gradient or micromorphic continua~\cite{AQH2013,AHQ2019,Pol2018,Ber2016}. They can be odd orders, forbidding then the definition of \emph{spectral invariants}
from Kelvin's matrix representation\footnote{Moreover, the spectral invariants of the $\RR^{d(d+1)/2}$--Kelvin matrix of an elasticity tensor do not characterize the geometry of its orbits in $\RR^{d}$ (with $d=2$ in 2D and $d=3$ in 3D). They are algebraic invariants of $\OO(d(d+1)/2)$. Any function invariant with respect to $\OO(d(d+1)/2)$ is also invariant with respect to $\OO(d)$, but the converse is false.}  \cite{Bet1987,CM1990, KO09,BBS2008}. They can even rise up beyond order six when fabric tensors are involved~\cite{Oda1982,Kan1984,JT1984,Kac1993,TNS2001,RKM2009,DK2016,CW2010,CDC2018}. A sound analysis of these tensors, of their invariants and their symmetry classes, gives precious information and modelling tools for the physics that can be described by them.

Tensors having symmetries can be described using their invariants and covariants \cite{OKDD2018,ODK2021,OKDD2022}.
Note however that the invariants must be chosen inside a given class: \emph{polynomial}, \emph{rational} or \emph{algebraic} (such as the eigenvalues of a matrix). Besides, it is important to describe the tensors properties by a finite number of such invariants. To do so, mathematical definitions are required. For instance, one could be interested to describe the algebra of polynomial invariants using a minimal set of generators of this algebra (usually called an \emph{integrity basis}), or to find a finite \emph{separating set} of invariants ---in a given class--- which can be used to factor any invariant function of these tensors (set usually called a \emph{functional basis}),
with the property that an \textit{integrity basis} is a \textit{functional basis} \cite{WP1964}.
If the later set is probably more pertinent in practical applications and of lower cardinality \cite{MZC2019,DAD2019,OD2021}, there is however no algorithm to obtain such a set. On the contrary, there exist general ---but complex--- algorithms to compute a minimal integrity basis \cite{Wey1997,DK2015,Oli2016}.

The complexity increases with the order of the tensor. In 3D, this complexity is already high, while it remains reasonable in 2D.
For example in 3D, a minimal integrity basis for the elasticity tensor is constituted of 294 invariants (for both $\SO(3)$ and $\OO(3)$~\cite{OKA2017,OKDD2022}), while in 2D it is constituted of 5 invariants for $\OO(2)$ and of 6 invariants for $\SO(2)$ \cite{Via1997}. This huge difference is roughly due to the following facts:
\begin{itemize}
  \item The harmonic decomposition of the space of elasticity tensors contains more harmonic components in 3D than in 2D~\cite{Bac1970,BOR1996}.
  \item The dimension of the space of harmonic tensors of order $n$ is equal to $2n+1$ in 3D  (like the space of spherical harmonics of degree $n$) and is equal to 2 in 2D  (for $n\geq 1$). The later is the dimension of the space generated by $\cos n\theta$ and $\sin n\theta$ in the Fourier decomposition.
\end{itemize}
In 3D, the first step consists in determining an integrity basis for harmonic tensors. Such results are available in the literature but only up to order 5 (\emph{i.e.}, up to degree 10 for binary forms \cite{LO2017}), due to the exponential growth of the computations with the order $n$ of the tensor. In the present contribution on the 2D case, we formulate a general method and provide an algorithm to compute a minimal integrity basis for isotropic/hemitropic invariants and covariants of a tensor or a family of tensors. In particular, we provide a minimal integrity bases for 2D totally symmetric tensors ---such as fabric tensors \cite{Kan1984}--- up to order 12, which include the cases of 2D harmonic tensors of orders 2, 4, 6, 8, 10 and 12.

The structure of 2D constitutive tensors spaces and the determination of their symmetry classes has already been considered in a previous contribution~\cite{AKO2016}. Among the remaining open problems is the formulation of a systematic procedure to determine a minimum integrity basis for either isotropic or hemitropic polynomial invariant functions on a given tensor space. Such a set is useful for two reasons; on the one hand, every polynomial invariant can be recast as a polynomial function of these generating invariants and, on the other hand, it \emph{separates the orbits} (\textit{i.e.} sets of tensors of the same kind), which means that at least one of the generating invariants takes different values, when evaluated on two tensors which are not of the same kind. This last property makes it possible to decide whether, or not, two tensors describe the same material up to an orthogonal transformation.

Let us first present a quick overview of this question in the mechanical community. The story started in 1946 with Weyl's pioneering book~\cite{Wey1997}, from which was extracted the methods and vocabulary still used in continuum mechanics nowadays. It took, however, a few decades before some basic results, such as the determination of a minimal \emph{integrity basis} for a $n$-uplet of three-dimensional second-order tensors and vectors \cite{SR1958/1959,SR1962,Smi1965,Smi1971,KS1974,Riv1997}, were published. In these approaches, a (non necessarily minimal) generating set is obtained first, using for instance \emph{Weyl's polarization theorem}. In a second step, a reduction procedure is achieved, using polynomial relationships (\emph{syzygies}) between the polynomial invariants~\cite{Riv1955,Smi1994}, to eventually obtain a minimal integrity basis. For second-order tensors, these syzygies are essentially derived from Cayley-Hamilton's theorem, and are thus useless for higher order tensors. In that case, only partial results have been obtained~\cite{SR1971,SB1997,Bet1987,Zhe1994,BH1995,ZB1995b}, until recently.

In this approach~\cite{Oli2014a}, the problem of higher order tensors in 3D is recast in the realm of \emph{binary forms}, which are complex homogeneous polynomials in two variables. Using a powerful tool called the \emph{Cartan map}~\cite{Car1981,Bac1970,DAD2019}, an integrity basis for the binary form of degree $2n$ can then be translated into an integrity basis for the harmonic tensor of degree $n$ \cite{DAD2019,OKDD2022}. The gain is that \emph{invariant theory of binary forms} (also know as \emph{Classical Invariant Theory}) is an area of mathematics which has been extensively studied by a wide number of prestigious mathematicians such as Gordan or Hilbert and in which an impressive number of results has already been produced. Combining these results with the use of the \emph{harmonic decomposition}~\cite{Bac1970,Spe1970}, integrity bases for the third order totally symmetric tensor and for the fourth-order elasticity tensor have been obtained recently~\cite{OA2014b,Oli2014a,OKA2017}. In this approach, \emph{Gordan's algorithm} for binary forms~\cite{Gor1868,Gor1873,Gor1875,Gor1987} is used first to generate a (non necessarily minimal) integrity basis and then, a reduction process using modern computational means is achieved to obtain minimality~\cite{Oli2017a}.

In 2D, the situation is much simpler and integrity bases are known in specific situations. For instance, regarding its practical importance for plate theory and laminated structures, the bidimensional (plane) elasticity tensor has been widely studied \cite{Ver1982,BOR1996,HZ1996,Via1997,VV2001,Van2005,dSV2013,FV2014,DD2015}. In~\cite{FV2014}, a comparative review of the literature on $\OO(2)$ and $\SO(2)$ invariants of the elasticity tensor is provided. Recently, there has been an attempt (unfortunately with mistakes) to determine an integrity basis for fourth-order tensors of Eshelby type -- \emph{i.e.} photoelasticity type~\cite{MZC2019} -- and partial results for the piezoelectricity tensor are already known~\cite{Van2007}. Nevertheless, analysing the literature, it appears that a general, effective and systematic method to compute a minimal integrity basis for coupled constitutive laws and more generally for tensors of any order is still lacking, and that almost no results are known for the covariant integrity bases \cite{OKDD2022}. We moreover point out that the literature results on 2D invariants are rarely expressed using tensorial expressions.

Concerning practical applications, an integrity basis is required to formulate invariant relations that characterize intrinsic properties of a constitutive law, such as the belonging to a symmetry class \cite{Ver1982,Via1997,AKP2013,AR2016,OKDD2022}, the special ($r_0$) orthotropy \cite{VV2001,Van2005} or the existence of a pentamode~\cite{MC1995,KBS2012,DA2019}.
Such kind of relations are interesting for optimal design algorithms since they allow to formulate frame-independent constraints on the sought material \cite{VVA2013,RIBD2018}. Invariants for higher order tensors will naturally find applications to extend this approach to the generalized (Mindlin) elasticity models used to describe the effective behaviour of architectured materials~\cite{AS2018,PSM2018,RA2019}.

The goal of the present contribution is to propose a general and effective method to compute a minimal integrity basis for any $\OO(2)$ or $\SO(2)$ representation and to apply it to continuum mechanics constitutive laws. To this end, we follow the path traced by Vianello~\cite{Via1997} for the bidimensional elasticity tensor and formulate the problem within the framework of \emph{Invariant Theory}~\cite{Wey1997,Spr1980,Olv1999,KP2000,DK2015}. This allows us to produce a minimal integrity basis for 2D higher order tensors with or without any particular index symmetry, under both groups $\OO(2)$ and $\SO(2)$. Minimality of the integrity bases is obtained using a Computer Algebra System and an algorithm which is explained in details.
Integrity bases are first formulated using complex variables (as in \cite{Via1997}). In a more mechanistic way, we provide an original process allowing to  translate all these expressions into tensorial ones.

This paper intents to be as self-contained as possible, and many illustrating examples are provided along the lines.

The outline of the paper is the following. In~\autoref{sec:tensors}, we recall basic operations on tensors, some of which are well-known and others are new. The link between totaly symmetric tensors and homogeneous polynomials is explained. Main concepts from the theory of linear representations of the orthogonal groups $\OO(2)$ and $\SO(2)$ are presented in~\autoref{sec:representation-theory}. In~\autoref{sec:invariant-theory}, we introduce basic notions of \emph{Invariant Theory}, such as polynomial invariants and covariants. The main results of the paper are given in~\autoref{sec:integrity-bases}, where integrity bases for bidimensional tensors of any order (and more generally any linear representation of $\OO(2)$ and $\SO(2)$) are derived. Hemitropic (\textit{i.e.} for $\SO(2)$) integrity bases produced are already minimal but isotropic ones (\textit{i.e.} for $\OO(2)$) are not. A \emph{cleaning algorithm} to achieve this task is formulated in~\autoref{sec:algorithm}. The computed integrity bases are written in terms of complex monomials. It is more useful, in mechanics, to express them using tensorial operations. These translation rules are formulated in~\autoref{sec:monomials-versus-tensors-invariants}. In~\autoref{sec:applications}, we illustrate the power of our methods by providing minimal integrity bases for an extensive list of constitutive tensors (up to twelfth-order) and coupled laws in mechanics of materials,
including Eshelby/photoelasticity tensors, linear viscoelasticity, Hill elasto-plasticity, linear piezoelectricity and  fabric tensors. Besides, four appendices are provided to detail and deepen some technical points.

\section{Tensorial operations}
\label{sec:tensors}

This paper is about tensors polynomial invariants. In this section we recall basic operations on tensors, some of them are well-known, others are less. We shall denote by $\TT^{n}(\RR^{2})=\otimes^{n} \RR^2$, the vector space of 2D tensors of order $n$. Using the Euclidean structure of $\RR^{2}$, we will not make any difference between covariant, contravariant or mixed tensors. We will encounter tensors with various index symmetries, among them tensors which are totally symmetric. The subspace of $\TT^{n}(\RR^{2})$ of totally symmetric tensors will be denoted by $\Sym^{n}(\RR^{2})$. Given $\bT \in \TT^{n}(\RR^{2})$, the total symmetrization (over all subscripts) of $\bT$, denoted by $\bT^{s}$ is a projector from $\TT^{n}(\RR^{2})$ onto $\Sym^{n}(\RR^{2})$. The following tensorial operations will be used (see also \cite{DAD2019,OKDD2022}).
\begin{enumerate}
  \item The \emph{symmetric tensor product} between two tensors $\bS_{1}\in\Sym^{n_{1}}(\RR^{2})$ and $\bS_{2}\in\Sym^{n_{2}}(\RR^{2})$, defined as
        \begin{equation}\label{eq:odot}
          \bS_{1}\odot\bS_{2} : =  (\bS_{1}\otimes\bS_{2})^{s}\in\Sym^{n_{1} + n_{2}},
        \end{equation}

  \item The \emph{$r$-contraction} of two tensors $\bT_{1}\in\TT^{n_{1}}(\RR^{2})$ and $\bT_{2}\in\TT^{n_{2}}(\RR^{2})$, defined in any orthonormal basis as
        \begin{equation}\label{eq:rcontract}
          (\bT_{1}\rdots{r}\bT_{2})_{i_{1}\cdots i_{n_{1}-r}j_{r + 1}\cdots j_{n_{2}}} := T^{1}_{i_{1}\cdots i_{n_{1}-r}k_{1} \cdots k_{r}}T^{2}_{k_{1} \cdots k_{r}j_{r + 1}\cdots j_{n_{2}}},
        \end{equation}
        which is a tensor of order $n_{1} + n_{2} - 2r$.

  \item The \emph{skew-symmetric contraction} between two totally symmetric tensors $\bS_{1}\in\Sym^{n_{1}}(\RR^{2})$ and $\bS_{2}\in\Sym^{n_{2}}(\RR^{2})$ is defined as
        \begin{equation}\label{eq:cross}
          (\bS_{1}\times \bS_{2}) : = -(\bS_{1} \cdot \lc \cdot \bS_{2})^{s}\in \Sym^{n_{1} + n_{2} - 2}(\RR^{2}),
        \end{equation}
        where $\lc$ is the 2D Levi–Civita tensor. In any orthonormal basis  $(\ee_{1}, \ee_{2})$, we get
        $\varepsilon_{ij}=\det(\ee_{i}, \ee_{j})$ and
        \begin{equation*}
          (\bS_{1} \times \bS_{2})_{i_{1}\ldots i_{n_{1} + n_{2}-2}} = - \left(\varepsilon_{jk} S^{1}_{ji_{1}\ldots i_{n_{1}}}S^{2}_{ki_{n_{1} + 1} \ldots i_{n_{1} + n_{2}-2}} \right)^{s}.
        \end{equation*}
\end{enumerate}

There is a well-known correspondence $\phi: \Sym^{n}(\RR^{2})\to \Pn{n}$ between totally symmetric tensors of order $n$ on $\RR^{2}$ and homogeneous polynomials of degree $n$ in two variables. Given $\bT \in \TT^{n}(\RR^{2})$ we associate to it the polynomial $\rp = \phi(\bT)$ in $\Pn{n}$, where
\begin{equation}\label{eq:symmetric-tensor-polynomial}
  \rp(\xx) = \bT(\xx, \dotsc, \xx).
\end{equation}
In components, this writes
\begin{equation*}
  \rp(\xx) = T_{i_{1} i_{2} \dotsc i_{n}} x_{i_{1}}x_{i_{2}} \dotsc x_{i_{n}}, \quad \text{where} \quad \xx=(x_{1}, x_{2})=(x,y).
\end{equation*}
Note that $\phi(\bT) = \phi(\bT^{s})$ and that when restricted to $\Sym^{n}(\RR^{2})$, this correspondence $\bS \mapsto \rp$ is a bijection, the inverse operation being given by the \emph{polarization} of $\rp$ (see~\cite{Wey1997,Bac1970,Bae1993,OKDD2022}). Making use of this correspondence, the three tensorial operations defined above are recast into polynomial operations as follows. Let $\rp_{i} := \phi(\bS_{i})$ for $i = 1,2$, be the homogeneous polynomials associated with $\bS_{i}\in \Sym^{n_{i}}(\RR^2)$. Then,
\begin{enumerate}
  \item the symmetric tensor product $\bS_{1}\odot\bS_{2}$ translates as $\rp_{1} \rp_{2}$;
  \item the symmetrized $r$-contraction $(\bS_{1}\rdots{r}\bS_{2})^{s}$ translates as
        \begin{equation}\label{eq:rcontractp}
          \left\{\rp_{1},\rp_{2}\right\}_{r} := \frac{(n_{1}-r)! (n_{2}-r)!}{n_{1}! n_{2}!} \sum_{k=0}^{r} \binom{r}{k} \frac{\partial^{r} \rp_{1}}{\partial x^{k} \partial y^{r-k}} \frac{\partial^{r} \rp_{2}}{\partial x^{k} \partial y^{r-k}};
        \end{equation}
  \item The skew-symmetric contraction $\bS_{1} \times \bS_{2}$ translates as
        \begin{equation}\label{eq:crossp}
          \left[\rp_{1},\rp_{2}\right] :=- \frac{1}{n_{1}n_{2}} \det (\nabla\rp_{1},\nabla\rp_{2}),
        \end{equation}
        where $\nabla$ denotes the gradient.
\end{enumerate}

\section{Real linear representations of 2D orthogonal groups}
\label{sec:representation-theory}

The full \emph{orthogonal group} in dimension $2$, denoted by $\OO(2)$, is defined as the set of linear isometries of the canonical scalar product on $\RR^{2}$. In the canonical basis $(\ee_{1},\ee_{2})$, this group is represented by the two-by-two matrices $g$ which satisfies $g^{t}g = I$. In particular, we have $\det g = \pm 1$. The subset of matrices $g$ such that $\det g = 1$ is a subgroup of $\OO(2)$, denoted by $\SO(2)$, which is the rotation group of the Euclidean space $\RR^{2}$, each rotation being represented by the matrix
\begin{equation}\label{eq:rtheta}
  r_{\theta} =  \begin{pmatrix}
    \cos \theta & -\sin \theta \\
    \sin \theta & \cos \theta  \\
  \end{pmatrix}.
\end{equation}
The full orthogonal group is obtained from $\SO(2)$ by adding the reflection with respect to the horizontal axis
\begin{equation}\label{eq:sigma}
  \sigma: =
  \begin{pmatrix}
    1 & 0  \\
    0 & -1 \\
  \end{pmatrix}.
\end{equation}
Besides, each element of $\OO(2)$ can be written, either as $r_{\theta}$ (if $\det g = 1$) or $\sigma r_{\theta}$ (if $\det g = -1$), and we have the relations
\begin{equation*}
  \sigma^{2} = id, \qquad \sigma r_{\theta}  = r_{-\theta}\sigma ,
\end{equation*}
where $\sigma r_{\theta} = r_{-\theta}\sigma$ is the reflection with respect to the axis
\begin{equation*}
  r_{-\theta/2}(\ee_{1})=\cos\left(\frac{\theta}{2}\right)\ee_{1}-\sin\left(\frac{\theta}{2}\right)\ee_{2}.
\end{equation*}

Next, we recall a few basic concepts in \emph{representation theory of groups}. More details can be found, for instance, in \cite{Ste1994}.

\begin{defn}\label{def:linear-representation}
  A \emph{linear representation} $(\VV,\rho)$ of a group $G$ on a vector space $\VV$ is a linear action of $G$ on $\VV$. More precisely, it is given by a mapping
  \begin{equation*}
    \rho: G \to \GL(\VV),
  \end{equation*}
  where $\GL(\VV)$ is the group of invertible linear mappings on $\VV$ and such that $\rho(g_{1}g_{2}) = \rho(g_{1})\rho(g_{2})$, for all $g_{1}, g_{2} \in G$.
\end{defn}

Linear representations of $G=\SO(2)$ and $G=\OO(2)$ play a fundamental role in 2D solid mechanics. They arise, for instance, when $\VV = \Ela$ is the space of fourth-order plane elasticity tensors, or when $\VV=\Piez$ is the space of third-order bidimensional piezoelectric tensors.

A linear representation is by definition linear in $\vv \in \VV$ and thus $\rho(g)$ is represented by a matrix $[\rho(g)]$ once a basis of $\VV$ is fixed. A basic example is provided by the standard representation of $\OO(2)$ on $\VV = \TT^{n}(\RR^{2})$, the vector space of $n$-th order tensors of dimension $2^{n}$. In the canonical basis of $\RR^{2}$, $(\ee_{1},\ee_{2})$, the tensor $\rho(g)\bT$ has for components
\begin{equation}\label{eq:n-order-tensorial-representation}
  (\rho(g)\bT)_{i_{1} \dots i_{n}} = \sum_{j_{1}, \dotsc , j_{n}} g_{i_{1} j_{1}} \dotsm g_{i_{n} j_{n}} T_{j_{1} \dots j_{n}}.
\end{equation}
By the way, a natural basis for $\TT^{n}(\RR^{2})$ is provided by $(\be_{i_{1} \dotsb i_{n}})$, where
\begin{equation*}
  \be_{i_{1} \dotsb i_{n}} := \ee_{i_{1}} \otimes \dotsb \otimes \ee_{i_{n}},
\end{equation*}
and the \emph{lexicographic order} has been adopted on multi-index $(i_{1}, \dotsc , i_{n})$. Thus, the corresponding matrix representation in this basis writes
\begin{equation*}
  [\rho(g)\bT] = [\rho(g)][\bT],
\end{equation*}
where, introducing the multi-index $I=(i_{1}, \dotsc , i_{n})$, $J=(j_{1}, \dotsc , j_{n})$,
\begin{equation*}
  [\rho(g)]_{IJ} = g_{i_{1} j_{1}} \dotsm g_{i_{n} j_{n}}.
\end{equation*}
Three other examples are provided in~\autoref{sec:explicit-harmonic-decomposition}.
\begin{defn}
  A representation is \emph{irreducible} if there is no stable subspace under $G$ other than $\set{0}$ and $\VV$.
\end{defn}

\begin{defn}
  Two linear representations $(\VV_{1},\rho_{1})$ and $(\VV_{2},\rho_{2})$ of the same group $G$ are \emph{equivalent} if there exists a linear isomorphism $\varphi$ from $\VV_{1}$ to $\VV_{2}$ such that
  \begin{equation*}
    \varphi(\rho_{1}(g)\vv_{1}) = \rho_{2}(g)\varphi(\vv_{1}),
  \end{equation*}
  for all $\vv_{1}\in  \VV_{1}$ and $g\in G$.
\end{defn}

Real irreducible representations of 2D orthogonal groups are well-known (see for instance~\cite{GSS1988}). Each \emph{real irreducible} representation of $\SO(2)$ is either equivalent to the trivial representation on $\RR$, denoted by $\rho_{0}$ and defined by
\begin{equation*}
  \rho_{0}(g)\lambda = \lambda,
\end{equation*}
for all $g \in \SO(2)$ and all $\lambda \in \RR$, or to the two-dimensional representation on $\RR^{2}$ given by
\begin{equation*}
  r_{\theta}\in \SO(2)\mapsto
  \rho_{n}(r_{\theta}) =
  \begin{pmatrix}
    \cos n\theta & -\sin n\theta \\
    \sin n\theta & \cos n\theta
  \end{pmatrix}
  \in \GL(\RR^{2}),
\end{equation*}
and indexed by the integer $n\geq 1$.

Each \emph{real irreducible} representations of $\OO(2)$ is either equivalent to the trivial representation on $\RR$, denoted by $\rho_{0}$, the \emph{sign} representation on $\RR$, denoted by $\rho_{-1}$ and defined by
\begin{equation*}
  \rho_{-1}(g)\xi = (\det g)\xi,
\end{equation*}
for all $g \in \OO(2)$ and all $\xi \in \RR$ ($\xi$ is sometimes called a pseudo-scalar), or to one of the following representations on $\RR^{2}$ given by
\begin{equation*}
  \rho_{n}(r_{\theta}) =
  \begin{pmatrix}
    \cos n\theta & -\sin n\theta \\
    \sin n\theta & \cos n\theta
  \end{pmatrix},
  \quad
  \rho_{n}(\sigma r_{\theta}) =
  \begin{pmatrix}
    \cos n\theta & \sin n\theta   \\
    \sin n\theta & - \cos n\theta
  \end{pmatrix},
\end{equation*}
and indexed by the integer $n\geq 1$.

\begin{rem}
  Besides $\rho_{n}$, one can build a new irreducible representation, called the \emph{twisted} representation (associated in 3D with pseudo-tensors). It is defined by
  \begin{equation*}
    \hat{\rho}_{n}(g) = \det(g)\rho_{n}(g), \qquad g \in \OO(2).
  \end{equation*}
  However, and contrary to what happens in 3D, this new representation $\hat{\rho}_{n}$ is equivalent to $\rho_{n}$ for every $n \ge 1$. Indeed, if we set
  \begin{equation*}
    \varphi : =  r_{\pi/2} : \RR^{2} \to \RR^{2},
  \end{equation*}
  one can check that $\varphi$ is an equivariant isomorphism:
  \begin{equation*}
    \varphi \circ \rho_{n}(r_{\theta}) = \hat{\rho}_{n}(r_{\theta}) \circ \varphi, \quad \text{and} \quad \varphi \circ \rho_{n}(\sigma r_{\theta}) = \hat{\rho}_{n}(\sigma r_{\theta}) \circ \varphi .
  \end{equation*}
  For $n = 0$, we have $\hat{\rho}_{0} = \rho_{-1}$, which is not equivalent to $\rho_{0}$. In other words, there are neither pseudo-vectors and nor pseudo-tensors in 2D but there exists pseudo-scalars.
\end{rem}

There are two models, useful in practice, for $2$-dimensional irreducible representations of the orthogonal groups:
\begin{enumerate}
  \item The spaces $\HP{n}$ of \emph{homogeneous harmonic polynomials} (polynomials with vanishing Laplacian) in two variables $x$, $y$ of degree $n \ge 1$,
  \item The spaces $\HT{n}$ of $n$th-order \emph{harmonic tensors} (totally symmetric tensors with vanishing traces).
\end{enumerate}
And, to complete these alternative models for $n = 0$ and $n = -1$, we set
\begin{equation*}
  \HT{0} = \HP{0} = \RR, \quad \text{with the trivial representation},
\end{equation*}
and
\begin{equation*}
  \HT{-1} = \HP{-1} = \RR, \quad \text{with the sign representation}.
\end{equation*}

Any linear representation $(\VV,\rho)$ of $\SO(2)$ or $\OO(2)$ can be decomposed into a direct sum of irreducible representations. This is known as the \emph{harmonic decomposition} of $\VV$ and means that
\begin{equation}\label{eq:harmonic-decomposition}
  \VV \simeq \HT{n_{1}}\oplus \dotsb \oplus \HT{n_{p}},
\end{equation}
where $n_{i} \in \set{-1,0,1,2, \dotsc}$ and where multiplicities are allowed. An explicit method to achieve such a decomposition for any representation $\VV$ of $\SO(2)$ or $\OO(2)$, based on the infinitesimal action of $\SO(2)$, is described in~\autoref{sec:explicit-harmonic-decomposition}. It extends, somehow, a method used in~\cite{Ver1982,Via1997,Van2005,Van2007} for bidimensional elasticity (see also \cite{Spe1970,Bac1970} and~\cite{Cow1989,Bae1993} for 3D elasticity using different approaches). The harmonic decomposition of the space of totally symmetric tensors is handled in~\autoref{sec:Sym-harmonic-decomposition}.

\begin{exa}\label{ex:dec-harm-4-order}
  The harmonic decomposition of fourth-order tensors under $\OO(2)$ are given below, depending on their index symmetries, as described in~\cite{AKO2016}, and where we have set $\bT=(T_{ijkl})$.
  \begin{align*}
     & T_{ijkl}                     &                                                        & \text{No index symmetry}      &  & 3\HT{-1} \oplus 3\HT{0}\oplus 4\HT{2}\oplus  \HT{4} \\
     & T_{(ij)kl}                   &                                                        & \text{One minor symmetry}
     &                              & 2\HT{-1}  \oplus 2\HT{0} \oplus 3\HT{2}\oplus \HT{4}                                                                                            \\
     & T_{ij | kl}                  &                                                        & \text{Major symmetry}
     &                              & \HT{-1} \oplus 3\HT{0} \oplus 2\HT{2} \oplus \HT{4}                                                                                             \\
     & T_{(ij)(kl)}                 &                                                        & \text{Minor symmetries}
     &                              & \HT{-1}  \oplus 2\HT{0} \oplus 2\HT{2}  \oplus  \HT{4}                                                                                          \\
     & T_{ijkl}= T_{jilk}= T_{klij} &                                                        & \text{Normal Klein sym.}
     &                              & 3\HT{0} \oplus \HT{2}\oplus \HT{4}                                                                                                              \\
     & T_{(ijk)l}                   &                                                        & \text{Tot. sym. over 3 index}
     &                              & \HT{-1}  \oplus \HT{0} \oplus 2\HT{2} \oplus \HT{4}                                                                                             \\
     & T_{(ij)|(kl)}                &                                                        & \text{Elasticity}
     &                              & 2\HT{0}  \oplus \HT{2} \oplus  \HT{4}                                                                                                           \\
     & T_{(ijkl)}                   &                                                        & \text{Totally symmetric}
     &                              & \HT{0}\oplus \HT{2}\oplus \HT{4}
  \end{align*}
\end{exa}

\begin{rem}\label{rem:dec-harm-Ela}
  Once an explicit harmonic decomposition has been fixed, a given tensor is parameterized by scalars, pseudo-scalars and harmonic tensors of order $n \ge 1$ (which depend on two parameters). For instance, in the case of the elasticity tensor, we get
  \begin{equation*}
    \bC = (\lambda, \mu, \bh, \bH)
  \end{equation*}
  where $\lambda, \mu \in \HT{0}$, $\bh\in \HT{2}$, $\bH\in \HT{4}$. The remarkable fact is that all the results we present in \autoref{sec:applications} are independent of this choice. All formulas are valid, independently of the particular choice of an explicit harmonic decomposition.
\end{rem}

\section{Invariant theory in 2D}
\label{sec:invariant-theory}

Let $(\VV,\rho)$ be a linear representation of $G = \OO(2)$ or $G = \SO(2)$. The action of $G$ on $\VV$ induces a linear representation of $G$ on the algebra $\RR[\VV]$ of polynomials functions on $\VV$, which will be denoted by $\star$, and which is given by
\begin{equation}\label{eq:gstarpdev}
  (g\star p)(\vv) :=  p(\rho(g)^{-1}\vv).
\end{equation}

\subsection{Invariant algebra}

The invariant algebra of $\VV$ under the group $G$, denoted by $\inv(\VV, G)$ (and more usually by $\RR[\VV]^G$ in the Mathematical community), is defined as
\begin{equation*}
  \inv(\VV, G) := \set{p\in \RR[\VV],\quad g\star p = p,\, \forall g\in G}.
\end{equation*}
It is a subalgebra of $\RR[\VV]$, which is furthermore \emph{finitely generated}, thanks to Hilbert's theorem~\cite{Hil1993,Stu2008}. Moreover, since the group action on polynomials preserves vector spaces of \emph{homogeneous polynomials of given degrees}, it can always be generated by homogeneous polynomial invariants.

\begin{defn}[Integrity basis]
  A finite set of $G$-invariant homogeneous polynomials $\set{J_{1}, \dotsc , J_{N}}$ over $\VV$ is a \emph{generating set} (also called an \emph{integrity basis}) of the invariant algebra $\inv(\VV, G)$ if any $G$-invariant polynomial $J$ over $\VV$ is a polynomial function in $J_{1},\dotsc,J_{N}$, \textit{i.e} if $J$ can be written as
  \begin{equation*}
    J(\vv) = P(J_{1}(\vv), \dotsc ,J_{N}(\vv)), \qquad \vv \in \VV,
  \end{equation*}
  where $P$ is a polynomial function in $N$ variables. An integrity basis is \emph{minimal} if no proper subset of it is an integrity basis.
\end{defn}

\begin{rem}
  A minimal integrity basis of homogeneous invariants is not unique, several choices are possible but its cardinality, as well as the degree of the generators are independent of the choice of a particular basis~\cite{DL1985/86}.
\end{rem}

\begin{defn}
  An homogeneous polynomial invariant is called \emph{reducible} if it can be written as the product of two (non constant) homogeneous polynomial invariants, or more generally as a sum of products of two (non constant) homogeneous polynomial invariants. Otherwise, it is called \emph{irreducible}.
\end{defn}

\begin{lem}\label{lem:reducible-invariants}
  Let $\mathcal{J} := \set{J_{1}, \dotsc , J_{N}}$ be a set of homogeneous polynomial invariants which generates $\inv(\VV, G)$. If some $J_{r} \in \mathcal{J}$ is \emph{reducible}, then $\mathcal{J} \setminus \set{J_{r}}$ is still a generating set of $\inv(\VV, G)$.
\end{lem}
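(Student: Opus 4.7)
The plan is to show that $J_{r}$ itself can be re-expressed as a polynomial in the remaining generators $\mathcal{J} \setminus \set{J_{r}}$; once this is established, any polynomial expression in $\mathcal{J}$ can be turned into one in $\mathcal{J} \setminus \set{J_{r}}$ by substitution.

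First I would unpack the reducibility assumption: by definition there exist non-constant homogeneous polynomial invariants $A_{k}, B_{k}$ such that
\begin{equation*}
  J_{r} = \sum_{k} A_{k} B_{k}.
\end{equation*}
Since $J_{r}$ is homogeneous of some degree $d_{r} := \deg J_{r}$ and each $A_{k} B_{k}$ is homogeneous of degree $\deg A_{k} + \deg B_{k}$, we may keep only the terms with $\deg A_{k} + \deg B_{k} = d_{r}$. The non-constancy of $A_{k}$ and $B_{k}$ then forces
\begin{equation*}
  1 \le \deg A_{k},\ \deg B_{k} < d_{r}.
\end{equation*}

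The key observation — and really the only one needed — is the following \emph{degree trap}: any homogeneous invariant of degree strictly less than $d_{r}$ automatically lies in the subalgebra generated by $\mathcal{J} \setminus \set{J_{r}}$. Indeed, if $I$ is homogeneous of degree $d < d_{r}$, then by hypothesis $I = P(J_{1}, \dots, J_{N})$, and after extracting its homogeneous-of-degree-$d$ component we may assume every monomial of $P$ has weighted degree $d$. A single occurrence of $J_{r}$ would already contribute degree $d_{r} > d$, so no such monomial can contain $J_{r}$, i.e.\ $I \in \RR[\mathcal{J} \setminus \set{J_{r}}]$. Applying this to each $A_{k}$ and $B_{k}$ yields
\begin{equation*}
  J_{r} = \sum_{k} A_{k} B_{k} \in \RR[\mathcal{J} \setminus \set{J_{r}}].
\end{equation*}

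Finally, given any $J \in \inv(\VV, G)$, write $J = P(J_{1}, \dots, J_{N})$ and substitute the expression obtained above for $J_{r}$ into $P$; this produces a polynomial expression of $J$ in the elements of $\mathcal{J} \setminus \set{J_{r}}$, proving that this subset is still a generating set. I do not expect any real obstacle here: the argument is elementary once the degree trap is noticed, and it crucially uses the fact that reducibility (as defined just above) involves only \emph{non-constant} factors, which is exactly what prevents $A_{k}$ or $B_{k}$ from secretly absorbing another copy of $J_{r}$.
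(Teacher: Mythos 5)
Your proof is correct and follows essentially the same route as the paper: write the reducible $J_{r}$ as a sum of products of lower-degree homogeneous invariants, note that any invariant of degree strictly less than $\deg J_{r}$ can be expressed without $J_{r}$ by the degree argument, and conclude by substitution. Your explicit extraction of the homogeneous component before invoking the degree comparison is just a slightly more careful phrasing of the paper's observation that $P_{k}$ cannot depend on $J_{r}$.
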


\begin{proof}
  Suppose that $J_{r} \in \mathcal{J}$ is reducible. Then, it can be written as a sum of products of two (non constant) homogeneous polynomial invariants.
  \begin{equation*}
    J_{r} = \sum_{p,q} I_{p}I_{q},
  \end{equation*}
  where $\deg ( I_{p}I_{q})= \deg J_{r}$, for each pair $(p,q)$. Thus, for each $k$, $\deg I_{k} < \deg J_{r}$ (because $I_{k}$ is not constant). Besides, each $I_{k}$ writes as
  \begin{equation*}
    I_{k} = P_{k}(J_{1}, \dotsc , J_{N}).
  \end{equation*}
  But $P_{k}$ cannot depends on $J_{r}$ since $\deg I_{k} < \deg J_{r}$. The conclusion follows, since each $I_{k}$, and thus $J_{r}$, can then be rewritten as polynomial functions of the homogeneous invariants in $\mathcal{J} \setminus \set{J_{r}}$.
\end{proof}

\begin{cor}
  A minimal integrity basis constituted of homogeneous invariants contains only irreducible invariants.
\end{cor}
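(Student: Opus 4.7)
The statement is an immediate consequence of the preceding lemma, so the plan is a one-step argument by contrapositive (equivalently, contradiction).

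First, I would unpack the definition of minimality: a minimal integrity basis $\mathcal{J} = \set{J_{1},\dotsc,J_{N}}$ is, by definition, a generating set such that no proper subset $\mathcal{J}\setminus\set{J_{r}}$ is an integrity basis. Next, assume for contradiction that some element $J_{r} \in \mathcal{J}$ is reducible. Then \autoref{lem:reducible-invariants} applies directly and yields that $\mathcal{J}\setminus\set{J_{r}}$ is still a generating set of $\inv(\VV,G)$. This contradicts the minimality of $\mathcal{J}$, hence every $J_{r} \in \mathcal{J}$ must be irreducible.

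There is essentially no obstacle: all the work has already been done inside \autoref{lem:reducible-invariants}, whose proof exploits the fact that a reducible homogeneous invariant can be rewritten as a polynomial in strictly lower-degree invariants of the generating set, so it can be discarded without losing the generation property. The corollary is just the logical restatement of this lemma in terms of minimality, and I would keep the proof to two or three lines.
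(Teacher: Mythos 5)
Your proposal is correct and matches the paper's (implicit) argument: the corollary is stated as an immediate consequence of Lemma~\ref{lem:reducible-invariants}, exactly via the contradiction with minimality that you describe. Nothing further is needed.
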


\subsection{Covariant algebra}

There is a useful extension of the concept of invariant which is called a \emph{covariant}~\cite{KP2000,OKDD2022}. Its definition involves two representations $\rho_{\VV}$ and $\rho_{\WW}$ of $G$ of the same group $G$ (see definition~\ref{def:linear-representation}).

\begin{defn}\label{def:covariants}
  A mapping $\vv \mapsto \bc(\vv)$ from $\VV$ to $\WW$ is a \emph{covariant} of $\vv\in \VV$ of type $\WW$, if
  \begin{equation*}
    \bc(\rho_{\VV}(g)\vv)= \rho_{\WW}(g) \bc(\vv), \qquad \forall g\in G.
  \end{equation*}
  It is called a \emph{polynomial covariant} of type $\WW$ if moreover the mapping $\vv \mapsto \bc(\vv)$ is polynomial in $\vv$.
\end{defn}

\begin{rem}
  In the situations we are usually concerned with in mechanics, $\VV$ and $\WW$ are tensor spaces and the condition $\bc(\rho_{\VV}(g)\vv)= \rho_{\WW}(g) \bc(\vv)$ is written generally as $\bc(g\star\bT)= g\star\bc(\bT)$, where it is understood that the action $\star$ is the usual action of $G$ on tensors. For $G=\SO(2)$, it simply means that the covariant $\bc(\bT)$ of $\bT$ is rotated by $g$ if $\bT$ is rotated by $g$.
\end{rem}

\begin{exa}\label{ex:covC}
  The harmonic components of $\bC\in \Ela$, $\bh$ and $\bH$ (see~remark \ref{rem:dec-harm-Ela}), as well as the symmetric second-order tensor $\bd_2(\bH):=\bH \3dots \bH$ introduced in~\cite{BKO1994}, are polynomial covariants of $\bC$ of respective type $\HT{2}$, $\HT{4}$ and $\Sym^{2}(\RR^2)$, for the actions of both groups $\SO(2)$ and $\OO(2)$.
\end{exa}

The concept of polynomial covariant is particularly useful when restricted to covariants of type $\Sym^{n}(\RR^{2})$ endowed with the tensorial representation
\begin{equation*}
  (\rho_{n}(g)\bS)(\xx_{1},\dotsc ,\xx_{n}) = \bS(g^{-1}\xx_{1},\dotsc ,g^{-1}\xx_{n}), \qquad g \in \OO(2).
\end{equation*}
In that case, to each polynomial covariant $\bc$ of $\vv$ of type $\Sym^{n}(\RR^{2})$, corresponds an homogeneous polynomial $\phi(\bc(\vv))$ of degree $n$ (see~\autoref{sec:tensors}), and
\begin{equation*}
  \rp_{\bc} (\vv, \xx) := \phi(\bc(\vv))(\xx) = \bc(\vv)(\xx, \dotsc, \xx)
\end{equation*}
is a polynomial function of both $\vv$ and $\xx$. Moreover, we get
\begin{align*}
  \rp_{\bc}(\rho_{\VV}(g)\vv, g \xx) & = \bc(\rho_{\VV}(g)\vv)(g \xx, \dotsc, g \xx)
  \\
                                     & = (\rho_{n}(g)\bc(\vv))(g \xx, \dotsc, g \xx)
  \\
                                     & = \bc(\vv)(g^{-1} (g \xx), \dotsc, g^{-1}(g \xx))
  \\
                                     & = \bc(\vv)(\xx, \dotsc, \xx)
  \\
                                     & = \rp_{\bc}(\vv, \xx).
\end{align*}

Therefore, to every covariant $\bc$ of type $\Sym^{n}(\RR^{2})$, corresponds a unique invariant polynomial $\rp_{\bc}$ of $(\vv, \xx)$. In other words, the polynomial covariants of type $\Sym^{n}(\RR^2)$ can be identified with elements of the invariant algebra
\begin{equation*}
  \RR[\VV\oplus \RR^{2}]^{G}.
\end{equation*}
This justifies the following definition.

\begin{defn}
  The \emph{covariant algebra} of $\VV$, \textit{i.e.} the algebra generated by the \emph{polynomial G-covariants} of $\VV$ of type $\Sym^{n}(\RR^2)$, denoted by $\cov{(\VV,G)}$, is defined as
  \begin{equation*}
    \cov{(\VV, G)} := \inv(\VV \oplus \RR^{2}, G),
  \end{equation*}
  where $G$ acts on $\VV \oplus \RR^{2}$ as
  \begin{equation*}
    (\vv,\xx) \mapsto (\rho(g)\vv,g\xx), \qquad v \in \VV,\, \xx \in \RR^{2},\, g \in G.
  \end{equation*}
\end{defn}

\begin{rem}
  Polynomial covariants appear to be much more useful than polynomial invariants to solve many problems in Invariant Theory such as the characterization of symmetry classes for instance~\cite{AR2016,OKDD2022} or the characterization of geometric properties \cite{Stu2008}.
\end{rem}

The covariant algebra $\cov{(\VV, G)}$ is naturally \emph{bi-graded}, by the degree $d$ in $\vv\in \VV$, on one hand, called the \emph{degree} of the covariant, and by the degree $o$ in $\xx = (x,y)\in \RR^{2}$, on second hand, called the \emph{order} of the covariant. The set of covariants of order $0$ is a subalgebra of $\cov{(\VV,G)}$ which corresponds exactly to $\inv(\VV, G)$ (this justifies the fact that the covariant algebra is an extension of the invariant algebra and contains more information). The set of covariants of degree $d$ and order $o$ is a finite dimensional vector subspace of $\cov{(\VV,G)}$ which is denoted by $\cov_{d, o}(\VV, G)$.

\begin{exa}\label{ex:covapoly}
  The following expressions are $\SO(2)$-covariants of $\ba \in \Sym^{2}(\RR^{2})$ of respective type $\Sym^{2}(\RR^{2})$, $\Sym^{2}(\RR^{2})$ and $\Sym^{6}(\RR^{2})$:
  \begin{equation*}
    \ba^{2}, \qquad \id \times \ba, \qquad \ba \odot \ba \odot \ba^{3},
  \end{equation*}
  where the notation $\ba^{n+1} = \ba^{n}\1dot\ba$ has been used. There polynomial counterparts write
  \begin{equation*}
    \ba^{2}(\xx, \xx)=\xx\cdot \ba^{2}\cdot \xx,  \qquad
    (\id \times \ba)(\xx, \xx)=\det(\ba\cdot \xx, \xx) ,
    \qquad (\xx\cdot \ba\cdot\xx)^{2} \,  (\xx\cdot\ba^{3}\cdot \xx),
  \end{equation*}
  and belong respectively to
  \begin{equation*}
    \cov_{2,2}(\Sym^{2}(\RR^{2}), \SO(2)), \qquad \cov_{1,2}(\Sym^{2}(\RR^{2}), \SO(2)), \qquad \cov_{5,6}(\Sym^{2}(\RR^{2}),\SO(2)).
  \end{equation*}
  The first one and the third one are also $\OO(2)$-covariants, but not the second one.
\end{exa}

\section{Computing integrity bases}
\label{sec:integrity-bases}

The approach developed in this section has already been applied to plane elasticity by Vianello~\cite{Via1997,FV2014}, following a work of Pierce~\cite{Pie1994/95}, and in a related way by Verchery some years before~\cite{Ver1982}. Our goal is to explain a systematic way to obtain integrity bases for the invariant algebra $\inv(\VV,G)$ of any linear representation $\VV$ of $G=\SO(2)$ or $G=\OO(2)$, the computation of an integrity basis for $\cov(\VV, G)$ being a particular case, since it is just the invariant algebra of $\VV\oplus\RR^{2}$.

\subsection{$\SO(2)$ invariant algebras}

We will start by studying the case of a representation $\VV$ of the rotation group $\SO(2)$. To compute an integrity basis for $\VV$, the first step is to split $\VV$ into irreducible components:
\begin{equation}\label{eq:SO2_Harm_Decomp}
  \VV \simeq \nu_{0}\HT{0} \oplus \HT{n_{1}} \oplus \dotsb \oplus \HT{n_{r}},  \quad
  \nu_{0}\in \NN, \quad n_{k}\in \NN^{*},
\end{equation}
where some $n_{k}$ may be equal (multiplicities of two-dimensional components $\HT{n_{k}}$ are allowed). An explicit way to accomplish this task is detailed in~\autoref{sec:explicit-harmonic-decomposition}. Using this decomposition, a polynomial on $\VV$ writes
\begin{equation*}
  p(\lambda_{1}, \dotsc , \lambda_{\nu_{0}},a_{1},b_{1},\dotsc,a_{r},b_{r}),
\end{equation*}
where $\lambda_{j}$ belongs to $\HT{0} = \RR$ and $(a_{k}, b_{k})\in \RR^2$ are the components of $\bH_{k} \in \HT{n_{k}}$ in some basis.

\begin{rem}\label{rem:lambda-reduction}
  Since each $\lambda_{j}$ is itself an invariant, every invariant polynomial which contains $\lambda_{j}$, and which is not reduced to it, is necessarily reducible. Our goal being to compute a minimal integrity basis, and thus irreducible invariants of $\VV$, we can thus consider only invariant polynomials which depend on
  \begin{equation*}
    (a_{1},b_{1},\dotsc,a_{r},b_{r}).
  \end{equation*}
  Indeed, a minimal integrity basis for $\VV$ consists of a minimal integrity basis of
  \begin{equation*}
    \HT{n_{1}} \oplus \dotsb \oplus\HT{n_{r}},
  \end{equation*}
  to which we must add $\lambda_{1}, \dotsc , \lambda_{\nu_{0}}$.
\end{rem}

Following Vianello~\cite{Via1997}, let us now introduce the complex variable $z_{k}: = a_{k} + ib_{k}$. Then, any real polynomial in $(a_{1},b_{1},\dotsc,a_{r},b_{r})$ can be recast as
\begin{equation*}
  \sum_{\alpha_{k}, \beta_{k}} c_{\alpha_{1}, \dotsc , \alpha_{r}, \beta_{1}, \dotsc , \beta_{r}} z_{1}^{\alpha_{1}}\dotsm z_{r}^{\alpha_{r}}\zb_{1}^{\beta_{1}}\dotsm \zb_{r}^{\beta_{r}},
  \qquad
  \alpha_{i}, \beta_{i} \in \NN,
\end{equation*}
in which the condition of being real writes
\begin{equation*}
  c_{\beta_{1}, \dotsc , \beta_{r}, \alpha_{1}, \dotsc , \alpha_{r}} = \overline{c_{\alpha_{1}, \dotsc , \alpha_{r}, \beta_{1}, \dotsc , \beta_{r}}},
\end{equation*}
where $\overline{c}$ means the complex conjugate of $c$. The advantage of this choice of variables is that the action of $\SO(2)$ preserves the monomials, since
\begin{equation*}
  \rho(r_{\theta}) z_{k} = e^{in_{k}\theta}z_{k}, \qquad \rho(r_{\theta}) \zb_{k} = e^{-in_{k}\theta}\zb_{k},
\end{equation*}
and thus
\begin{equation*}
  r_{\theta} \star \left(z_{1}^{\alpha_{1}}\dotsm z_{r}^{\alpha_{r}}\zb_{1}^{\beta_{1}}\dotsm \zb_{r}^{\beta_{r}} \right) = e^{i(n_{1}(\alpha_{1} -\beta_{1}) + \dotsb + n_{r}(\alpha_{r} - \beta_{r}))} z_{1}^{\alpha_{1}}\dotsm z_{r}^{\alpha_{r}}\zb_{1}^{\beta_{1}}\dotsm \zb_{r}^{\beta_{r}}.
\end{equation*}
We need, therefore, only to compute invariant monomials.

\begin{lem}\label{lem:Diophantine-equation}
  A monomial
  \begin{equation}\label{eq:defm}
    \bm : =  z_{1}^{\alpha_{1}}\dotsm z_{r}^{\alpha_{r}}\zb_{1}^{\beta_{1}}\dotsm \zb_{r}^{\beta_{r}}
  \end{equation}
  is $\SO(2)$-invariant if and only if $(\alpha_{1}, \dotsc , \alpha_{r}, \beta_{1}, \dotsc , \beta_{r})$ is solution of the \emph{linear Diophantine equation}
  \begin{equation}\label{eq:Diophantine-equation}
    n_{1}\alpha_{1} + \dotsb + n_{r}\alpha_{r} - n_{1}\beta_{1} - \dotsb - n_{r}\beta_{r} = 0.
  \end{equation}
\end{lem}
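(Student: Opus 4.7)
The plan is essentially a one-line application of the action formula for $\SO(2)$ on the complex monomials $z_k, \bar{z}_k$ displayed just before the statement, combined with the standard observation that the characters $\theta \mapsto e^{iN\theta}$ are trivial only when $N = 0$.

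First, I would make the computation of $r_\theta \star \bm$ explicit. By multiplicativity of the group action on the polynomial algebra, and using $\rho(r_\theta) z_k = e^{in_k\theta} z_k$ together with $\rho(r_\theta) \bar{z}_k = e^{-in_k\theta} \bar{z}_k$, we get
\begin{equation*}
  r_\theta \star \bm = e^{i N(\alpha, \beta) \theta} \, \bm,
  \qquad
  N(\alpha, \beta) := \sum_{k=1}^{r} n_k(\alpha_k - \beta_k).
\end{equation*}
This is the displayed identity preceding the lemma, here written with the $\theta$ dependence made explicit.

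Second, since $\bm$ is a non-zero polynomial, $\bm$ is $\SO(2)$-invariant if and only if $e^{iN(\alpha,\beta)\theta} = 1$ for every $\theta \in \RR$. The reverse implication is trivial: if $N(\alpha,\beta) = 0$ the scalar factor equals $1$ for all $\theta$. Conversely, if $e^{iN(\alpha,\beta)\theta} = 1$ for every $\theta$, then differentiating at $\theta = 0$ gives $iN(\alpha,\beta) = 0$, hence $N(\alpha,\beta) = 0$, which is exactly the Diophantine equation~\eqref{eq:Diophantine-equation}. There is no real obstacle here: once the action on monomials has been recorded, the equivalence reduces to the faithfulness of the family of characters $\theta \mapsto e^{iN\theta}$ of $\SO(2) \simeq \RR/2\pi\ZZ$.
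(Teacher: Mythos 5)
Your proposal is correct and follows exactly the route the paper takes: the lemma is an immediate consequence of the displayed action formula on monomials, with invariance reducing to the triviality of the character $\theta \mapsto e^{iN\theta}$, i.e.\ $N(\alpha,\beta)=0$ (you even restore the $\theta$ dependence that is implicit in the paper's display). No gap; nothing further is needed.
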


A solution $(\alpha_{1}, \dotsc , \alpha_{r}, \beta_{1}, \dotsc , \beta_{r})$ of~\eqref{eq:Diophantine-equation} is called \emph{irreducible} if it is not the sum of two non-trivial solutions, and \emph{reducible} otherwise. It was shown by Gordan~\cite{Gor1873} (see also~\cite[Section 6.5]{KR1984} and~\cite[Section 1.4]{Stu2008}) that there is only a finite number of irreducible solutions of~\eqref{eq:Diophantine-equation}. Algorithms to compute these irreducible solutions can be found in~\cite{Kry2006,BI2010}. As one can expect, such minimal solutions lead directly to a minimal integrity basis of $\inv(\VV, \SO(2))$.

\begin{thm}\label{thm:SO2-minimal-integrity-basis}
  Let $(\VV,\rho)$ be a real linear representation of $\SO(2)$ which decomposes as
  \begin{equation}
    \VV \simeq \nu_{0}\HT{0}\oplus \HT{n_{1}} \oplus \dotsb \oplus \HT{n_{r}}, \quad \nu_{0}\in \NN, \quad n_{k}\in \NN^{*}.
  \end{equation}
  Then, a minimal integrity basis of $\inv(\VV, \SO(2))$ consists of the homogeneous invariants
  \begin{equation}\label{eq:SO2-minimal-integrity-basis}
    \lambda_{i}, \quad \abs{z_{k}}^{2}, \quad \Re(\bm_{l}), \quad \Im(\bm_{l}),
  \end{equation}
  where $1 \le i \leq \nu_{0}$, $1 \le k \le r$, and $\bm_{l}$ are the irreducible solutions of~\eqref{eq:Diophantine-equation} such that $\bm_{l} \ne \mathbf{\overline{m}}_{l}$.
\end{thm}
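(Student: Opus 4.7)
The plan is to exploit the diagonalisation of the $\SO(2)$-action in the complex coordinates $z_{k} = a_{k} + i b_{k}$ on each $\HT{n_{k}}$, which turns the determination of the invariant algebra into a combinatorial problem on the lattice of monomial exponents. By Remark~\ref{rem:lambda-reduction}, the scalars $\lambda_{i}$ automatically belong to every minimal integrity basis and decouple from the rest, so I restrict attention to the harmonic summand $\HT{n_{1}} \oplus \dotsb \oplus \HT{n_{r}}$ and prove the statement for the generators $\abs{z_{k}}^{2}$, $\Re(\bm_{l})$, $\Im(\bm_{l})$.

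\textbf{Generation.} Any real invariant $p \in \inv(\VV, \SO(2))$ expands as a $\CC$-linear combination of complex monomials $\bm = z_{1}^{\alpha_{1}} \dotsm \overline{z}_{r}^{\beta_{r}}$. By Lemma~\ref{lem:Diophantine-equation}, only those $\bm$ whose exponents solve~\eqref{eq:Diophantine-equation} can carry a non-zero coefficient, and reality forces each complex $\bm$ to appear together with its conjugate $\overline{\bm}$. Hence $p$ is a real-linear combination of terms $\Re(\bm)$ and $\Im(\bm)$ indexed by invariant monomials $\bm$. Gordan's theorem supplies a factorisation $\bm = \prod_{j} \bm_{l_{j}}^{e_{j}}$ into irreducible invariant monomials, and the expansion of
\begin{equation*}
  \prod_{j}\bigl(\Re(\bm_{l_{j}}) + i\, \Im(\bm_{l_{j}})\bigr)^{e_{j}} = \Re(\bm) + i\, \Im(\bm)
\end{equation*}
into real and imaginary parts writes $\Re(\bm)$ and $\Im(\bm)$ as polynomials in the real and imaginary parts of the irreducible factors. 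A real irreducible solution must satisfy $\alpha_{k}=\beta_{k}$ for all $k$, and irreducibility then forces a single $\alpha_{k}=1$, yielding $\abs{z_{k}}^{2}$; complex irreducible solutions form conjugate pairs $\{\bm_{l}, \overline{\bm}_{l}\}$ which yield the same pair of real generators $(\Re(\bm_{l}), \Im(\bm_{l}))$. This proves that the list~\eqref{eq:SO2-minimal-integrity-basis} generates $\inv(\VV, \SO(2))$.

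\textbf{Minimality.} The $\lambda_{i}$ are linear and the $\abs{z_{k}}^{2}$ are quadratic with no non-constant invariant of strictly smaller degree available, hence irreducible. The delicate point is to show that for each complex irreducible $\bm_{l}$ both $\Re(\bm_{l})$ and $\Im(\bm_{l})$ are irreducible invariants; Lemma~\ref{lem:reducible-invariants} will then prevent any of them from being dropped. I would grade $\CC[\VV]$ by the bi-degree $(\alpha, \beta) \in \NN^{r} \times \NN^{r}$ of exponents in $(z_{k}, \overline{z}_{k})$ and note that the $\SO(2)$-action preserves each bi-degree, so every homogeneous bi-degree component of an invariant polynomial is a scalar multiple of a single monomial whose exponents satisfy~\eqref{eq:Diophantine-equation}. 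Assuming $\Re(\bm_{l}) = \sum_{i} P_{i} Q_{i}$ with $P_{i}, Q_{i}$ non-constant homogeneous invariants of strictly lower degree, extracting the bi-degree of $\bm_{l}$ from both sides produces an identity
\begin{equation*}
  \tfrac{1}{2}\bm_{l} = \sum_{i} c_{i}\, \bm_{i}' \bm_{i}'',
\end{equation*}
where every $\bm_{i}', \bm_{i}''$ is a non-constant invariant monomial (the constant parts of $P_{i}$ or $Q_{i}$ cannot contribute, since $\deg P_{i}, \deg Q_{i} < \deg \bm_{l}$). Any non-zero term then provides a non-trivial splitting of $\bm_{l}$ as a product of two invariant monomials, contradicting the irreducibility of $\bm_{l}$ as a Diophantine solution; $\Im(\bm_{l})$ is treated identically.

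The principal obstacle is this last bi-degree matching: one must ensure that the constant parts of $P_{i}, Q_{i}$ cannot contribute and that the bi-degree projection of an invariant polynomial really picks out a single invariant monomial. Both points follow from the fact that the $\SO(2)$-weight is a linear function of the bi-degree, so invariant monomials span the invariant bi-graded components exactly, and the minimality argument reduces cleanly to Gordan irreducibility.
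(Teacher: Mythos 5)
Your \textbf{generation} argument is sound and is essentially a hands-on version of what the paper obtains by citing \cite[Lemma 1.4.2]{Stu2008}: invariance picks out the monomials solving~\eqref{eq:Diophantine-equation}, reality pairs each $\bm$ with $\overline{\bm}$, and the Gordan factorization of an arbitrary invariant monomial into irreducible ones, expanded through $\prod_j(\Re(\bm_{l_j})+i\,\Im(\bm_{l_j}))^{e_j}$, expresses $\Re(\bm)$, $\Im(\bm)$ in the claimed generators. The identification of the real irreducible solutions with the $\abs{z_k}^2$ is also correct.

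The \textbf{minimality} step, however, contains a genuine logical gap. You prove (correctly, via the bi-degree extraction) that each $\Re(\bm_l)$ and $\Im(\bm_l)$ is \emph{irreducible}, i.e.\ not a sum of products of two non-constant invariants, and then invoke Lemma~\ref{lem:reducible-invariants} to conclude that none of them can be dropped. But that lemma only gives the forward implication (a reducible generator may be removed); its converse is false: in $\RR[x,y]$ with trivial group action, the set $\set{x,\,y,\,x+y}$ consists of irreducible homogeneous elements yet is not a minimal generating set. What minimality actually requires is that no generator be a polynomial in the \emph{others}, equivalently that the generators be linearly independent modulo the space of decomposable invariants. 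Irreducibility only says each generator is nonzero there; it does not exclude, for instance, a relation $\Re(\bm_l)=c\,\Im(\bm_l)+(\text{decomposable})$ with $c\in\RR$, nor relations among distinct generators sharing the same multi-degree. Your bi-degree machinery can close this: writing a putative expression of $\Re(\bm_l)$ as a polynomial in the remaining generators, splitting off its linear part, and extracting the $(\alpha_l,\beta_l)$ bi-homogeneous component kills the decomposable part (by irreducibility of $\bm_l$) and leaves only the contribution of $\Im(\bm_l)$, forcing $\tfrac12\bm_l=\tfrac{c}{2i}\bm_l$ with $c$ real, a contradiction; distinct irreducible solutions have distinct exponent vectors, so no other generator contributes. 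But as written this step is missing. Note that the paper's proof avoids the issue entirely by a different route: it takes minimality of the monomial generators over $\CC$ from Sturmfels, observes that passing to $\set{\Re(\bm_l),\Im(\bm_l)}$ is an invertible linear change of generators, and then remarks that a real polynomial relation expressing one real generator in the others would in particular be a complex relation, contradicting minimality of the generating set of $\CC[z_1,\zb_1,\dotsc,z_r,\zb_r]^{\SO(2)}$.
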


\begin{proof}
  Consider first the algebra of \emph{complex} invariant polynomials
  \begin{equation*}
    \mathcal{A} : =  \CC[z_{1},\zb_{1},\dotsc,z_{r},\zb_{r}]^{\SO(2)}.
  \end{equation*}
  It follows from~\cite[Lemma 1.4.2]{Stu2008}, that a minimal integrity basis for $\mathcal{A}$ is given by
  \begin{equation*}
    z_{k}\zb_{k}, \qquad \bm_{l}, \qquad \overline{\bm}_{l},
  \end{equation*}
  where $1 \le k \le r$ and $\bm_{l}$ are the irreducible solutions of~\eqref{eq:Diophantine-equation} such that $\bm_{l} \ne \mathbf{\overline{m}}_{l}$. Thus, $\mathcal{A}$ is also generated by
  \begin{equation*}
    z_{k}\zb_{k}, \qquad \Re(\bm_{l}) = \frac{1}{2}(\bm_{l} + \overline{\bm}_{l}), \qquad \Im(\bm_{l}) = \frac{1}{2i}(\bm_{l} - \overline{\bm}_{l}),
  \end{equation*}
  which is still minimal. Now every real polynomial in
  \begin{equation*}
    \inv(\VV, \SO(2)) = \RR[a_{1},b_{1}, \dotsc , a_{r},b_{r}]^{\SO(2)}
  \end{equation*}
  is a real polynomial in
  \begin{equation*}
    z_{k}\zb_{k}, \qquad \Re(\bm_{l}) = \frac{1}{2}(\bm_{l} + \overline{\bm}_{l}), \qquad \Im(\bm_{l}) = \frac{1}{2i}(\bm_{l} - \overline{\bm}_{l}).
  \end{equation*}
  Hence, this set is a generating set of $\RR[a_{1},b_{1},\dotsc, a_{r},b_{r}]^{\SO(2)}$ which is also minimal. Otherwise, one of these invariants could be written as a real polynomial in the others and this would contradict the fact that this set is minimal as a generating set of $\mathcal{A}$. As already stated (see remark~\ref{rem:lambda-reduction}), we conclude that
  \begin{equation*}
    \lambda_{i}, \quad \abs{z_{k}}^{2}, \quad \Re(\bm_{l}), \quad \Im(\bm_{l}),
  \end{equation*}
  is a minimal integrity basis of $\inv(\VV,\SO(2))$.
\end{proof}

\begin{exa}\label{ex:ElaSO2}
  A minimal integrity basis for the action of $\SO(2)$ on $\Ela$ has been computed in~\cite{Via1997}, using the harmonic decomposition, and representing an elasticity tensor $\bC=(\lambda, \mu, \bh, \bH)$ (see remark \ref{rem:dec-harm-Ela}) in complex form. The basis writes
  \begin{equation*}
    \lambda, \quad \mu, \quad \abs{z_{2}}^{2}, \quad \abs{z_{4}}^{2}, \quad \Re(z_{2}^{2}\overline{z_{4}}), \quad \Im(z_{2}^{2}\overline{z_{4}}),
  \end{equation*}
  where $z_{2} = h_{11} + i h_{12}$ and $z_{4} = H_{1111} + iH_{1112}$ are the components of $\bH_{2} := \bh\in \HT{2}$ and $\bH_{4} := \bH\in \HT{4}$ in some orthonormal basis of $\RR^{2}$. The tensorial expressions of these invariants are provided in example~\ref{exa:tensors-invariants-elasticity-tensor}.
\end{exa}

\subsection{$\OO(2)$ invariant algebras}

Consider now a representation $\VV$ of the orthogonal group $\OO(2)$, which decomposes as
\begin{equation*}
  \VV \simeq m_{-1}\HT{-1}\oplus m_{0}\HT{0}\oplus \HT{n_{1}} \oplus \dotsb \oplus\HT{n_{r}},
\end{equation*}
where $m_{-1} \ge 0$, $m_{0} \ge 0$ and $n_{i} \ge 1$. An integrity basis of $\inv(\VV,\OO(2))$ will be obtained from one of the invariant algebra of the restriction of the representation of $\OO(2)$ on $\VV$ to its subgroup $\SO(2)$. This integrity basis will \emph{not be minimal in general} and further computations will be necessary to extract from it a minimal integrity basis.

\begin{thm}\label{thm:O2-integrity-basis}
  Let $(\VV,\rho)$ be a real linear representation of $\OO(2)$ which decomposes as
  \begin{equation*}
    \VV \simeq m_{-1}\HT{-1}\oplus m_{0}\HT{0}\oplus \HT{n_{1}} \oplus \dotsb \oplus\HT{n_{r}},
  \end{equation*}
  where $m_{-1},m_{0} \in \NN$ and $n_{k}\in \NN^{*}$. Then, an integrity basis for $\inv(\VV,\OO(2))$ consists of the homogeneous invariants
  \begin{equation}\label{eq:O2-generators}
    \lambda_{k}, \quad \abs{z_{l}}^{2}, \quad \Re(\bm_{l}), \quad \xi_{i}\xi_{j}, \quad \xi_{i}\Im(\bm_{l}), \quad \Im(\bm_{p})\Im(\bm_{q}),
  \end{equation}
  where $\lambda_{k}\in \HT{0}$, $\xi_{i} \in \HT{-1}$ and where $\bm_{l}$ are the irreducible solutions of~\eqref{eq:Diophantine-equation} such that $\bm_{l} \ne \mathbf{\overline{m}}_{l}$ and only remains the terms $\Im(\bm_{p})\Im(\bm_{q})$ for which neither $\bm_{p}\bm_{q}$, nor $\bm_{p}\overline{\bm}_{q}$ contains a factor $z_{s}\overline{z}_{s}$ for some $s\in\set{1,\dotsc,r}$.
\end{thm}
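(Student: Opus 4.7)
The plan is to deduce this from Theorem~\ref{thm:SO2-minimal-integrity-basis} by observing that $\OO(2) = \SO(2) \sqcup \sigma\SO(2)$, so that $\inv(\VV,\OO(2))$ is exactly the $\sigma$-fixed subalgebra of $\inv(\VV,\SO(2))$. Viewed as an $\SO(2)$-representation, both $\HT{0}$ and $\HT{-1}$ are trivial, so $\VV$ carries $\nu_{0} = m_{0} + m_{-1}$ scalar copies. Theorem~\ref{thm:SO2-minimal-integrity-basis} then supplies a minimal $\SO(2)$-integrity basis consisting of the true scalars $\lambda_{k}$, the pseudo-scalars $\xi_{i}$, the norms $\abs{z_{l}}^{2}$, and the real and imaginary parts $\Re(\bm_{l})$, $\Im(\bm_{l})$ of the irreducible Diophantine monomials.

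The second step is to record how $\sigma$ acts on each of these generators. Since $\rho_{-1}(\sigma) = -1$, and since on every $\HT{n}$ with $n \ge 1$ the reflection sends $(a_{l},b_{l}) \mapsto (a_{l}, -b_{l})$, i.e.\ $z_{l} \mapsto \overline{z}_{l}$, the generators $\xi_{i}$ and $\Im(\bm_{l})$ change sign under $\sigma$ (call them \emph{odd}), while $\lambda_{k}$, $\abs{z_{l}}^{2}$ and $\Re(\bm_{l})$ are preserved (\emph{even}). A polynomial in the $\SO(2)$-generators is $\sigma$-invariant, hence $\OO(2)$-invariant, exactly when each of its monomials has even total degree in the odd generators. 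Grouping the odd factors pairwise, every $\OO(2)$-invariant is then a polynomial in the even generators together with the quadratic products $\xi_{i}\xi_{j}$, $\xi_{i}\Im(\bm_{l})$ and $\Im(\bm_{p})\Im(\bm_{q})$, which already proves that~\eqref{eq:O2-generators} is a generating set before the reducibility restriction is imposed.

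To justify the reducibility condition on $\Im(\bm_{p})\Im(\bm_{q})$, I would use the twin identities
\begin{equation*}
  \Im(\bm_{p})\Im(\bm_{q}) = \Re(\bm_{p}\overline{\bm}_{q}) - \Re(\bm_{p})\Re(\bm_{q}) = \Re(\bm_{p})\Re(\bm_{q}) - \Re(\bm_{p}\bm_{q}),
\end{equation*}
obtained by expanding $\bm_{p}\bm_{q}$ and $\bm_{p}\overline{\bm}_{q}$ via $\bm_{l} = \Re(\bm_{l}) + i\Im(\bm_{l})$. If $\bm_{p}\bm_{q}$ carries a factor $z_{s}\overline{z}_{s}$, writing $\bm_{p}\bm_{q} = \abs{z_{s}}^{2}\bm''$ with $\bm''$ an $\SO(2)$-invariant monomial gives $\Re(\bm_{p}\bm_{q}) = \abs{z_{s}}^{2}\Re(\bm'')$, a product of two non-constant $\OO(2)$-invariants; the second identity then expresses $\Im(\bm_{p})\Im(\bm_{q})$ as a sum of such products, and Lemma~\ref{lem:reducible-invariants} lets us drop it from the generating set. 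The symmetric case, where $\bm_{p}\overline{\bm}_{q}$ contains $z_{s}\overline{z}_{s}$, is handled via the first identity.

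The step I expect to be the main obstacle is precisely this reducibility analysis: one has to recognise that $\abs{z_{s}}^{2}$ being real is what allows $\Re(\bm_{p}\bm_{q})$ (resp.\ $\Re(\bm_{p}\overline{\bm}_{q})$) to factor as a product of two $\OO(2)$-invariants, and that the two dual forms of the identity above are exactly what is required to cover both ways a factor of $\abs{z_{s}}^{2}$ can be absorbed. Once this point is settled, the remaining arguments reduce to systematic bookkeeping of $\sigma$-parities, directly on top of Theorem~\ref{thm:SO2-minimal-integrity-basis}.
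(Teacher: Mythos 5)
Your proposal is correct and takes essentially the same route as the paper: restrict to $\SO(2)$ and invoke Theorem~\ref{thm:SO2-minimal-integrity-basis} (with the $\xi_{i}$ counted among the $\SO(2)$-scalars), sort the generators by their $\sigma$-parity, keep only even products, and then discard $\Im(\bm_{p})\Im(\bm_{q})$ whenever $\bm_{p}\bm_{q}$ or $\bm_{p}\overline{\bm}_{q}$ contains a factor $z_{s}\overline{z}_{s}$, using the identity $\Im(\bm_{p})\Im(\bm_{q})=\Re(\bm_{p})\Re(\bm_{q})-\Re(\bm_{p}\bm_{q})=\Re(\bm_{p}\overline{\bm}_{q})-\Re(\bm_{p})\Re(\bm_{q})$ together with Lemma~\ref{lem:reducible-invariants}. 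The only difference is cosmetic: the paper makes the parity step explicit through the Reynolds operator, writing $\rp=\tfrac{1}{2}(\rp+\sigma\star\rp)$ before expanding $P$, which is precisely the averaging your ``exactly when'' claim implicitly needs (a given polynomial expression of an invariant in the generators need not have only even monomials, because of syzygies, but its $\sigma$-average does).
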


The proof of theorem~\ref{thm:O2-integrity-basis} requires a useful tool in invariant theory called the \emph{Reynolds operator}, which is defined as follows.

\begin{defn}
  Given a compact group $G$ and a linear representation $\VV$ of $G$, the \emph{Reynolds operator} is the linear projector from $\RR[\VV]$ onto the invariant algebra $\inv(\VV,G)$, defined as
  \begin{equation}\label{eq:Reynolds-projector}
    R_{G}(\rp) := \int_{G} (g \star \rp)\, \mathrm{d} \mu, \qquad \rp \in \RR[\VV],
  \end{equation}
  where $\mathrm{d} \mu$ is the Haar measure on $G$.
\end{defn}

\begin{defn}\label{def:Haar-measure}
  Given a compact group $G$, the \emph{Haar measure} is a (bi-invariant) probability measure on $G$ and is uniquely defined~\cite{Ste1994}. For $G = \SO(2)$, it writes as
  \begin{equation*}
    \int_{\SO(2)} f(g)\, \mathrm{d} \mu = \frac{1}{2\pi}\int_{0}^{2\pi} f(r_{\theta})\,\mathrm{d}\theta ,
  \end{equation*}
  for every continuous function $f$ on $\SO(2)$, whereas for $G = \OO(2)$, it writes as
  \begin{equation*}
    \int_{\OO(2)} f(g)\, \mathrm{d} \mu = \frac{1}{4\pi}\int_{0}^{2\pi} f(r_{\theta})\,\mathrm{d}\theta + \frac{1}{4\pi}\int_{0}^{2\pi} f(\sigma r_{\theta})\,\mathrm{d}\theta ,
  \end{equation*}
  for every continuous function $f$ on $\OO(2)$.
\end{defn}

\begin{proof}[Proof of theorem~\ref{thm:O2-integrity-basis}]
  Consider an $\OO(2)$-invariant polynomial $\rp$. It is obviously invariant under $\SO(2)$ and we get thus
  \begin{equation}\label{eq:RO2p}
    \rp = R_{\OO(2)}(\rp) = \frac{1}{2} \left(R_{\SO(2)}(\rp) + \sigma \star R_{\SO(2)}(\rp)\right) = \frac{1}{2} (\rp + \sigma \star \rp).
  \end{equation}
  Now as an element of $\inv(\VV,\SO(2))$ and using theorem~\ref{thm:SO2-minimal-integrity-basis}, $\rp$ can be written as a polynomial expression
  \begin{equation*}
    \rp = P(\lambda_{k}, \xi_{i}, \abs{z_{l}}^{2}, \Re(\bm_{l}), \Im(\bm_{l})),
  \end{equation*}
  and we have moreover
  \begin{equation*}
    \sigma \star \lambda_{k} = \lambda_{k}, \qquad \sigma \star \xi_{i} = - \xi_{i},
  \end{equation*}
  and
  \begin{equation*}
    \sigma \star \abs{z_{l}}^{2} = \abs{z_{l}}^{2}, \quad \sigma \star \Re(\bm_{l}) = \Re(\bm_{l}), \quad \sigma \star \Im(\bm_{l}) = - \Im(\bm_{l}).
  \end{equation*}
  We get thus
  \begin{equation*}
    \sigma \star  \rp=P(\lambda_{k}, -\xi_{i}, \abs{z_{l}}^{2}, \Re(\bm_{l}), -\Im(\bm_{l})).
  \end{equation*}
  Now, using~\eqref{eq:RO2p}, we have
  \begin{equation*}
    \rp = \frac{1}{2} \left(P(\lambda_{k}, \xi_{i}, \abs{z_{l}}^{2}, \Re(\bm_{l}), \Im(\bm_{l})) + P(\lambda_{k}, -\xi_{i}, \abs{z_{l}}^{2}, \Re(\bm_{l}), -\Im(\bm_{l}))\right),
  \end{equation*}
  and expanding $P$, we deduce thereby that $\inv(\VV,\OO(2))$ is generated by the homogeneous invariants
  \begin{equation*}
    \lambda_{k}, \quad \abs{z_{l}}^{2}, \quad \Re(\bm_{l}), \quad \xi_{i}\xi_{j}, \quad \xi_{i}\Im(\bm_{l}), \quad \Im(\bm_{p})\Im(\bm_{q}).
  \end{equation*}
  Note however that
  \begin{align*}
    \Im(\bm_{p})\Im(\bm_{q}) & = \Re(\bm_{p})\Re(\bm_{q}) - \Re(\bm_{p}\bm_{q})                        \\
                             & = \Re(\bm_{p}\overline{\bm}_{q}) - \Re(\bm_{p})\Re(\overline{\bm}_{q}).
  \end{align*}
  Hence, we can remove $\Im(\bm_{p})\Im(\bm_{q})$ from the list of generators, each time $\bm_{p}\bm_{q}$ or $\bm_{p}\overline{\bm}_{q}$ can be recast as
  \begin{equation*}
    (z_{s}\overline{z}_{s})\bm,
  \end{equation*}
  for some $s \in \set{1, \dotsc , r}$ and $\bm$ is a monomial which satisfies~\eqref{eq:Diophantine-equation}. Indeed, then $\Im(\bm_{p})\Im(\bm_{q})$ is reducible and can be removed from the set of generators by lemma~\ref{lem:reducible-invariants}. This applies, in particular, to each invariant $(\Im(\bm_{l}))^{2}$.
\end{proof}

The elimination of $\Im(\bm_{p})\Im(\bm_{q})$, each time $\bm_{p}\bm_{q}$ or $\bm_{p}\overline{\bm}_{q}$ contains a factor $z_{s}\overline{z}_{s}$ in the list of generators, does not lead, in general, to a minimal basis, even if it reduces \textit{a priori} the number of generators, sometimes drastically.

\begin{rem}
  For those of you who have been involved in similar calculations, the problem of whether such invariants as products $\Im\bm_{p}\Im\bm_{q}$ could always be removed \textit{a priori} from a minimal basis of $\inv(\VV,\OO(2))$ founds here a definitive answer. Indeed, there are examples  in~\autoref{sec:applications} where such products cannot be removed (even in the case of totally symmetric tensors, see~\autoref{subsec:piezoelectricity-law}, for instance).
\end{rem}

A reduction procedure, which we call \emph{cleaning} and described in~\autoref{sec:algorithm} is thus required to obtain a minimal integrity basis or to check that a given basis is already minimal. In practice, and the argument will be used when applying the cleaning procedure, it is enough to reduce the integrity basis
\begin{equation}\label{eq:B}
  \mathcal{B} := \set{\abs{z_{k}}^{2}, \Re(\bm_{l}),\Im(\bm_{i})\Im(\bm_{j})}
\end{equation}
of $\inv(\HT{n_{1}} \oplus \dotsb \oplus\HT{n_{r}}, \OO(2))$, to obtain a minimal integrity basis of the full space
\begin{equation*}
  m_{-1}\HT{-1}\oplus m_{0}\HT{0}\oplus \HT{n_{1}} \oplus \dotsb \oplus\HT{n_{r}}.
\end{equation*}
The argument is formalized as the following theorem.

\begin{thm}\label{thm:important-reduction}
  Let $\mathcal{MB}$ be a minimal integrity basis of
  \begin{equation*}
    \inv(\HT{n_{1}} \oplus \dotsb \oplus\HT{n_{r}}, \OO(2)),
  \end{equation*}
  extracted from $\mathcal{B}$~\eqref{eq:B}. Then, a minimal integrity basis for
  \begin{equation*}
    \inv(m_{-1}\HT{-1}\oplus m_{0}\HT{0}\oplus \HT{n_{1}} \oplus \dotsb \oplus\HT{n_{r}}, \OO(2)),
  \end{equation*}
  is given by
  \begin{equation*}
    \mathcal{MB} \cup \set{\lambda_{k}, \xi_{i}\xi_{j},\xi_{i}\Im(\bm_{l})},
  \end{equation*}
  where $0 \le i,j \leq m_{-1}$, $0 \le k \leq m_{0}$, and $\bm_{l}$ are the irreducible solutions of~\eqref{eq:Diophantine-equation} such that $\bm_{l} \ne \mathbf{\overline{m}}_{l}$
\end{thm}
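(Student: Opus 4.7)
The plan is to verify the generating property directly from \autoref{thm:O2-integrity-basis} and then establish minimality by treating the four families of generators separately using a multigrading argument; the work is concentrated on the family $\xi_{i}\Im(\bm_{l})$, which is the only non-routine case.

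\emph{Generating property.} By \autoref{thm:O2-integrity-basis}, $\inv(\VV,\OO(2))$ is generated by the families $\lambda_{k}$, $\abs{z_{l}}^{2}$, $\Re(\bm_{l})$, $\xi_{i}\xi_{j}$, $\xi_{i}\Im(\bm_{l})$, $\Im(\bm_{p})\Im(\bm_{q})$. Since $\abs{z_{l}}^{2}$, $\Re(\bm_{l})$ and $\Im(\bm_{p})\Im(\bm_{q})$ all lie in $\inv(\HT{n_{1}}\oplus\dotsb\oplus\HT{n_{r}},\OO(2))$, which by hypothesis is generated by $\mathcal{MB}$, replacing these three families by $\mathcal{MB}$ preserves generation.

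\emph{Routine cases of minimality.} The $\OO(2)$-action preserves the tri-grading of $\RR[\VV]$ by $(d_{\lambda},d_{\xi},d_{z})$, the total degrees in the $\lambda$-, $\xi$- and $(z_{l},\zb_{l})$-variables, so any reducibility relation $J=\sum_{p} I_{p,1}I_{p,2}$ with non-constant $\OO(2)$-invariant factors must respect this multigrading. Each $\lambda_{k}$ has total degree $1$, so it cannot equal a product of two degree-$\geq 1$ invariants. An element $J\in\mathcal{MB}$ has tri-degree $(0,0,d)$; any reducibility would force both factors into $R_{+} := \inv(\HT{n_{1}}\oplus\dotsb\oplus\HT{n_{r}},\OO(2))$ and contradict the minimality of $\mathcal{MB}$ in $R_{+}$. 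Finally, $\xi_{i}\xi_{j}$ of tri-degree $(0,2,0)$ cannot factor, because $\OO(2)$-invariants in tri-degree $(0,1,0)$ vanish (the $\xi$'s transform by the sign representation) and a $(0,0,0)$-factor would have to be constant.

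\emph{The critical case $J = \xi_{i}\Im(\bm_{l})$.} Its tri-degree is $(0,1,\deg\bm_{l})$, so any factorization splits $d_{\xi}$ as $(1,0)$. The $d_{\xi}=1$ factor has the form $\sum_{k}\xi_{k}f_{k}$ with $f_{k}$ lying in the $\sigma$-antiinvariant part $R_{-}$ of $R := \RR[z_{1},\zb_{1},\dotsc,z_{r},\zb_{r}]^{\SO(2)}$, while the non-constant $d_{\xi}=0$ factor lies in the augmentation ideal $R_{+}^{+}$ of $R_{+}$. Extracting the coefficient of $\xi_{i}$ reduces the irreducibility of $\xi_{i}\Im(\bm_{l})$ to the assertion $\Im(\bm_{l}) \notin R_{-}\cdot R_{+}^{+}$ for any irreducible non-self-conjugate monomial $\bm_{l}$; establishing this non-containment is the main obstacle of the proof.

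\emph{Closing via the semigroup ring.} To obtain the last assertion, I would complexify and identify $R\otimes\CC$ with the semigroup ring $\CC[\Lambda]$, where $\Lambda := \set{(\alpha,\beta)\in\NN^{2r}: n_{1}(\alpha_{1}-\beta_{1}) + \dotsb + n_{r}(\alpha_{r}-\beta_{r}) = 0}$, writing $\bm_{H}$ for the monomial of weight $H\in\Lambda$; the $\sigma$-involution then acts as $H = (\alpha,\beta)\mapsto \bar H := (\beta,\alpha)$, and the weight $H_{l}$ of an irreducible $\bm_{l}$ is by definition indecomposable in $\Lambda$. Expanding a typical generator of $R_{-}\cdot R_{+}^{+}$,
\begin{equation*}
  (\bm_{H}-\bm_{\bar H})(\bm_{H'}+\bm_{\bar{H'}}) = \bm_{H+H'} + \bm_{H+\bar{H'}} - \bm_{\bar H + H'} - \bm_{\bar H + \bar{H'}}
\end{equation*}
(and analogously when $H' = \bar{H'}$), one sees that the coefficient of $\bm_{H_{l}}$ is non-zero only if $H_{l}$ or $\bar H_{l}$ equals a sum of two non-zero elements of $\Lambda$, contradicting the indecomposability of $H_{l}$. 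Hence every element of $R_{-}R_{+}^{+}$ has zero coefficient on $\bm_{H_{l}}$, whereas $\Im(\bm_{l}) = (1/2i)(\bm_{H_{l}}-\bm_{\bar H_{l}})$ has coefficient $1/(2i) \neq 0$ there, yielding the required contradiction and closing the minimality proof.
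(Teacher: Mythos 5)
Your proposal is correct in its main thrust, and for the critical family $\xi_{i}\Im(\bm_{l})$ it takes a genuinely different route from the paper. The paper disposes of this case in one stroke: extracting the coefficient of $\xi_{i}$ from a hypothetical expression of $\xi_{i}\Im(\bm_{l})$ in the remaining generators shows that $\Im(\bm_{l})$ would be a polynomial in $\abs{z_{k}}^{2}$, $\Re(\bm_{j})$ and $\Im(\bm_{j})$, $j\neq l$, contradicting the minimality of the $\SO(2)$-basis already established in theorem~\ref{thm:SO2-minimal-integrity-basis}. You instead re-prove the needed non-membership from scratch, identifying the complexified $\SO(2)$-invariant ring with the semigroup ring $\CC[\Lambda]$ and reading off the coefficient of $\bm_{H_{l}}$, which vanishes on $R_{-}\cdot R_{+}^{+}$ by indecomposability of $H_{l}$. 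That computation is correct and buys self-containedness (you do not lean on Sturmfels' lemma through theorem~\ref{thm:SO2-minimal-integrity-basis}), at the price of partially redoing its proof; the structural steps (generation from theorem~\ref{thm:O2-integrity-basis}, the grading argument for $\lambda_{k}$, $\xi_{i}\xi_{j}$ and the $\mathcal{MB}$-part) coincide with the paper's.

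There is, however, one logical slippage to repair: throughout you only establish that each added generator is \emph{irreducible}, i.e.\ not a sum of products of two non-constant invariants, whereas minimality of a generating set requires that no generator be a \emph{polynomial in the others}; in a fixed homogeneous component this also includes linear combinations of other generators of the same degree plus reducible terms. Under your coarse grading $(d_{\lambda},d_{\xi},d_{z})$, the component of $\xi_{i}\Im(\bm_{l})$ contains all the generators $\xi_{a}\Im(\bm_{m})$ with $\deg\bm_{m}=\deg\bm_{l}$, so ruling out factorizations does not, as stated, rule out a relation of the form $\xi_{i}\Im(\bm_{l})=\sum c_{a,m}\,\xi_{a}\Im(\bm_{m})+(\text{sums of products of non-constant invariants})$. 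The fix costs one line with your own tools: extract the coefficient of $\xi_{i}$ (which kills the terms with $a\neq i$), then the coefficient of the monomial $\bm_{H_{l}}$ in $\CC[\Lambda]$; since the representatives $\bm_{m}$ are chosen one per conjugate pair, $H_{m}\neq H_{l}$ and $\overline{H}_{m}\neq H_{l}$ for $m\neq l$, so every $\Im(\bm_{m})$, like every element of $R_{-}\cdot R_{+}^{+}$, has zero coefficient on $\bm_{H_{l}}$, while $\Im(\bm_{l})$ has coefficient $1/(2i)$. (Alternatively, pass to the fine multi-grading by the degrees in each $\HT{n_{k}}$, as the paper does: one checks that an irreducible monomial with $\bm\neq\overline{\bm}$ contains no factor $z_{s}\overline{z}_{s}$, hence is determined up to conjugation by its multi-degree, so the multi-degree of $\xi_{i}\Im(\bm_{l})$ contains no other generator and irreducibility there does suffice.) The analogous remark for $\lambda_{k}$ and $\xi_{i}\xi_{j}$ is settled by linear independence of distinct monomials, and for elements of $\mathcal{MB}$ by the minimality of $\mathcal{MB}$ itself, as you essentially note.
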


\begin{proof}
  The invariant algebra $\inv(\VV,\OO(2))$ is multi-graded; each invariant writes uniquely as a sum of invariants which are multi-homogeneous relatively to the decomposition
  \begin{equation*}
    \VV \simeq m_{-1}\HT{-1}\oplus m_{0}\HT{0}\oplus \HT{n_{1}} \oplus \dotsb \oplus\HT{n_{r}}.
  \end{equation*}
  In other words,
  \begin{equation*}
    \inv(\VV,\OO(2)) = \bigoplus_{K} \inv_{K}(\VV,\OO(2)),
  \end{equation*}
  where
  \begin{equation*}
    K := (s_{1}, \dotsc s_{m_{-1}}, e_{1}, \dotsc e_{m_{0}}, k_{1}, \dotsc, k_{r}),
  \end{equation*}
  is a multi-index in which $s_{i}$ indicates the degree in $\xi_{i}$, $e_{k}$ indicates the degree in $\lambda_{k}$, $k_{i}$ indicates the degree in $\bH_{n_i}$ and $\inv_{K}(\VV)$ is the vector space of multi-homogeneous invariants of multi-degree $K$. Any relation among multi-homogeneous invariants happens in one vector space $\inv_{K}(\VV,\OO(2))$. Thus, neither $\lambda_{k}$, nor $\xi_{i}\xi_{j}$, can be recast using other invariants from the set~\eqref{eq:O2-generators}. This is because the spaces $\inv_{K}(\VV,\OO(2))$ which contains either $\lambda_{k}$ or $\xi_{i}\xi_{j}$ are one-dimensional. This is also true for $\xi_{i}\Im(\bm_{l})$, not because the corresponding space $\inv_{K}(\VV,\OO(2))$, to which it belongs is one-dimensional, but because if it could be recast using other invariants from the set~\eqref{eq:O2-generators}, then $\Im(\bm_{l})$ could be re-written using $\abs{z_{k}}^{2}$, $\Re(\bm_{i})$ and $\Im(\bm_{j})$ ($j \ne l$), which would lead to a contradiction.
\end{proof}

\begin{exa}\label{ex:ElaO2}
  A minimal integrity basis for the action of $\OO(2)$ on $\Ela$ has been computed in~\cite{Via1997}. It consists in the following five invariants
  \begin{equation*}
    \lambda, \quad \mu, \quad z_{2}\overline{z_{2}}, \quad z_{4}\overline{z_{4}}, \quad \Re(z_{2}^{2}\overline{z_{4}}),
  \end{equation*}
  where $z_{2} = h_{11} + i h_{12}$ and $z_{4} = H_{1111} + iH_{1112}$. The tensorial expressions of these invariants are provided in example~\ref{exa:tensors-invariants-elasticity-tensor}.
\end{exa}

We finally formulate as a theorem another reduction result, which avoids useless computations.

\begin{thm}\label{thm:subspace-reduction}
  Let $\VV = \HT{n_{1}} \oplus \dotsb \oplus\HT{n_{p}}$, where $n_{k} \ge -1$. Then, any stable subspace $\WW$ of $\VV$ writes
  \begin{equation*}
    \WW = \HT{n_{k_{1}}} \oplus \dotsb \oplus\HT{n_{k_{p}}},
  \end{equation*}
  where $\set{n_{k_{1}}, \dotsc , n_{k_{p}}}$ is a subset of $\set{n_{1}, \dotsc , n_{p}}$. Moreover, given any minimal integrity basis $\mathcal{MB}$ of $\inv(\VV,\OO(2))$, which consists only of \emph{multi-homogenous} invariants, a \emph{minimal integrity basis} of $\inv(\WW,\OO(2))$ is obtained by extracting, from $\mathcal{MB}$, multi-homogenous invariants which depend only on the variables
  \begin{equation*}
    \bH_{n_{k_{1}}}, \dotsc , \bH_{n_{k_{p}}}.
  \end{equation*}
\end{thm}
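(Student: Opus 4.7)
The plan is to split the statement into its two independent claims: first, the structural decomposition of the stable subspace $\WW$; second, the extraction principle for a minimal integrity basis. Both claims rest on standard representation-theoretic machinery combined with the multi-grading argument already used in the proof of Theorem~\ref{thm:important-reduction}.

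For the structural part, I would invoke complete reducibility of representations of the compact group $\OO(2)$: any stable subspace $\WW \subset \VV$ admits a stable complement, hence is itself a direct sum of irreducibles. Each irreducible summand of $\WW$ embeds $\OO(2)$-equivariantly into $\VV$, so Schur's lemma forces it to be isomorphic to one of the $\HT{n_{k}}$ appearing in the decomposition of $\VV$ (with possible repetitions when $\VV$ contains isotypic multiplicities). Since both $\inv(\WW, \OO(2))$ and the claimed extraction procedure depend only on the isomorphism class of $\WW$, we may assume $\WW = \HT{n_{k_{1}}} \oplus \dotsb \oplus \HT{n_{k_{p}}}$ sits as a coordinate sub-sum inside $\VV$.

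For the generation claim, let $\mathcal{MB}'\subset \mathcal{MB}$ denote the subset of generators whose multi-degree is zero in every factor $\HT{n_{j}}$ with $j\notin \set{k_{1},\dotsc,k_{p}}$; equivalently, those generators that depend only on $\bH_{n_{k_{1}}}, \dotsc, \bH_{n_{k_{p}}}$. Pick $J \in \inv(\WW, \OO(2))$ and extend it to a polynomial $\widetilde{J}$ on $\VV$ by letting it be independent of the complementary factors; the extension is manifestly $\OO(2)$-invariant, so $\widetilde{J} = P(J_{1}, \dotsc, J_{N})$ for some polynomial $P$ in the generators of $\mathcal{MB}$. Grouping the monomials in this expansion by their multi-degree, each multi-homogeneous component must reproduce the corresponding component of $\widetilde{J}$ on its own. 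Since $\widetilde{J}$ has zero multi-degree in every $\HT{n_{j}}$ with $j\notin\set{k_{1},\dotsc,k_{p}}$, only monomials whose total multi-degree in those factors vanishes can survive. Multi-homogeneity of each $J_{i}$ then forces each surviving monomial to involve only generators from $\mathcal{MB}'$, so $J$ is expressible as a polynomial in $\mathcal{MB}'$.

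For minimality, I argue by contradiction. Suppose some $J_{s}\in \mathcal{MB}'$ were reducible in $\inv(\WW,\OO(2))$; then by Lemma~\ref{lem:reducible-invariants}, $J_{s}$ would be a polynomial expression in the remaining elements of $\mathcal{MB}'$. Because every element of $\mathcal{MB}'$ is simultaneously an element of $\mathcal{MB}$ and the relation on $\WW$ lifts verbatim to $\VV$ (both sides being multi-homogeneous polynomials on $\VV$ that agree on the subspace $\WW$ and are independent of the complementary coordinates), this same identity would hold in $\inv(\VV,\OO(2))$, contradicting the minimality of $\mathcal{MB}$. Thus $\mathcal{MB}'$ is minimal. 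The only delicate point I foresee is the bookkeeping in the presence of isotypic multiplicities, where the meaning of \textit{extracting generators depending only on $\bH_{n_{k_{1}}},\dotsc,\bH_{n_{k_{p}}}$} must be made unambiguous by fixing, once and for all, a labelling of the isomorphic copies of each $\HT{n_{k}}$ in the harmonic decomposition of $\VV$, so that the multi-grading is well defined.
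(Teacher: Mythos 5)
Your proof is correct and follows essentially the same route as the paper: extend an invariant of $\WW$ by zero to $\VV$, express it in the multi-homogeneous generators of $\mathcal{MB}$, and use multi-homogeneity so that every generator involving a complementary factor $\HT{n_{j}}$ vanishes on (or drops out over) $\WW$. You are in fact somewhat more careful than the paper's own proof, which only argues generation: your Schur's-lemma treatment of the isotypic-multiplicity issue in the first assertion and your explicit minimality step (lifting a relation among elements of $\mathcal{MB}'$ verbatim to $\VV$, contradicting minimality of $\mathcal{MB}$) are welcome additions; just note that the minimality argument should start from the assumption that some $J_{s}$ is expressible in the remaining elements of $\mathcal{MB}'$ (the negation of minimality), rather than from reducibility via Lemma~\ref{lem:reducible-invariants}.
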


\begin{proof}
  Let $\WW$ be a stable subspace of $\VV$. Since each $\HT{n}$ is an irreducible representation, each stable subspace $\WW \cap \HT{n}$ of $\HT{n}$ is either $\set{0}$ or $\HT{n}$. This proves the first assertion. Consider now a \emph{multi-homogenous invariant} $J$ on $\VV$, then evaluated on $\WW$ it either vanishes if it depends on more variables than $\bH_{n_{k_{1}}}, \dotsc , \bH_{n_{k_{p}}}$ or is equal to itself overwise. Finally, given $I \in \inv(\WW,\OO(2))$, observe that it extends naturally as a element of $\inv(\VV,\OO(2))$ by setting variables other than $\bH_{n_{k_{1}}}, \dotsc , \bH_{n_{k_{p}}}$ to $0$. It can thus be recast as a polynomial in the multi-homogeneous invariants of $\mathcal{MB}$, but when evaluated on $\WW$, each term of $\mathcal{MB}$ which depends on more variables than $\bH_{n_{k_{1}}}, \dotsc , \bH_{n_{k_{p}}}$ vanishes, which concludes the proof.
\end{proof}

\begin{rem}\label{rem:Snmoins2k}
  Theorem~\ref{thm:subspace-reduction} applies, in particular to any stable subspace $\WW$ of $\VV=\TT^{n}(\RR^{2})$, defined by some index symmetries, and thus in particular to $\WW = \Sym^{n}(\RR^{2})$. It applies also to the space $\WW=\Sym^{n-2k}(\RR^{2})$, which can be considered as a subspace of $\VV=\Sym^{n}(\RR^{2})$, because there is a natural and equivariant linear embedding
  \begin{equation*}
    \Sym^{n-2k}(\RR^{2}) \to \Sym^{n}(\RR^{2}), \qquad \bS \mapsto \underbrace{\id\odot \dotsm \odot \id}_\textrm{$k$ copies} \odot \bS .
  \end{equation*}
\end{rem}

\section{Cleaning algorithm}
\label{sec:algorithm}

Starting from {a finite generating set $\mathcal{B}$~\eqref{eq:B} of the invariant algebra
\begin{equation}\label{eq:def-A}
  \mathcal{A} := \inv(\HT{n_{1}} \oplus \dotsb \oplus\HT{n_{r}}, \OO(2)),
  \qquad
  n_{k_{i}}\geq 1,
\end{equation}
the \emph{cleaning algorithm} produces a minimal integrity basis $\mathcal{MB}$ extracted from $\mathcal{B}$. The invariant algebra $\mathcal{A}$ is multi-graded by the fact that each polynomial invariant can be uniquely decomposed into a sum of polynomial invariants which are homogeneous into each factor (multiplicities allowed)
\begin{equation*}
  \HT{n_{1}}, \dotsc, \HT{n_{r}},
\end{equation*}
of respective degrees $k_{1},k_{2}, \dotsc , k_{r}$. This information will be encoded into the \emph{multi-index}
\begin{equation*}
  K := (k_{1},k_{2}, \dotsc , k_{r}).
\end{equation*}
Therefore, the invariant algebra $\mathcal{A}$ can be decomposed as the direct sum
\begin{equation*}
  \mathcal{A} = \bigoplus_{K} \mathcal{A}_{K},
\end{equation*}
where each $\mathcal{A}_{K}$ is the finite dimensional subspace of $\mathcal{A}$ consisting of multi-homogeneous invariants of multi-degree $K$. The remarkable fact is that the} dimension $a_{K}$ of $\mathcal{A}_{K}$ can be computed \textit{a priori} using the \emph{Hilbert series} of $\mathcal{A}$ (see~\autoref{sec:Hilbert-series}), which writes, by theorem \ref{thm:O2-Hilbert-series} and remark \ref{rem:betak},
\begin{equation*}
  H(t_{1},\dotsc,t_{r}) = \sum_{K} a_{K} t_{1}^{k_{1}}\dotsm t_{r}^{k_r},
  \qquad
  a_{K} = \frac{1}{2}(b_{K}+\beta_{K}),
\end{equation*}
where $\beta_{K}=0$ each time one of the $k_{i}$ is odd and $\beta_{K}=1$ otherwise, and where $b_{K}$ is the number of solutions $(\alpha_{1}, \dotsc, \alpha_{r})$ of the linear Diophantine equation
\begin{equation}\label{eq:diopheqtext}
  2\alpha_{1}n_{1} + \dotsb + 2\alpha_{r}n_{r} = k_{1}n_{1} + \dotsb + k_{r}n_{r},\quad \alpha_{i}\geq 0 .
\end{equation}
Let now $\mathcal{B}_{K} := \mathcal{B} \cap \mathcal{A}_{K}$ be the subset of $\mathcal{B}$ of homogeneous polynomials with multi-degree $K = (k_{1},k_{2}, \dotsc , k_{r})$. Choosing a total order $\preceq$ on the set of multi-index $K$, leads to a partitioning of $\mathcal{B}$ as
\begin{equation*}
  \mathcal{B} = \bigsqcup_{i=0}^{N} \mathcal{B}_{K_{i}}, \quad \text{where} \quad K_{i} \prec K_{j}, \quad \text{if} \quad i<j.
\end{equation*}

\begin{rem}
  Any finite set $\mathcal{S}$ of $p$ homogeneous polynomials in $\mathcal{A}_{K}$ is a family of \emph{vectors} $\vv_{1}, \dotsc,\vv_p$ in the finite dimensional space $\mathcal{A}_{K}$ of dimension $a_{K}$ and we can thus define its rank, $\rg(\mathcal{S})$.
\end{rem}

The proposed cleaning algorithm with
\begin{itemize}
  \item inputs: $\mathcal{B}_{K_{i}}$, $a_{K_{i}}$, with $K_{i} \prec K_{j}$, for all $i<j$,
  \item output : $\mathcal{MB}$,
\end{itemize}
consists in:
\begin{enumerate}
  \item \textbf{Initialization} : determine a subfamily $\mathcal{F}^0\subset \mathcal{B}_{K_{0}}$ of linearly independent polynomials such that $\rg(\mathcal{F}^0)=a_{K_{0}}$.
  \item \textbf{Iteration step $n$ ($1 \leq n \leq N$)}: suppose that we have obtained, at step $n-1$, the family $\mathcal{F}^{n-1} = \set{I_{1},\dotsc, I_{s}}$ and note that $\mathcal{F}^{n-1}$ may contain homogeneous polynomials with different multi-indices $K(I_{1}), \dotsc , K(I_{s})$ but all are strictly lower than $K_{n}$, where $K(I)$ stands for the multi-index of homogeneous polynomial $I$.
        \begin{enumerate}
          \item Determine the finite set $\mathcal{R}_{K_{n}}$ of all \emph{reducible} homogeneous polynomials of multi-degree $K_{n}$ that can be constructed, in two steps, using elements of $\mathcal{F}^{n-1}$:
                \begin{enumerate}
                  \item Find the $p$ solutions $\alpha_{1}^j, \alpha_{2}^j \dotsc, \alpha_{s}^j$ ($1\leq j\leq p$) of the linear Diophantine system
                        \begin{equation}\label{eq:Dioph_Equ_Hom_MDeg_m+1}
                          \alpha_{1}K(I_{1})+\dotsc +\alpha_{s}K(I_{s})=K_{n},
                        \end{equation}
                  \item If $p>0$, $\mathcal{R}_{K_{n}} = \set{I_{1}^{\alpha_{1}^j}I_{2}^{\alpha_{2}^j} \dotsm I_{s}^{\alpha_{s}^j};\;
                          1\le j\le p}$, else $\mathcal{R}_{K_{n}} = \emptyset$,
                \end{enumerate}
          \item if $\rg(\mathcal{R}_{K_{n}})=a_{K_{n}}$, $\mathcal{X}_{K_{n}}=\emptyset$, go to (d),
          \item Determine a subset $\mathcal{X}_{K_{n}} \subset \mathcal{B}_{K_{n}}$ of \emph{minimal cardinal} such that
                \begin{equation*}
                  \rg(\mathcal{R}_{K_{n}}\cup \mathcal{X}_{K_{n}})= a_{K_{n}},
                \end{equation*}
                \textit{i.e.} check one by one the invariants of $\mathcal{B}_{K_{n}}$ that need to be added to match the dimension $a_{K_{n}}$. This requires to compute the rank of the new set of vectors in $\mathcal{A}_{K_{n}}$, each time we add a new element,
          \item $\mathcal{F}^{n} := \mathcal{F}^{n-1}\cup \mathcal{X}_{K_{n}}$,
        \end{enumerate}
  \item  \textbf{Termination:} $\mathcal{MB}:=\mathcal{F}^{N}$.
\end{enumerate}

\medskip

The cleaning algorithm was applied with the following specifications.
\begin{itemize}
  \item All minimal solutions of the Diophantine equation~(\ref{eq:Diophantine-equation}) were obtained using the software of algebraic geometry \emph{Normaliz}~\cite{BI2010};
  \item The Diophantine equation \eqref{eq:diopheqtext} as well as the linear Diophantine system \eqref{eq:Dioph_Equ_Hom_MDeg_m+1}
        were solved using \emph{Mathematica}~\cite{Mathematica};
  \item The adopted total order on multi-index $K=(k_{1}, \dotsc, k_{r})$ was the lexicographic order
        \begin{equation*}
          K_{i} \preceq K_{j} \Leftrightarrow k^{i}_{1}=k^{j}_{1},\dotsc,k^{i}_{q}=k^{j}_{q},\quad k^{i}_{q+1}<k^{j}_{q+1},
          \quad \text{ for some } 1\leq q\leq r-1;
        \end{equation*}
  \item In step (2)(c) of the algorithm, it is necessary to order the element of $\mathcal{B}_{K_{n}}=\mathcal{B}\cap\mathcal{A}_{K_{n}}$ to be tested. We have used the Mathematica built-in function \textbf{Sort}.
\end{itemize}

\begin{rem}
  When the covariant algebra is involved, the cleaning algorithm is applied to the invariant algebra
  \begin{equation*}
    \inv(\RR^2 \oplus \HT{n_{1}} \oplus \dotsb \oplus\HT{n_{r}}, \OO(2)).
  \end{equation*}
  In that case, the multi-index has been numbered as
  \begin{equation*}
    K := (k_{0},k_{1},k_{2}, \dotsc , k_{r}),
  \end{equation*}
  where $k_{0}$ represents the degree in $\xx$, i.e. the order of the associated covariant. The choice of the adopted lexicographic order implies that the cleaning is processed by increasing the order of covariants, first. Of course, other total orders on the set of multi-index are possible and they may be more adapted to other situations considered.
\end{rem}

\section{From complex monomials to tensor covariants}
\label{sec:monomials-versus-tensors-invariants}

Regarding mechanical applications, an integrity basis should be expressed in terms of tensors invariants, rather than in terms of complex monomials. A translation of the real and imaginary parts $\Re(\bm_{l})$, $\Im(\bm_{l})$ of the previous monomials is thus mandatory. As recalled in~\autoref{sec:tensors}, there is a natural correspondence
\begin{equation*}
  \phi: \Sym^{n}(\RR^{2})\to \Pn{n},
\end{equation*}
which associates to any totally symmetric tensor $\bS$ of order $n$, an homogeneous polynomial $\rp$ of degree $n$, which writes as
\begin{equation*}
  \rp(\xx) = \bS(\xx, \dotsc, \xx), \qquad \xx = (x,y).
\end{equation*}
Under this isomorphism, which is $\OO(2)$-equivariant, the subspace $\HT{n}$ of harmonic tensors of order $n$ (traceless tensors) is sent to the subspace of homogeneous harmonic polynomials $\HP{n}$ (polynomials with vanishing Laplacian). A natural basis for $\HP{n}$ is given by the real and imaginary parts of the complex function $z^{n}=(x+iy)^{n}$,
\begin{equation*}
  \rp^{(n)}_{1} = \Re(x + iy)^{n}, \qquad \rp^{(n)}_{2} = \Im(x + iy)^{n}.
\end{equation*}
This basis corresponds to the image under $\phi$ of the following basis of $\HT{n}$
\begin{equation}\label{eq:Hn-tensorial-basis}
  \begin{aligned}
    \bK^{(n)}_{1} & = \sum_{k=0}^{\lfloor\frac{n}{2}\rfloor} \binom{n}{2k} (-1)^{k} \ee_{1}^{n-2k} \odot \ee_{2}^{2k},
    \\
    \bK^{(n)}_{2} & = \sum_{k=0}^{\lfloor\frac{n-1}{2}\rfloor} \binom{n}{2k+1} (-1)^{k} \ee_{1}^{n-(2k+1)} \odot \ee_{2}^{2k+1},
  \end{aligned}
\end{equation}
where $\odot$ stands for the symmetric tensor product \eqref{eq:odot} and
\begin{equation*}
  \ee_{i}^{p} := \ee_{i}\odot \dotsb \odot \ee_{i}
\end{equation*}
means the tensor product of $p$ copies of vector $\ee_{i}$. Thus, any harmonic polynomial $\rh$ in $\HP{n}$ writes
\begin{equation*}
  \rh = a \rp^{(n)}_{1} + b \rp^{(n)}_{2},
\end{equation*}
and the harmonic tensor $\bH = \phi^{-1}(\rh)$ in $\HT{n}$ writes
\begin{equation*}
  \bH =a\, \bK^{(n)}_{1} + b\,\bK^{(n)}_{2},
  \quad\textrm{where}\quad
  \begin{cases}
    a=H_{11 \cdots 11}
    \\
    b=H_{11 \cdots 12}
  \end{cases}
\end{equation*}
while the other components~\cite{Kan1984} of $\bH$ are
\begin{equation*}
  H_{\underbrace{{\scriptstyle 1 \cdots 1}}_{n-2p}\underbrace{{\scriptstyle 2 \cdots 2}}_{2p}} = (-1)^{p} H_{{\scriptstyle 1 \cdots 11}}
  \quad
  \text{and}
  \quad
  H_{\underbrace{{\scriptstyle 1 \cdots 1}}_{n-2p-1}\underbrace{{\scriptstyle 2 \cdots 2}}_{2p+1}} = (-1)^{p} H_{{\scriptstyle 1 \cdots 12}}.
\end{equation*}

\begin{rem}
  In both cases, the matrix form of $\rho_{n}$ in these bases is
  \begin{equation*}
    [\rho_{n}(r_{\theta})] =
    \begin{pmatrix}
      \cos n\theta & - \sin n\theta \\
      \sin n\theta & \cos n\theta
    \end{pmatrix}.
  \end{equation*}
  Note however, that none of the defined bases are \emph{orthonormal} for the natural scalar products on both spaces. There are however orthogonal and their norms are equal  {(see remark~\ref{rem:scaling-factors})}. Normalizing the bases, will not change the matrix representation and is thus inessential.
\end{rem}

Consider now a representation
\begin{equation}\label{eq:direct-sum}
  \VV \simeq \HT{n_{1}} \oplus \dotsb \oplus \HT{n_{r}},
\end{equation}
of $G = \SO(2)$ or $G = \OO(2)$, where $n_{k}\ge 1$ for each $k$. Generating sets for $\inv(\VV, G)$ have been provided in~\autoref{sec:integrity-bases}, but in terms of monomials
\begin{equation*}
  \bm =  z_{1}^{\alpha_{1}}\dotsm z_{r}^{\alpha_{r}}\zb_{1}^{\beta_{1}}\dotsm \zb_{r}^{\beta_{r}},
\end{equation*}
whose exponents satisfy the linear Diophantine equation~\eqref{lem:Diophantine-equation}. Here, $z_{k}=a_{k}+ i b_{k}$ corresponds to the components $(a_{k}, b_{k})$ of the factor $\bH_{k}\in \HT{n_{k}}$ in the direct sum~\eqref{eq:direct-sum} relative to the basis $(\bK^{(n)}_{1},\bK^{(n)}_{2})$, where we have used the correspondence
\begin{equation*}
  \rh_{k}(\xx) = \bH_{k}(\xx, \dotsc, \xx)\in \HP{n_{k}},
\end{equation*}
and recast $\rh_{k}$ as a polynomial function of the complex variables $z=x+iy$ and $\overline{z}=x-iy$.

It is the goal of this section to translate real and imaginary parts of the monomials $\bm$ into tensors invariants. To do so, observe first that
\begin{equation*}
  \rh_{k} = \Re (\overline{z}_{k}z^{n_{k}}) = \Re (z_{k}\overline{z}^{n_{k}}) = \frac{1}{2}\left(\overline{z}_{k}z^{n_{k}} + z_{k}\overline{z}^{n_{k}}\right),
\end{equation*}
and its conjugate harmonic function $\widetilde{\rh}_{k}$ writes as
\begin{equation*}
  \widetilde{\rh}_{k} = \Im (\overline{z}_{k}z^{n_{k}}) = -\Im (z_{k}\overline{z}^{n_{k}}) = \frac{1}{2i}\left(\overline{z}_{k}z^{n_{k}} - z_{k}\overline{z}^{n_{k}}\right),
\end{equation*}
where $z = x+iy$ and $z_{k} = a_{k}+ib_{k}$. We will now provide three theorems which enable to translate real and imaginary parts of monomials $\bm$ into tensors invariants. Their proofs are provided in~\autoref{sec:proofs}.

\begin{thm}\label{thm:1xH}
  Let $\rh_{k} = \phi(\bH_{k}) \in \HP{n_{k}}$, where $n_{k} \ge 1$. Then,
  \begin{equation*}
    \widetilde \rh_{k} = \Im (\overline{z}_{k}z^{n_{k}}) = \phi(\id \times \bH_{k}) .
  \end{equation*}
\end{thm}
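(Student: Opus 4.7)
The plan is to translate the tensorial claim $\widetilde h_k = \phi(\mathbf{1}\times \mathbf{H}_k)$ into the polynomial world via $\phi$, and then verify the equality by a short complex-variable computation.

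First I would note that $\mathbf{1}\in \Sym^{2}(\RR^{2})$ corresponds, under $\phi$, to the polynomial $p_{\mathbf{1}}(\xx)=x^{2}+y^{2}=z\overline{z}$. Applying the polynomial translation~\eqref{eq:crossp} of the skew-symmetric contraction with $n_{1}=2$, $n_{2}=n_{k}$, I get
\begin{equation*}
  \phi(\mathbf{1}\times \mathbf{H}_{k})=[p_{\mathbf{1}},h_{k}]=-\frac{1}{2n_{k}}\det\!\bigl(\nabla p_{\mathbf{1}},\nabla h_{k}\bigr)=\frac{1}{n_{k}}\bigl(y\,\partial_{x}h_{k}-x\,\partial_{y}h_{k}\bigr),
\end{equation*}
since $\nabla p_{\mathbf{1}}=(2x,2y)$. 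Thus the theorem reduces to showing that $(y\,\partial_{x}-x\,\partial_{y})h_{k}=n_{k}\,\Im(\overline{z}_{k}z^{n_{k}})$.

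For this, I would switch to Wirtinger derivatives. Recall that $\partial_{x}=\partial_{z}+\partial_{\overline{z}}$ and $\partial_{y}=i(\partial_{z}-\partial_{\overline{z}})$, hence the infinitesimal rotation operator takes the compact form
\begin{equation*}
  y\,\partial_{x}-x\,\partial_{y}=-i\bigl(z\,\partial_{z}-\overline{z}\,\partial_{\overline{z}}\bigr).
\end{equation*}
Writing $h_{k}=\tfrac{1}{2}\bigl(\overline{z}_{k}z^{n_{k}}+z_{k}\overline{z}^{n_{k}}\bigr)$, the operator acts diagonally on each monomial: it multiplies $\overline{z}_{k}z^{n_{k}}$ by $-in_{k}$ and $z_{k}\overline{z}^{n_{k}}$ by $+in_{k}$. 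Therefore
\begin{equation*}
  (y\,\partial_{x}-x\,\partial_{y})h_{k}=\frac{in_{k}}{2}\bigl(z_{k}\overline{z}^{n_{k}}-\overline{z}_{k}z^{n_{k}}\bigr)=n_{k}\,\Im\!\bigl(\overline{z}_{k}z^{n_{k}}\bigr),
\end{equation*}
using $\Im(w)=\frac{1}{2i}(w-\overline{w})$. Dividing by $n_{k}$ yields $\phi(\mathbf{1}\times \mathbf{H}_{k})=\Im(\overline{z}_{k}z^{n_{k}})=\widetilde h_{k}$.

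There is no real obstacle here; the only thing to keep track of is getting the sign and factor of $n_{k}$ right in the Wirtinger computation. A conceptual remark worth stating: the operator $y\partial_{x}-x\partial_{y}$ is (up to sign) the infinitesimal generator of $\SO(2)$, which diagonalizes on each irreducible component $\HP{n_{k}}$ with eigenvalues $\pm in_{k}$; this is the underlying reason the identity holds and makes it clear that the same argument would yield any other multiplier rule of this type.
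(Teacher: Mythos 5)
Your proof is correct and follows essentially the same route as the paper: you translate $\id\times\bH_{k}$ into the bracket $[\qq,\rh_{k}]$ via \eqref{eq:crossp} and then evaluate it in complex coordinates, which is exactly the content of \eqref{eq:LP-q-h1} used in the paper's proof (the paper reaches the same Wirtinger-derivative computation through the Cayley operator, whereas you phrase it via the rotation generator $-i(z\partial_{z}-\overline{z}\partial_{\overline{z}})$, a cosmetic difference). The signs and the factor $n_{k}$ check out, so nothing is missing.
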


\begin{thm}\label{thm:harmonic-product}
  Let $\bH_{j}\in \HT{n_{j}}$ be harmonic tensors where $n_{j} \ge 1$ and set $\phi(\bH_{j}) = \Re(\overline{z}_{j} z^{n_{j}})$. Then
  \begin{equation*}
    \Re\left(\overline{z}_{1}\dotsm \overline{z}_{p} z^{n_{1}+ \dotsb + n_{p}}\right) = 2^{p-1} \phi\left( (\bH_{1} \odot \dotsb \odot \bH_{p})^{\prime} \right).
  \end{equation*}
\end{thm}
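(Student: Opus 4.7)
The plan is to transport the identity to the polynomial side through the isomorphism $\phi\colon \Sym^{N}(\RR^{2})\to \Pn{N}$ with $N=n_{1}+\dotsb+n_{p}$. This $\phi$ is $\OO(2)$-equivariant, intertwines the symmetric tensor product with ordinary multiplication, and sends $\HT{N}$ isomorphically onto $\HP{N}$. Hence the harmonic decomposition $\Sym^{N}(\RR^{2})=\HT{N}\oplus \id\odot \Sym^{N-2}(\RR^{2})$ corresponds under $\phi$ to the direct-sum decomposition $\Pn{N}=\HP{N}\oplus r^{2}\Pn{N-2}$ with $r^{2}=z\overline{z}=x^{2}+y^{2}$, and the harmonic projections $(\cdot)'$ on both sides intertwine. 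In particular,
\begin{equation*}
  \phi\!\left((\bH_{1}\odot\dotsb\odot\bH_{p})'\right) = (\rh_{1}\dotsm \rh_{p})',
\end{equation*}
so it is enough to evaluate the right-hand side.

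Next, I would write each factor in complex form $\rh_{j}=\tfrac{1}{2}(\overline{z}_{j}z^{n_{j}}+z_{j}\overline{z}^{n_{j}})$, being careful that the $z_{j}=a_{j}+ib_{j}$ are constants (the components of $\bH_{j}$ in the basis $(\bK^{(n_{j})}_{1},\bK^{(n_{j})}_{2})$), while $z=x+iy$ is the spatial variable on which the projection acts. Expanding the product and indexing by $S\subseteq \{1,\dotsc,p\}$ gives
\begin{equation*}
  \rh_{1}\dotsm \rh_{p} = \frac{1}{2^{p}}\sum_{S}\Bigl(\prod_{j\notin S}\overline{z}_{j}\prod_{j\in S}z_{j}\Bigr)\,z^{N-n(S)}\overline{z}^{n(S)}, \qquad n(S):=\sum_{j\in S}n_{j}.
\end{equation*}
The coefficients in the $z_{j},\overline{z}_{j}$ are constants in $\xx$, so the problem reduces to computing the harmonic projection of each monomial $z^{a}\overline{z}^{b}$ with $a+b=N$.

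The key observation is that $z^{a}\overline{z}^{b}$ is harmonic in $\xx$ precisely when $\min(a,b)=0$: using $z\overline{z}=r^{2}$, one has $z^{a}\overline{z}^{b}=r^{2\min(a,b)}\cdot z^{a-\min(a,b)}\overline{z}^{b-\min(a,b)}$, so whenever $a,b\ge 1$ this monomial lies in $r^{2}\Pn{N-2}$ and contributes nothing to the projection onto $\HP{N}$. The only subsets with $\min(a,b)=0$ are $S=\emptyset$ (yielding $\overline{z}_{1}\dotsm\overline{z}_{p}\,z^{N}$) and $S=\{1,\dotsc,p\}$ (yielding $z_{1}\dotsm z_{p}\,\overline{z}^{N}$), which are complex conjugates of each other. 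Consequently,
\begin{equation*}
  (\rh_{1}\dotsm \rh_{p})' = \frac{1}{2^{p}}\bigl(\overline{z}_{1}\dotsm\overline{z}_{p}\,z^{N}+z_{1}\dotsm z_{p}\,\overline{z}^{N}\bigr) = \frac{1}{2^{p-1}}\Re\!\bigl(\overline{z}_{1}\dotsm\overline{z}_{p}\,z^{N}\bigr),
\end{equation*}
and combining with the intertwining identity above gives the stated formula.

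I do not anticipate any substantial obstacle: the two ingredients—equivariance of $\phi$ and the vanishing modulo $r^{2}\Pn{N-2}$ of every $z^{a}\overline{z}^{b}$ with $a,b\ge 1$—are both elementary. The only delicate point to keep straight is notational: the scalars $z_{j}$ (encoding the $\bH_{j}$) must be treated as constants by the harmonic projection, which acts on the spatial variable $z$ alone.
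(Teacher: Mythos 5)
Your proposal is correct and follows essentially the same route as the paper: translating via $\phi$ (which sends $\odot$ to polynomial multiplication and the leading harmonic part to the leading harmonic part), writing each $\rh_{j}=\tfrac12(\overline{z}_{j}z^{n_{j}}+z_{j}\overline{z}^{n_{j}})$, and observing that every cross term in the expanded product carries a factor $z\overline{z}=\qq$ and hence drops out of the projection onto $\HP{N}$, leaving only $\tfrac{1}{2^{p-1}}\Re(\overline{z}_{1}\dotsm\overline{z}_{p}z^{N})$, which is exactly the content of~\eqref{eq:product-harmonic-part} used in the paper's proof. Your subset-indexed expansion simply makes explicit the combinatorial step the paper states and illustrates with the $p=3$ example.
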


\begin{thm}\label{thm:monomials-translation}
  Let $\bH_{j}\in \HT{n_{j}}$ be harmonic tensors where $n_{j} \ge 1$ and set $\phi(\bH_{j}) = \Re(\overline{z}_{j} z^{n_{j}})$. Let $N_{1} = n_{1} + \dotsb + n_{p}$, $N_{2} = n_{p+1} + \dotsb + n_{p+s}$ and assume that $N_{1} \le N_{2}$. Then
  \begin{multline*}
    \Re\left(z_{1}\dotsm z_{p} \zb_{p+1} \dotsm \zb_{p+s} z^{N_{2}-N_{1}}\right)
    \\
    = 2^{(p + s-1-N_{1})} \phi\left((\bH_{1} \odot \dotsb \odot \bH_{p})^{\prime} \rdots{N_{1}}(\bH_{p+1} \odot \dotsb \odot \bH_{p+s})^{\prime}\right),
  \end{multline*}
  and
  \begin{align*}
     & \Im\left(z_{1}\dotsm z_{p} \zb_{p+1} \dotsm \zb_{p+s} z^{N_{2}-N_{1}}\right)
    \\
     & \quad = \frac{2^{(p + s - 2N_{1})}(N_{1}+N_{2}-2)!}{(N_{1}-1)!(N_{2}-1)!} \phi \left(\tr^{(N_{1}-1)} \left((\bH_{1} \odot \dotsb \odot \bH_{p})^{\prime} \times (\bH_{p + 1} \odot \dotsb \odot \bH_{p + s})^{\prime}\right)\right)
    \\
     & \quad = 2^{(p + s-1-N_{1})} \phi\left((\bH_{1} \odot \dotsb \odot \bH_{p})^{\prime} \rdots{N_{1}}
    ([\id\times\bH_{p + 1}] \odot \dotsb \odot \bH_{p + s})^{\prime}]\right)
    \\
     & \quad = -2^{(p + s-1-N_{1})}
    \phi\left(([\id\times\bH_{1}] \odot \dotsb \odot \bH_{p})^{\prime} \rdots{N_{1}}
    (\bH_{p + 1} \odot \dotsb \odot \bH_{p + s})^{\prime}]\right).
  \end{align*}
\end{thm}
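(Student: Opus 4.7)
My plan is to reduce everything to the complex encodings of the two harmonic tensors $\bA := (\bH_1 \odot \dotsb \odot \bH_p)^{\prime} \in \HT{N_1}$ and $\bB := (\bH_{p+1} \odot \dotsb \odot \bH_{p+s})^{\prime} \in \HT{N_2}$. Writing $\phi(\bA) = \alpha z^{N_1} + \bar\alpha \bar z^{N_1}$ and $\phi(\bB) = \beta z^{N_2} + \bar\beta \bar z^{N_2}$, Theorem~\ref{thm:harmonic-product} identifies these coefficients explicitly as $\alpha = 2^{-p}\bar z_1\dotsb\bar z_p$ and $\beta = 2^{-s}\bar z_{p+1}\dotsb\bar z_{p+s}$, so that $\bar\alpha\beta = 2^{-p-s}\,z_1\dotsb z_p\bar z_{p+1}\dotsb\bar z_{p+s}$ is precisely the monomial whose real and imaginary parts we want to translate.

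For the real part, I would observe that $\bA \rdots{N_1} \bB$ is automatically totally symmetric (since $\bA$ is fully contracted, only the symmetric indices of $\bB$ remain), so $\phi(\bA \rdots{N_1} \bB) = \{\phi(\bA),\phi(\bB)\}_{N_1}$ via the transvectant~\eqref{eq:rcontractp}. Evaluating this by direct differentiation yields the elementary identities $\{z^a,z^b\}_r = \{\bar z^a,\bar z^b\}_r = 0$ for $r\ge 1$ and $\{z^a,\bar z^b\}_r = 2^r z^{a-r}\bar z^{b-r}$ for $r\le\min(a,b)$; applying them bilinearly with $r=N_1$ and collecting gives $\phi(\bA\rdots{N_1}\bB) = 2^{N_1+1}\Re(\bar\alpha\beta z^{N_2-N_1})$, which is the stated real part formula after substitution. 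The second and third imaginary part formulas will then follow from this real part formula via Theorem~\ref{thm:1xH}: since $\phi(\id\times\bH) = \Im(\bar z_H z^n) = \Re(\overline{iz_H}\,z^n)$, replacing a factor $\bH$ by $\id\times\bH$ amounts to multiplying its complex encoding by $i$. Substituting $\bH_{p+1}$ by $\id\times\bH_{p+1}$ (resp.\ $\bH_1$ by $\id\times\bH_1$) transforms $\beta \mapsto -i\beta$ (resp.\ $\alpha \mapsto -i\alpha$), which turns $\Re(\bar\alpha\beta z^{N_2-N_1})$ into $\pm\Im(\bar\alpha\beta z^{N_2-N_1})$, with the signs as stated.

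For the first imaginary part formula, I would compute $\phi(\bA\times\bB)$ via~\eqref{eq:crossp} using the complex gradients $\nabla z^n = nz^{n-1}(1,i)$ and $\nabla\bar z^n = n\bar z^{n-1}(1,-i)$, which produce $[z^a,\bar z^b] = 2iz^{a-1}\bar z^{b-1}$ and $[\bar z^a,z^b] = -2i\bar z^{a-1}z^{b-1}$ (the holomorphic-holomorphic and antiholomorphic-antiholomorphic brackets vanish). Bilinearity then yields $\phi(\bA\times\bB) = -4\Im(\alpha\bar\beta z^{N_1-1}\bar z^{N_2-1})$. Next, for a symmetric tensor $\bS$ of order $n$, iterated trace satisfies $\phi(\tr^{(k)}\bS) = \frac{(n-2k)!}{n!}\Delta^k\phi(\bS)$; using the Wirtinger form $\Delta = 4\partial_z\partial_{\bar z}$, one obtains $\Delta^k(z^a\bar z^b) = 4^k\frac{a!\,b!}{(a-k)!(b-k)!}z^{a-k}\bar z^{b-k}$, and with $k=N_1-1$ this kills the $z^{N_1-1}$ factor; collecting the remaining factorials and powers of $2$ produces the coefficient $\frac{2^{p+s-2N_1}(N_1+N_2-2)!}{(N_1-1)!(N_2-1)!}$ appearing in the theorem. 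The main obstacle throughout is not conceptual but a careful accounting of $2$-powers, factorials, and signs; once the three basic complex identities (transvectant, Jacobian bracket, and iterated Wirtinger Laplacian) are established, the rest is mechanical algebra.
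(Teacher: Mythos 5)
Your proposal is correct and follows essentially the same route as the paper's own proof: both reduce the statement to complex-variable identities for the transvectant, the Jacobian bracket and the iterated Laplacian acting on monomials (the paper's \eqref{eq:transvectant-h1-h2}, \eqref{eq:LP-h1-h2} and \eqref{eq:Laplacian-LP}), identify the coefficients of $\phi\left((\bH_{1}\odot\dotsb\odot\bH_{p})'\right)$ and $\phi\left((\bH_{p+1}\odot\dotsb\odot\bH_{p+s})'\right)$ via theorem~\ref{thm:harmonic-product}, and obtain the last two imaginary-part formulas by the substitution $\Im z=\Re(-iz)$, i.e.\ replacing a factor $\bH$ by $\id\times\bH$. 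Your constants and signs check out (e.g.\ $\phi(\bA\times\bB)=-4\Im(\alpha\bar\beta z^{N_{1}-1}\bar z^{N_{2}-1})$ agrees with the paper's $[\mathrm{H}_{1},\mathrm{H}_{2}]=(z\bar z)^{N_{1}-1}\Im(Z_{1}\bar Z_{2}z^{N_{2}-N_{1}})$ after rescaling), so the only difference is that you re-derive the monomial identities directly rather than quoting the paper's preliminary formulas.
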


\begin{exa}\label{exa:tensors-invariants-elasticity-tensor}
  Let $\bC\in \Ela$ be a bidimensional elasticity tensor. Its  harmonic decomposition writes $\bC \simeq (\lambda, \mu, \bh, \bH)$, where $\bh\in \HT{2}$ and $\bH\in \HT{4}$ (see example \ref{ex:dec-harm-4-order} and remark \ref{rem:dec-harm-Ela}). Writing
  \begin{equation*}
    \phi(\bh) = \Re(\overline{z}_{2} z^{2}), \quad \text{and} \quad \phi(\bH) = \Re(\overline{z}_{4} z^4),
  \end{equation*}
  the translation of monomial invariants given in~\ref{ex:ElaSO2} and~\ref{ex:ElaO2}, namely
  \begin{equation*}
    z_{2}\overline{z_{2}}, \quad z_{4}\overline{z_{4}}, \quad \Re(z_{2}^{2}\overline{z_{4}}), \quad \Im(z_{2}^{2}\overline{z_{4}})
  \end{equation*}
  write
  \begin{align*}
    z_{2}\overline{z_{2}}          & = \frac{1}{2}(\bh \rdots{2} \bh)=\frac{1}{2} \bh:\bh,
    \\
    z_{4}\overline{z_{4}}          & = \frac{1}{2^{3} } (\bH \rdots{4} \bH)=\frac{1}{8} \bH::\bH,
    \\
    \Re(z_{2}^{2}\overline{z_{4}}) & = \frac{1}{4} ((\bh\odot\bh)' \rdots{4} \bH) = \frac{1}{4} (\bh : \bH : \bh),
  \end{align*}
  while there are several possibilities to translate $\Im(z_{2}^{2}\overline{z_{4}})$:
  \begin{align*}
    \Im(z_{2}^{2}\overline{z_{4}}) & = \frac{5}{8} \tr^{3} ((\bh\odot\bh)' \times \bH)
    \\
                                   & =\frac{1}{4} \big((\bh\odot\bh)' \rdots{4} (\id \times \bH)\big)=\frac{1}{4} \bh: (\id \times \bH):\bh
    \\
                                   & =-\frac{1}{4} \big(((\id\times\bh)\odot\bh)' \rdots{4} \bH\big)=-\frac{1}{4} \bh : \bH :(\id\times\bh).
  \end{align*}
  We deduce thus the following results.
  \begin{enumerate}
    \item A minimal $\SO(2)$-integrity basis for $\bC\in \Ela$ is
          \begin{equation*}
            \lambda, \quad \mu, \quad \bh:\bh, \quad  \bH::\bH, \quad \bh:\bH :\bh,
            \quad
            \bh : \bH :(\id\times\bh).
          \end{equation*}
    \item A minimal $\OO(2)$-integrity basis for $\bC\in \Ela$ is
          \begin{equation*}
            \lambda, \quad \mu, \quad \bh:\bh,\quad  \bH::\bH, \quad \bh:\bH :\bh.
          \end{equation*}
  \end{enumerate}
\end{exa}

\section{Minimal covariant bases for most common constitutive tensors and laws}
\label{sec:applications}

In this section, we illustrate the power of the method explained in this paper by providing a minimal integrity basis for an exhaustive list of constitutive tensors and laws which involve several tensors. More precisely, applying theorems \ref{thm:SO2-minimal-integrity-basis}, \ref{thm:O2-integrity-basis}, and \ref{thm:important-reduction}, and using the cleaning algorithm detailed in~\autoref{sec:algorithm}, we obtain explicit results in 2D for:
\begin{itemize}
  \item Third-order tensors with no index symmetry $\TT^{3}(\RR^{2})$, thus for third-order tensors with any kind of index symmetries (by theorem~\ref{thm:subspace-reduction}), and in particular for the piezoelectricity tensor $\bP\in \Piez$;
  \item Fourth-order tensors with no index symmetry $\TT^{4}(\RR^{2})$, thus for fourth-order tensors with any kind of index symmetries (by example \ref{ex:dec-harm-4-order} and
        theorem~\ref{thm:subspace-reduction}), and in particular for the elasticity tensor $\bC\in \Ela$ and the photoelasticity/Eshelby tensor $\bPi\in \Gel$;
  \item The complex viscoelasticity tensor, or more precisely its de-complexification, $\Ela \oplus \Ela$,
  \item The Hill elasto-plasticity constitutive equations;
  \item The linear piezoelectricity constitutive law, which involves three constitutive tensors, the dielectric permittivity tensor (of order two), the piezoelectricity tensor (of order three) and the elasticity tensor (of order four);
  \item Twelfth-order totally symmetric tensors $\bS\in\Sym^{12}(\RR^{2})$ \and thus for totally symmetric fabric tensors~\cite{Kan1984} of order 4, 6, 8 and 10 (by remark \ref{rem:Snmoins2k}).
\end{itemize}

In each case, we provide an harmonic decomposition and a minimal integrity basis of the covariant algebra (except for $\Sym^{12}(\RR^{2})$, for which we provide only an integrity basis for its invariant algebra due to its very large cardinal), and this both for $G=\OO(2)$ and $G=\SO(2)$.

\begin{rem}
  For each produced $\SO(2)$-integrity basis, the generators $I$ satisfy either $\sigma \star I = I$ or $\sigma \star I = -I$. In the first case, we shall refer to $I$ as an \emph{isotropic invariant} (since it is $\OO(2)$-invariant), and in the second case, as an \emph{hemitropic invariant}. Besides, the following notation has been adopted. For each factor $\HT{n}$ which occurs in the harmonic decomposition provided, the corresponding variable is written as $z_{n}$, if there is only one component $\HT{n}$ in this decomposition or $z_{na}$, $z_{nb}$, $z_{nc}$, \ldots if the component $\HT{n}$ appears with multiplicity.
\end{rem}

\subsection{Third-order tensors}
\label{subsec:third-order-tensors}

The harmonic decomposition of $\TT^{3}(\RR^{2})$ is the same for $\SO(2)$ and $\OO(2)$ and writes
\begin{equation*}
  \TT^{3}(\RR^{2}) \simeq 3 \HT{1} \oplus \HT{3}.
\end{equation*}
We will thus write $\bT = (z_{1a},z_{1b},z_{1c},z_{3})$, after the choice of an explicit harmonic decomposition (such as example~\ref{ex:T3-harmonic-decomposition}) and we have the following result.

\begin{thm}\label{thm:third-order-tensors}
  A minimal integrity basis for $\cov(\TT^{3}(\RR^{2}),\SO(2))$ consists in the 57 covariants (30 invariants) of~\autoref{tab:3-order-SO2-isotropic} and~\autoref{tab:3-order-SO2-hemitropic}. A minimal integrity basis for $\cov(\TT^{3}(\RR^{2}),\OO(2))$ is provided by the 31 covariants (17 invariants) of~\autoref{tab:3-order-SO2-isotropic}.
\end{thm}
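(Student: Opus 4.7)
The plan is to specialize the general machinery of Sections \ref{sec:integrity-bases}, \ref{sec:algorithm} and \ref{sec:monomials-versus-tensors-invariants} to the representation $\VV = 3\HT{1} \oplus \HT{3}$ and the auxiliary factor $\RR^{2}$ attached to it for the covariant algebra. After fixing complex variables $z_{1a}, z_{1b}, z_{1c}$ for the three copies of $\HT{1}$ and $z_{3}$ for $\HT{3}$, together with $z = x + iy$ for the auxiliary $\RR^{2}$, an invariant monomial
\begin{equation*}
  z^{\gamma}\overline{z}^{\delta}\, z_{1a}^{\alpha_{1}}\overline{z}_{1a}^{\beta_{1}}\, z_{1b}^{\alpha_{2}}\overline{z}_{1b}^{\beta_{2}}\, z_{1c}^{\alpha_{3}}\overline{z}_{1c}^{\beta_{3}}\, z_{3}^{\alpha_{4}}\overline{z}_{3}^{\beta_{4}}
\end{equation*}
is $\SO(2)$-invariant exactly when
\begin{equation*}
  (\gamma - \delta) + (\alpha_{1} - \beta_{1}) + (\alpha_{2} - \beta_{2}) + (\alpha_{3} - \beta_{3}) + 3(\alpha_{4} - \beta_{4}) = 0,
\end{equation*}
which is the Diophantine equation~\eqref{eq:Diophantine-equation} for this particular decomposition.

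First I would feed this equation to Normaliz (as prescribed just after~\eqref{eq:Dioph_Equ_Hom_MDeg_m+1}) to enumerate its irreducible non-negative solutions. By Theorem~\ref{thm:SO2-minimal-integrity-basis}, the associated real and imaginary parts of these monomials, together with the squared moduli $|z_{k}|^{2}$ and the auxiliary $|z|^{2}$ (which comes from the $\RR^{2}$ factor of the covariant algebra), automatically form a minimal integrity basis of $\cov(\TT^{3}(\RR^{2}), \SO(2))$. Sorting the generators by their $\sigma$-parity (invariants $I$ with $\sigma\star I = I$ versus those with $\sigma\star I = -I$) yields the isotropic/hemitropic split reported in Tables~\ref{tab:3-order-SO2-isotropic} and~\ref{tab:3-order-SO2-hemitropic}, and a count of the irreducible solutions has to recover $57$ covariants and $30$ invariants.

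For the $\OO(2)$ case, I would apply Theorem~\ref{thm:O2-integrity-basis} to obtain a (non-minimal) generating set consisting of $|z_{k}|^{2}$, $\Re(\bm_{l})$ and those products $\Im(\bm_{p})\Im(\bm_{q})$ not already killed by a factor $z_{s}\overline{z}_{s}$. The hard part is then checking minimality: I would run the cleaning algorithm of~\autoref{sec:algorithm}, organizing the candidates by multi-degree $K = (k_{0}, k_{1a}, k_{1b}, k_{1c}, k_{3})$ in lexicographic order (with $k_{0}$ the covariant order), computing the dimensions $a_{K}$ from the Hilbert series via Theorem~\ref{thm:O2-Hilbert-series}, and at each step testing which candidates in $\mathcal{B}_{K_{n}}$ actually add to the span of the products of previously selected generators. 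The expected main obstacle is exactly this linear-algebra bookkeeping: because $\HT{1}$ appears with multiplicity three and there is an additional $\HT{3}$, the number of irreducible monomials grows quickly, and ruling out candidates requires careful rank tests in each $\mathcal{A}_{K}$. The cleaning is guaranteed to terminate and yields a minimal basis because Theorem~\ref{thm:important-reduction} ensures no spurious cross-term between the scalar/pseudo-scalar part and the $2$-dimensional irreducibles is introduced (there are none here, so $\mathcal{MB}$ already describes the whole basis).

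Finally, once the minimal bases are identified as complex expressions, I would translate every $\Re(\bm_{l})$ and $\Im(\bm_{l})$ into tensorial form using Theorems~\ref{thm:1xH}, \ref{thm:harmonic-product} and~\ref{thm:monomials-translation}, exactly as illustrated in Example~\ref{exa:tensors-invariants-elasticity-tensor}. Summing the $\OO(2)$-invariants kept and the hemitropic ones removed gives the announced $31$ covariants ($17$ invariants) for $\cov(\TT^{3}(\RR^{2}), \OO(2))$ and $57$ covariants ($30$ invariants) for $\cov(\TT^{3}(\RR^{2}), \SO(2))$, matching the cardinalities of the tables and concluding the proof.
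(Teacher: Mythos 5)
Your proposal is correct and follows essentially the same route as the paper: decompose $\TT^{3}(\RR^{2})\oplus\RR^{2}$ as $4\HT{1}\oplus\HT{3}$ in complex variables, obtain the minimal $\SO(2)$ basis directly from Theorem~\ref{thm:SO2-minimal-integrity-basis} via the irreducible solutions of the Diophantine equation (computed with Normaliz), and for $\OO(2)$ start from the generating set of Theorem~\ref{thm:O2-integrity-basis} and run the cleaning algorithm with Hilbert-series dimension counts, which eliminates all the $\Im(\bm_{p})\Im(\bm_{q})$ products and leaves exactly the isotropic covariants of Table~\ref{tab:3-order-SO2-isotropic}. This is precisely how the paper establishes the theorem, so no further comparison is needed.
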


\begin{table}[h]
  \caption{Isotropic covariants of $\TT^{3}(\RR^{2})$}
  \label{tab:3-order-SO2-isotropic}
  \begin{subtable}[T]{.5\linewidth}
  \centering
  \scriptsize
  \begin{tabular}{cccl}
    \toprule
    \# & order & degree & Formula                                                            \\
    \midrule
    1  & 0     & 2      & $z_{1a}\overline{z}_{1a}$                                         \\
    2  & 0     & 2      & $z_{1b}\overline{z}_{1b}$                                         \\
    3  & 0     & 2      & $z_{1c}\overline{z}_{1c}$                                         \\
    4  & 0     & 2      & $z_{3}\overline{z}_{3}$                                           \\
    5  & 0     & 2      & $\Re[z_{1a} \overline{z}_{1b}]$                                    \\
    6  & 0     & 2      & $\Re[z_{1a} \overline{z}_{1c}]$                                    \\
    7  & 0     & 2      & $\Re[z_{1b} \overline{z}_{1c}]$                                    \\
    8  & 0     & 4      & $\Re[z_{1a}^{3} \overline{z}_{3}]$                                 \\
    9  & 0     & 4      & $\Re[z_{1b}^{3} \overline{z}_{3}]$                                 \\
    10 & 0     & 4      & $\Re[z_{1c}^{3} \overline{z}_{3}]$                                 \\
    11 & 0     & 4      & $\Re[z_{1a}z_{1b}^{2} \overline{z}_{3}]$               \\
    12 & 0     & 4      & $\Re[z_{1b}z_{1c}^{2} \overline{z}_{3}]$               \\
    13 & 0     & 4      & $\Re[z_{1b}^{2}z_{1c} \overline{z}_{3}]$               \\
    14 & 0     & 4      & $\Re[z_{1a}^{2}z_{1c} \overline{z}_{3}]$               \\
    15 & 0     & 4      & $\Re[z_{1a}^{2}z_{1b} \overline{z}_{3}]$               \\
    16 & 0     & 4      & $\Re[z_{1a}z_{1c}^{2} \overline{z}_{3}]$               \\
    17 & 0     & 4      & $\Re[z_{1a}z_{1b}z_{1c} \overline{z}_{3}]$ \\
    \bottomrule
  \end{tabular}
\end{subtable}%
\begin{subtable}[T]{.5\linewidth}
  \centering
  \scriptsize
  \begin{tabular}{cccl}
    \toprule
    \# & order & degree & Formula                                                       \\
    \midrule
    18 & 1     & 1      & $\Re[\overline{z}_{1a}z]$                                    \\
    19 & 1     & 1      & $\Re[\overline{z}_{1b}z]$                                    \\
    20 & 1     & 1      & $\Re[\overline{z}_{1c}z]$                                    \\
    21 & 1     & 3      & $\Re[z_{1a}^{2} \overline{z}_{3}z]$               \\
    22 & 1     & 3      & $\Re[z_{1b}^{2} \overline{z}_{3}z]$               \\
    23 & 1     & 3      & $\Re[z_{1c}^{2} \overline{z}_{3}z]$               \\
    24 & 1     & 3      & $\Re[z_{1a}z_{1b} \overline{z}_{3}z]$ \\
    25 & 1     & 3      & $\Re[z_{1a}z_{1c} \overline{z}_{3}z]$ \\
    26 & 1     & 3      & $\Re[z_{1b}z_{1c} \overline{z}_{3}z]$ \\
    \midrule
    27 & 2     & 0      & $z \overline{z}$                                               \\
    28 & 2     & 2      & $\Re[z_{1a} \overline{z}_{3}z^{2}]$               \\
    29 & 2     & 2      & $\Re[z_{1b} \overline{z}_{3}z^{2}]$               \\
    30 & 2     & 2      & $\Re[z_{1c} \overline{z}_{3}z^{2}]$               \\
    \midrule
    31 & 3     & 1      & $\Re[\overline{z}_{3}z^{3}]$                                  \\
    \bottomrule
  \end{tabular}
\end{subtable}

\end{table}

\begin{table}[h]
  \caption{Hemitropic covariants of $\TT^{3}(\RR^{2})$}
  \label{tab:3-order-SO2-hemitropic}
  \setcounter{mtligne}{31}

\begin{subtable}[T]{.5\linewidth}
  \centering
  \scriptsize
  \begin{tabular}{>{\stepcounter{mtligne} \themtligne}cccl}
    \toprule
    \multicolumn{1}{c}{\#} & order & degree & Formula                                                            \\
    \midrule
                           & 0     & 2      & $\Im[z_{1a} \overline{z}_{1b}]$                                    \\
                           & 0     & 2      & $\Im[z_{1a} \overline{z}_{1c}]$                                    \\
                           & 0     & 2      & $\Im[z_{1b} \overline{z}_{1c}]$                                    \\
                           & 0     & 4      & $\Im[z_{1a}^{3} \overline{z}_{3}]$                                 \\
                           & 0     & 4      & $\Im[z_{1b}^{3} \overline{z}_{3}]$                                 \\
                           & 0     & 4      & $\Im[z_{1c}^{3} \overline{z}_{3}]$                                 \\
                           & 0     & 4      & $\Im[z_{1a} z_{1b}^{2} \overline{z}_{3}]$               \\
                           & 0     & 4      & $\Im[z_{1b} z_{1c}^{2} \overline{z}_{3}]$               \\
                           & 0     & 4      & $\Im[z_{1b}^{2} z_{1c} \overline{z}_{3}]$               \\
                           & 0     & 4      & $\Im[z_{1a}^{2} z_{1c} \overline{z}_{3}]$               \\
                           & 0     & 4      & $\Im[z_{1a}^{2} z_{1b} \overline{z}_{3}]$               \\
                           & 0     & 4      & $\Im[z_{1a} z_{1c}^{2} \overline{z}_{3}]$               \\
                           & 0     & 4      & $\Im[z_{1a} z_{1b} z_{1c} \overline{z}_{3}]$ \\
    \bottomrule
  \end{tabular}
\end{subtable}%
\begin{subtable}[T]{.5\linewidth}
  \centering
  \scriptsize
  \begin{tabular}{>{\stepcounter{mtligne} \themtligne}cccl}
    \toprule
    \multicolumn{1}{c}{\#} & order & degree & Formula                                                       \\
    \midrule
                           & 1     & 1      & $\Im[\overline{z}_{1a} z]$                                    \\
                           & 1     & 1      & $\Im[\overline{z}_{1b} z]$                                    \\
                           & 1     & 1      & $\Im[\overline{z}_{1c} z]$                                    \\
                           & 1     & 3      & $\Im[z_{1a}^{2} \overline{z}_{3} z]$               \\
                           & 1     & 3      & $\Im[z_{1b}^{2} \overline{z}_{3} z]$               \\
                           & 1     & 3      & $\Im[z_{1c}^{2} \overline{z}_{3} z]$               \\
                           & 1     & 3      & $\Im[z_{1a} z_{1b} \overline{z}_{3} z]$ \\
                           & 1     & 3      & $\Im[z_{1a} z_{1c} \overline{z}_{3} z]$ \\
                           & 1     & 3      & $\Im[z_{1b} z_{1c} \overline{z}_{3} z]$ \\
    \midrule
                           & 2     & 2      & $\Im[z_{1a} \overline{z}_{3} z^{2}]$               \\
                           & 2     & 2      & $\Im[z_{1b} \overline{z}_{3} z^{2}]$               \\
                           & 2     & 2      & $\Im[z_{1c} \overline{z}_{3} z^{2}]$               \\
    \midrule
                           & 3     & 1      & $\Im[\overline{z}_{3} z^{3}]$                                 \\
    \bottomrule
  \end{tabular}
\end{subtable}

\end{table}

An application of theorem~\ref{thm:third-order-tensors} concerns the bidimensional piezoelectricity third-order tensor $\bP$ (also denoted $\pieze$ in the IEEE Standard on Piezoelectricity, ANSI/IEEE 176 -1987), with index symmetry $P_{ijk}=P_{ikj}$. It relates the electric displacement $\vec D$ to the stress tensor $\bsigma\in \Sym^{2}(\RR^{2})$, at vanishing electric field, as~\cite{RD1999}
\begin{equation*}
  \vec D = \bP:\bsigma,
  \qquad
  D_{i}= P_{ijk} \sigma_{jk}.
\end{equation*}
The space of 2D piezoelectricity tensors, noted $\Piez$, has the same harmonic decomposition for both $\OO(2)$ and $\SO(2)$ and writes~\cite{GW2002a} $2\HT{1} \oplus \HT{3}$. By theorem~\ref{thm:subspace-reduction}, minimal integrity bases for $\cov(\Piez,G)$ ($G=\OO(2)$ or $G=\SO(2)$) are obtained by setting $z_{1c}=0$ in theorem \ref{thm:third-order-tensors} (with $z_{1a}, z_{1b}, z_{3}$ defined as in example \ref{ex:P3-harmonic-decomposition}). Using translation formulas of section~\ref{sec:monomials-versus-tensors-invariants}, we deduce the following corollary, which completes partial results obtained by Vannucci in~\cite{Van2007}.

\begin{cor}
  A minimal integrity basis of $\cov(\Piez,\SO(2))$, where we have set $\bP= (\vv, \ww, \bH)$, with $\vv, \ww \in \HT{1}$ and $\bH\in \HT{3}$, consists in the 30 covariants (13 invariants) of~\autoref{tab:Piez-MB-SO2}. A minimal integrity basis of $\cov(\Piez,\OO(2))$ consists in the 17 covariants (8 invariants) of~\autoref{tab:Piez-MB-O2}.
\end{cor}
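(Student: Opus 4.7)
The plan is to combine theorem~\ref{thm:subspace-reduction} with theorem~\ref{thm:third-order-tensors}, and then to convert the retained complex monomials into tensorial expressions via theorems~\ref{thm:1xH}, \ref{thm:harmonic-product} and~\ref{thm:monomials-translation}.

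First, $\Piez$ is an $\OO(2)$-stable subspace of $\TT^{3}(\RR^{2})$ (its defining minor symmetry $P_{ijk}=P_{ikj}$ is $\OO(2)$-invariant), with harmonic decomposition $2\HT{1}\oplus\HT{3}$, while $\TT^{3}(\RR^{2})\simeq 3\HT{1}\oplus\HT{3}$. I would fix explicit harmonic decompositions on both sides (example~\ref{ex:T3-harmonic-decomposition} for $\TT^{3}(\RR^{2})$ and example~\ref{ex:P3-harmonic-decomposition} for $\Piez$) in such a way that the inclusion $\Piez\hookrightarrow\TT^{3}(\RR^{2})$ identifies $\Piez$ with the subspace $\set{z_{1c}=0}$; concretely this amounts to choosing the third $\HT{1}$-copy in $\TT^{3}(\RR^{2})$ to be an equivariant complement of the $\HT{1}$-isotypic component of $\Piez$. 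Then theorem~\ref{thm:subspace-reduction} applies: a minimal integrity basis for $\cov(\Piez,G)=\inv(\Piez\oplus\RR^{2},G)$ is obtained by extracting, from the multi-homogeneous basis of $\cov(\TT^{3}(\RR^{2}),G)$ produced in theorem~\ref{thm:third-order-tensors}, precisely those generators whose monomials involve neither $z_{1c}$ nor $\overline{z}_{1c}$.

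Second, I would perform this extraction by direct inspection of tables~\ref{tab:3-order-SO2-isotropic} and~\ref{tab:3-order-SO2-hemitropic}. A case-by-case sieve keeps, from table~\ref{tab:3-order-SO2-isotropic}, the entries $1,2,4,5,8,9,11,15$ (eight order-$0$ invariants), $18,19,21,22,24$ (five order-$1$ covariants), $27,28,29$ (three order-$2$ covariants) and $31$ (one order-$3$ covariant), totalling $17$ covariants with $8$ invariants; and from table~\ref{tab:3-order-SO2-hemitropic}, $5$ hemitropic invariants, $5$ order-$1$, $2$ order-$2$ and $1$ order-$3$ hemitropic covariants, totalling $13$ additional covariants with $5$ additional invariants. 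This matches the announced cardinalities $17$ covariants / $8$ invariants for $\cov(\Piez,\OO(2))$ and $30$ covariants / $13$ invariants for $\cov(\Piez,\SO(2))$.

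Third, each retained monomial has to be rewritten as a tensorial polynomial in $\vv,\ww,\bH$ and $\xx$. Starting from the correspondences $\phi(\vv)=\Re(\overline{z}_{1a}z)$, $\phi(\ww)=\Re(\overline{z}_{1b}z)$, $\phi(\bH)=\Re(\overline{z}_{3}z^{3})$, the squared moduli $|z_{k}|^{2}$ and the mixed real parts $\Re(\bm_{l})$ translate via theorem~\ref{thm:harmonic-product} and theorem~\ref{thm:monomials-translation} into iterated $r$-contractions of symmetric (primed) products of $\vv,\ww,\bH$, while the imaginary parts $\Im(\bm_{l})$ are handled by inserting the skew-symmetric contraction $\times$ with $\id$ according to theorems~\ref{thm:1xH} and~\ref{thm:monomials-translation}. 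This populates tables~\ref{tab:Piez-MB-O2} and~\ref{tab:Piez-MB-SO2}. The main obstacle is not conceptual but clerical: one has to apply the translation dictionary consistently to the thirty entries while tracking the combinatorial prefactors of formula~\eqref{eq:rcontractp} and of theorem~\ref{thm:monomials-translation}; since the underlying monomial basis was already minimal, the resulting tensorial bases are automatically minimal.
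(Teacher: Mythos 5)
Your proposal is correct and follows essentially the same route as the paper: apply theorem~\ref{thm:subspace-reduction} to $\Piez\oplus\RR^{2}\subset\TT^{3}(\RR^{2})\oplus\RR^{2}$ by setting $z_{1c}=0$ in theorem~\ref{thm:third-order-tensors}, then translate the surviving monomials into tensorial covariants via the formulas of \autoref{sec:monomials-versus-tensors-invariants}. Your case-by-case extraction and the resulting counts ($17$ covariants/$8$ invariants for $\OO(2)$, $30$/$13$ for $\SO(2)$) agree with the paper's tables, so nothing is missing.
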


\begin{table}[h]
  \caption{A minimal integrity basis for $\cov(\Piez,\SO(2))$}
  \label{tab:Piez-MB-SO2}
  \begin{subtable}[T]{.5\linewidth}
  \centering
  \scriptsize
  \begin{tabular}{cccl}
    \toprule
    \# & order & degree & Formula                                          \\
    \midrule
    1  & 0     & 2      & $\vv \cdot \vv$                                  \\
    2  & 0     & 2      & $\ww \cdot \ww$                                  \\
    3  & 0     & 2      & $\vv \cdot \ww$                                  \\
    4  & 0     & 2      & $\vv \times \ww$                                 \\
    5  & 0     & 2      & $\bH \3dots \bH$                                 \\
    6  & 0     & 4      & $(\vv \cdot \bH \cdot \vv) \cdot \vv$            \\
    7  & 0     & 4      & $(\vv \cdot \bH \cdot \vv)\cdot \ww$             \\
    8  & 0     & 4      & $(\ww \cdot \bH \cdot \ww)\cdot \ww$             \\
    9  & 0     & 4      & $(\ww \cdot \bH \cdot \ww)\cdot \vv$             \\
    10 & 0     & 4      & $(\vv\cdot  \bH \cdot \vv)\cdot (\id \times\vv)$ \\
    11 & 0     & 4      & $(\vv\cdot \bH \cdot \vv)\cdot (\id \times\ww)$  \\
    12 & 0     & 4      & $(\ww\cdot \bH \cdot \ww)\cdot (\id \times\vv)$  \\
    13 & 0     & 4      & $(\ww\cdot \bH \cdot \ww)\cdot (\id \times\ww)$  \\
    \bottomrule
  \end{tabular}
\end{subtable}%
\begin{subtable}[T]{.5\linewidth}
  \centering
  \scriptsize
  \begin{tabular}{cccl}
    \toprule
    \# & order & degree & Formula                               \\
    \midrule
    14 & 1     & 1      & $\vv$                                 \\
    15 & 1     & 1      & $\ww$                                 \\
    16 & 1     & 1      & $\id \times \vv$                      \\
    17 & 1     & 1      & $\id \times \ww$                      \\
    18 & 1     & 3      & $\vv \cdot \bH \cdot \vv$             \\
    19 & 1     & 3      & $\ww \cdot \bH \cdot \ww$             \\
    20 & 1     & 3      & $\vv \cdot \bH \cdot \ww$             \\
    21 & 1     & 3      & $\vv \cdot \bH \cdot (\id \times\vv)$ \\
    22 & 1     & 3      & $\ww \cdot \bH\cdot (\id \times\ww)$  \\
    23 & 1     & 3      & $\vv \cdot \bH \cdot (\id \times\ww)$ \\
    \midrule
    24 & 2     & 0      & $\id$                                 \\
    25 & 2     & 2      & $\bH \cdot \vv$                       \\
    26 & 2     & 2      & $\bH \cdot \ww$                       \\
    27 & 2     & 2      & $\bH \cdot (\id \times \vv)$          \\
    28 & 2     & 2      & $\bH \cdot (\id \times\ww)$           \\
    \midrule
    29 & 3     & 1      & $\bH$                                 \\
    30 & 3     & 1      & $\id \times \bH$                      \\
    \bottomrule
  \end{tabular}
\end{subtable}

\end{table}

\begin{table}[h]
  \caption{A minimal integrity basis for $\cov(\Piez,\OO(2))$}
  \label{tab:Piez-MB-O2}
  \begin{subtable}[T]{.5\linewidth}
  \centering
  \scriptsize
  \begin{tabular}{cccl}
    \toprule
    \# & order & degree & Formula                               \\
    \midrule
    1  & 0     & 2      & $\vv \cdot \vv$                       \\
    2  & 0     & 2      & $\ww \cdot \ww$                       \\
    3  & 0     & 2      & $\vv \cdot \ww$                       \\
    4  & 0     & 2      & $\bH \3dots \bH$                      \\
    5  & 0     & 4      & $(\vv \cdot \bH \cdot \vv) \cdot \vv$ \\
    6  & 0     & 4      & $(\vv \cdot \bH \cdot \vv)\cdot \ww$  \\
    7  & 0     & 4      & $(\ww \cdot \bH \cdot \ww)\cdot \ww$  \\
    8  & 0     & 4      & $(\ww \cdot \bH \cdot \ww)\cdot \vv$  \\
    \bottomrule
  \end{tabular}
\end{subtable}%
\begin{subtable}[T]{.5\linewidth}
  \centering
  \scriptsize
  \begin{tabular}{cccl}
    \toprule
    \# & order & degree & Formula                   \\
    \midrule
    9  & 1     & 1      & $\vv$                     \\
    10 & 1     & 1      & $\ww$                     \\
    11 & 1     & 3      & $\vv \cdot \bH \cdot \vv$ \\
    12 & 1     & 3      & $\ww \cdot \bH \cdot \ww$ \\
    13 & 1     & 3      & $\vv \cdot \bH \cdot \ww$ \\
    \midrule
    14 & 2     & 0      & $\id$                     \\
    15 & 2     & 2      & $\bH \cdot \vv$           \\
    16 & 2     & 2      & $\bH \cdot \ww$           \\
    \midrule
    17 & 3     & 1      & $\bH$                     \\
    \bottomrule
  \end{tabular}
\end{subtable}

\end{table}

Finally, we will complete our investigations of third-order tensors, in order to be fully exhaustive, by adding the space of totally symmetric tensors $\Sym^{3}(\RR^3)$. Its harmonic decomposition writes $\Sym^{3}(\RR^3) \simeq \HT{1}\oplus \HT{3}$ and corresponds to the subspace $\ww=0$ of $\Piez$.

\begin{cor}
  A minimal integrity basis for $\cov(\Sym^{3}(\RR^2),\SO(2))$, where we have set $\bS= (\vv, \bH)$, with $\vv \in \HT{1}$ and $\bH\in \HT{3}$, consists in the 13 covariants (4 invariants) of~\autoref{tab:S3-MB-SO2}. A minimal integrity basis for $\cov(\Sym^{3}(\RR^2),\OO(2))$ consists in the 8 covariants (3 invariants) of~\autoref{tab:S3-MB-O2}.
\end{cor}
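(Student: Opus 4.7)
The plan is to derive the corollary as a direct consequence of Theorem~\ref{thm:subspace-reduction}, applied to the inclusion $\Sym^{3}(\RR^{2}) \hookrightarrow \Piez$. Indeed, the harmonic decomposition $\Sym^{3}(\RR^{2}) \simeq \HT{1}\oplus \HT{3}$ identifies this tensor space with the stable subspace of $\Piez \simeq 2\HT{1}\oplus \HT{3}$ corresponding to $\ww = 0$ (keeping only one of the two $\HT{1}$ summands). Since $\cov(\VV,G) = \inv(\VV\oplus\RR^{2},G)$, the inclusion lifts to $\Sym^{3}(\RR^{2})\oplus\RR^{2} \hookrightarrow \Piez \oplus \RR^{2}$, to which Theorem~\ref{thm:subspace-reduction} applies verbatim.

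The key observation is that the generators listed in Tables~\ref{tab:Piez-MB-SO2} and~\ref{tab:Piez-MB-O2} are all \emph{multi-homogeneous} with respect to the decomposition $\Piez \oplus \RR^{2} \simeq \HT{1}_{\vv}\oplus\HT{1}_{\ww}\oplus\HT{3}\oplus\RR^{2}$; this is automatic since each generator arises from the real or imaginary part of a monomial $z_{1a}^{\alpha}z_{1b}^{\beta}z_{3}^{\gamma}\overline{z}_{1a}^{\alpha'}\overline{z}_{1b}^{\beta'}\overline{z}_{3}^{\gamma'}z^{\delta}\overline{z}^{\delta'}$. Theorem~\ref{thm:subspace-reduction} then guarantees that extracting from the minimal basis exactly those generators whose multi-degree in $\ww$ is zero yields a minimal integrity basis of $\cov(\Sym^{3}(\RR^{2}),G)$.

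Second, I would enumerate the survivors. In Table~\ref{tab:Piez-MB-SO2} the generators independent of $\ww$ are the invariants numbered $1, 5, 6, 10$ (namely $\vv\cdot\vv$, $\bH\3dots\bH$, $(\vv\cdot\bH\cdot\vv)\cdot\vv$, $(\vv\cdot\bH\cdot\vv)\cdot(\id\times\vv)$), together with the covariants numbered $14, 16, 18, 21$ at order $1$, $24, 25, 27$ at order $2$, and $29, 30$ at order $3$, which totals $4 + 4 + 3 + 2 = 13$ elements, matching Table~\ref{tab:S3-MB-SO2}. In Table~\ref{tab:Piez-MB-O2} the survivors are $1, 4, 5$ (order $0$), $9, 11$ (order $1$), $14, 15$ (order $2$) and $17$ (order $3$), giving $3 + 2 + 2 + 1 = 8$ elements, matching Table~\ref{tab:S3-MB-O2}.

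There is no substantive obstacle here: the corollary is purely a specialization. The only point to be careful about is that minimality is preserved precisely because the Piez basis consists of multi-homogeneous generators with respect to the finer decomposition that separates the $\vv$ and $\ww$ summands; if some basis generator had mixed degrees in $\vv$ and $\ww$, its vanishing on $\ww=0$ could a priori introduce relations. This is ruled out by the very construction of the basis through the cleaning algorithm of~\autoref{sec:algorithm}, which produces generators of well-defined multi-index $K$, so the hypothesis of Theorem~\ref{thm:subspace-reduction} is satisfied.
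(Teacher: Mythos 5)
Your proposal is correct and follows essentially the same route as the paper: the paper also treats $\Sym^{3}(\RR^{2})\simeq\HT{1}\oplus\HT{3}$ as the stable subspace $\ww=0$ of $\Piez\simeq 2\HT{1}\oplus\HT{3}$ and invokes Theorem~\ref{thm:subspace-reduction} (applied at the level of $\cov=\inv(\cdot\oplus\RR^{2},G)$, for both $G=\SO(2)$ and $G=\OO(2)$) to extract the multi-homogeneous generators independent of $\ww$. Your enumeration of the surviving generators and the resulting counts ($13$ and $8$) agrees with Tables~\ref{tab:S3-MB-SO2} and~\ref{tab:S3-MB-O2}.
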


\begin{table}[h]
  \caption{A minimal integrity basis for $\cov(\Sym^3(\RR^2),\SO(2))$}
  \label{tab:S3-MB-SO2}
  \setcounter{mtligne}{0}

\begin{subtable}[T]{.5\linewidth}
  \centering
  \scriptsize
  \begin{tabular}{>{\stepcounter{mtligne} \themtligne}cccl}
    \toprule
    \multicolumn{1}{c}{\#} & order & degree & Formula                             \\
    \midrule
      & 0     & 2      & $\vv \cdot \vv$                                  \\
      & 0     & 2      & $\bH \3dots \bH$                                 \\
      & 0     & 4      & $(\vv \cdot \bH \cdot \vv) \cdot \vv$            \\
     & 0     & 4      & $(\vv\cdot  \bH \cdot \vv)\cdot (\id \times\vv)$ \\
    \bottomrule
  \end{tabular}
\end{subtable}%
\begin{subtable}[T]{.5\linewidth}
  \centering
  \scriptsize
  \begin{tabular}{>{\stepcounter{mtligne} \themtligne}cccl}
    \toprule
    \multicolumn{1}{c}{\#} & order & degree & Formula                                           \\    
    \midrule
     & 1     & 1      & $\vv$                                 \\
     & 1     & 1      & $\id \times \vv$                      \\
     & 1     & 3      & $\vv \cdot \bH \cdot \vv$             \\
     & 1     & 3      & $\vv \cdot \bH \cdot (\id \times\vv)$ \\
    \midrule
     & 2     & 0      & $\id$                                 \\
     & 2     & 2      & $\bH \cdot \vv$                       \\
     & 2     & 2      & $\bH \cdot (\id \times \vv)$          \\
    \midrule
     & 3     & 1      & $\bH$                                 \\
     & 3     & 1      & $\id \times \bH$                      \\
    \bottomrule
  \end{tabular}
\end{subtable}

\end{table}

\begin{table}[h]
  \caption{A minimal integrity basis for $\cov(\Sym^3(\RR^2),\OO(2))$}
  \label{tab:S3-MB-O2}
  \setcounter{mtligne}{0}

\begin{subtable}[T]{.5\linewidth}
  \centering
  \scriptsize
  \begin{tabular}{>{\stepcounter{mtligne} \themtligne}cccl}
    \toprule
    \multicolumn{1}{c}{\#} & order & degree & Formula                                           \\    
     \midrule
      & 0     & 2      & $\vv \cdot \vv$                       \\
      & 0     & 2      & $\bH \3dots \bH$                      \\
      & 0     & 4      & $(\vv \cdot \bH \cdot \vv) \cdot \vv$ \\
    \bottomrule
  \end{tabular}
\end{subtable}%
\begin{subtable}[T]{.5\linewidth}
  \centering
  \scriptsize
  \begin{tabular}{>{\stepcounter{mtligne} \themtligne}cccl}
    \toprule
    \multicolumn{1}{c}{\#} & order & degree & Formula                                           \\    
     \midrule
      & 1     & 1      & $\vv$                     \\
     & 1     & 3      & $\vv \cdot \bH \cdot \vv$ \\
     \midrule
     & 2     & 0      & $\id$                     \\
     & 2     & 2      & $\bH \cdot \vv$           \\
    \midrule
     & 3     & 1      & $\bH$                     \\
    \bottomrule
  \end{tabular}
\end{subtable}

\end{table}


\subsection{Fourth-order tensors}
\label{subsec:fourth-order-tensors}

The harmonic decomposition of $\TT^{4}(\RR^{2})$ relative to $\SO(2)$ writes
\begin{equation*}
  6\HT{0} \oplus 4\HT{2} \oplus \HT{4}.
\end{equation*}
We will thus write
\begin{equation*}
  \bT = (\lambda_{1},\lambda_{2},\lambda_{3},\lambda_{4},\lambda_{5},\lambda_{6},z_{2a},z_{2b},z_{2c},z_{2d},z_{4}),
\end{equation*}
after the choice of an explicit harmonic decomposition. For $\OO(2)$, we get
\begin{equation*}
  3\HT{-1} \oplus 3\HT{0} \oplus 4\HT{2} \oplus \HT{4},
\end{equation*}
and we will have
\begin{equation*}
  \bT = (\xi_{1},\xi_{2},\xi_{3},\lambda_{1},\lambda_{2},\lambda_{3},z_{2a},z_{2b},z_{2c},z_{2d},z_{4}).
\end{equation*}
Note that in the present case, there are pseudo-scalars which reduce to additional $\HT{0}$ components when restricted to $\SO(2)$: $\xi_{1}=\lambda_{4}$, $\xi_{2}=\lambda_{5}$ and $\xi_{3}=\lambda_{6}$.

\begin{thm}\label{thm:fourth-order-tensors}
  A minimal integrity basis for $\cov(\TT^{4}(\RR^{2}),\SO(2))$ consists in the 62 covariants (43 invariants) of~\autoref{tab:4-order-SO2-isotropic} and~\autoref{tab:4-order-SO2-hemitropic}. A minimal integrity basis for $\cov(\TT^{4}(\RR^{2}),\OO(2))$ consists in the 115 covariants (78 invariants) of~\autoref{tab:4-order-SO2-isotropic} and \autoref{tab:4-order-SO2-hemitropic-products}.
\end{thm}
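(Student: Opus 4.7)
The proof follows the general strategy established in Sections~\ref{sec:integrity-bases}--\ref{sec:algorithm}, applied separately to the two harmonic decompositions of $\TT^{4}(\RR^{2})$ stated just before the theorem.

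For the $\SO(2)$ case, the plan is to invoke Theorem~\ref{thm:SO2-minimal-integrity-basis} directly. Since $\cov(\TT^4(\RR^2),\SO(2)) = \inv(\RR^2 \oplus \TT^4(\RR^2),\SO(2))$ and $\RR^2 \simeq \HT{1}$, we work with the decomposition
\[
  6\HT{0} \oplus \HT{1} \oplus 4\HT{2} \oplus \HT{4},
\]
where the $\HT{1}$ factor carries the covariant variable $z = x+iy$. The six scalars $\lambda_1,\dotsc,\lambda_6$ enter the minimal basis by Remark~\ref{rem:lambda-reduction}, and it remains to enumerate the irreducible solutions of the Diophantine equation~\eqref{eq:Diophantine-equation} in the six variables $z,z_{2a},z_{2b},z_{2c},z_{2d},z_4$ with weights $1,2,2,2,2,4$. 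These can be computed with \emph{Normaliz}, as explained in Section~\ref{sec:algorithm}. By Theorem~\ref{thm:SO2-minimal-integrity-basis}, the collection $\set{\lambda_i,\abs{z_k}^2,\Re(\bm_l),\Im(\bm_l)}$ obtained this way is already minimal, and the bookkeeping produces the $62$ entries of Tables~\ref{tab:4-order-SO2-isotropic}--\ref{tab:4-order-SO2-hemitropic}.

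For the $\OO(2)$ case, the plan is to first apply Theorem~\ref{thm:O2-integrity-basis} to the decomposition
\[
  3\HT{-1} \oplus 3\HT{0} \oplus \HT{1} \oplus 4\HT{2} \oplus \HT{4},
\]
which provides an a priori non-minimal generating set consisting of the isotropic $\SO(2)$-invariants of Table~\ref{tab:4-order-SO2-isotropic}, the six pseudo-scalar quadratic pairings $\xi_i\xi_j$, all products $\xi_i\Im(\bm_l)$, and the surviving products $\Im(\bm_p)\Im(\bm_q)$ for which neither $\bm_p\bm_q$ nor $\bm_p\overline\bm_q$ contains a factor $z_s\overline z_s$. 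By Theorem~\ref{thm:important-reduction}, minimality is equivalent to extracting a minimal subfamily from the restriction to $4\HT{2}\oplus\HT{4}$ (with the covariant $\HT{1}$ added), and then adjoining $\lambda_k$, $\xi_i\xi_j$ and $\xi_i\Im(\bm_l)$ en bloc. The cleaning algorithm of Section~\ref{sec:algorithm} is then executed: for each multi-degree $K$ in lexicographic order, the dimension $a_K$ is read from the Hilbert series (Theorem~\ref{thm:O2-Hilbert-series} and Remark~\ref{rem:betak}), the reducible products already generated at lower multi-degrees are expanded via the auxiliary Diophantine system~\eqref{eq:Dioph_Equ_Hom_MDeg_m+1}, and one augments by the minimum number of candidates from $\mathcal{B}_K$ needed to reach rank $a_K$. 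Translation from complex monomials to tensorial covariants is then carried out using Theorems~\ref{thm:1xH}, \ref{thm:harmonic-product}, and \ref{thm:monomials-translation}, yielding the $115$ entries of Tables~\ref{tab:4-order-SO2-isotropic} and~\ref{tab:4-order-SO2-hemitropic-products}.

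The main obstacle is computational rather than conceptual. The $\OO(2)$ case has three pseudo-scalars and four copies of $\HT{2}$, so the pool of admissible products $\Im(\bm_p)\Im(\bm_q)$ and $\xi_i\Im(\bm_l)$ is large, and the enumeration of irreducible solutions of~\eqref{eq:Diophantine-equation} up to the multi-degrees occurring in Table~\ref{tab:4-order-SO2-hemitropic-products} grows rapidly. The delicate step is the rank test in each multi-graded component $\mathcal{A}_K$, where one must confirm that no further reduction modulo existing generators is possible; this requires the careful computer-algebra organization described in Section~\ref{sec:algorithm}, but once it is completed the resulting tables are, by construction of the cleaning procedure, a minimal integrity basis.
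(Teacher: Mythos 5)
Your proposal is correct and follows essentially the same route as the paper: the theorem is obtained precisely by applying Theorem~\ref{thm:SO2-minimal-integrity-basis} (for $\SO(2)$, where minimality is automatic) and Theorems~\ref{thm:O2-integrity-basis} and~\ref{thm:important-reduction} followed by the cleaning algorithm of~\autoref{sec:algorithm} (for $\OO(2)$), with the covariant variable handled as an extra $\HT{1}$ factor and the final tables obtained via the translation formulas of~\autoref{sec:monomials-versus-tensors-invariants}. Your description of the computational bookkeeping (Normaliz for the Diophantine solutions, Hilbert series for the dimensions $a_K$, rank tests per multi-degree) matches the paper's stated procedure.
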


\begin{rem}
  In the case of $\OO(2)$, all products $\Im(\bm_{p})\Im(\bm_{q})$ disappear after cleaning but the products $\xi_{i} \Im(\bm_{l})$ remain, of course, and are listed in table~\autoref{tab:4-order-SO2-hemitropic-products}.
\end{rem}

\begin{table}[h]
  \caption{Isotropic covariants of $\TT^{4}(\RR^{2})$}
  \label{tab:4-order-SO2-isotropic}
  \setcounter{mtligne}{0}

\begin{subtable}[T]{.5\linewidth}
  \centering
  \scriptsize
  \begin{tabular}{>{\stepcounter{mtligne} \themtligne}cccl}
    \toprule
    \multicolumn{1}{c}{\#} & order & degree & Formula                             \\
    \midrule
                           & 0     & 1      & $\lambda_{1}$                       \\
                           & 0     & 1      & $\lambda_{2}$                       \\
                           & 0     & 1      & $\lambda_{3}$                       \\
                           & 0     & 2      & $z_{2a} \overline{z}_{2a}$          \\
                           & 0     & 2      & $z_{2b} \overline{z}_{2b}$          \\
                           & 0     & 2      & $z_{2c} \overline{z}_{2c}$          \\
                           & 0     & 2      & $z_{2d} \overline{z}_{2d}$          \\
                           & 0     & 2      & $z_{4} \overline{z}_{4}$            \\
                           & 0     & 2      & $ \Re[z_{2a} \overline{z}_{2b}]$    \\
                           & 0     & 2      & $\Re[z_{2a} \overline{z}_{2c}]$     \\
                           & 0     & 2      & $ \Re[z_{2a} \overline{z}_{2d}]$    \\
                           & 0     & 2      & $\Re[z_{2b} \overline{z}_{2c}]$     \\
                           & 0     & 2      & $ \Re[z_{2b} \overline{z}_{2d}]$    \\
                           & 0     & 2      & $\Re[z_{2c} \overline{z}_{2d}]$     \\
                           & 0     & 3      & $\Re[z_{2a}^{2} \overline{z}_{4}]$  \\
                           & 0     & 3      & $ \Re[z_{2b}^{2} \overline{z}_{4}]$ \\
                           & 0     & 3      & $ \Re[z_{2c}^{2} \overline{z}_{4}]$ \\
                           & 0     & 3      & $ \Re[z_{2d}^{2} \overline{z}_{4}]$ \\
    \bottomrule
  \end{tabular}
\end{subtable}%
\begin{subtable}[T]{.5\linewidth}
  \centering
  \scriptsize
  \begin{tabular}{>{\stepcounter{mtligne} \themtligne}cccl}
    \toprule
    \multicolumn{1}{c}{\#} & order & degree & Formula                                \\
    \midrule
                           & 0     & 3      & $ \Re[z_{2a} z_{2b} \overline{z}_{4}]$ \\
                           & 0     & 3      & $ \Re[z_{2a} z_{2c} \overline{z}_{4}]$ \\
                           & 0     & 3      & $\Re[z_{2a} z_{2d} \overline{z}_{4}]$  \\
                           & 0     & 3      & $\Re[z_{2b} z_{2c} \overline{z}_{4}]$  \\
                           & 0     & 3      & $ \Re[z_{2b} z_{2d} \overline{z}_{4}]$ \\
                           & 0     & 3      & $ \Re[z_{2c} z_{2d} \overline{z}_{4}]$ \\
    \midrule
                           & 2     & 0      & $z\overline{z}$                        \\
                           & 2     & 1      & $\Re[\overline{z}_{2a} z^{2}]$         \\
                           & 2     & 1      & $ \Re[\overline{z}_{2b} z^{2}]$        \\
                           & 2     & 1      & $ \Re[\overline{z}_{2c} z^{2}]$        \\
                           & 2     & 1      & $\Re[\overline{z}_{2d} z^{2}]$         \\
                           & 2     & 2      & $ \Re[z_{2a} \overline{z}_{4} z^{2}]$  \\
                           & 2     & 2      & $\Re[z_{2b} \overline{z}_{4} z^{2}]$   \\
                           & 2     & 2      & $ \Re[z_{2c} \overline{z}_{4} z^{2}]$  \\
                           & 2     & 2      & $ \Re[z_{2d} \overline{z}_{4} z^{2}]$  \\
    \midrule
                           & 4     & 1      & $\Re[\overline{z}_{4} z^{4}]$          \\
    \bottomrule
  \end{tabular}
\end{subtable}

\end{table}

\begin{table}[h]
  \caption{Hemitropic covariants of $\TT^{4}(\RR^{2})$}
  \label{tab:4-order-SO2-hemitropic}
  \setcounter{mtligne}{34}

\begin{subtable}[T]{.5\linewidth}
  \centering
  \scriptsize
  \begin{tabular}{>{\stepcounter{mtligne} \themtligne}cccl}
    \toprule
    \multicolumn{1}{c}{\#} & order & degree & Formula                                           \\
    \midrule
                           & 0     & 1      & $\xi_{1}=\lambda_{4}$                             \\
                           & 0     & 1      & $\xi_{2}=\lambda_{5}$                             \\
                           & 0     & 1      & $\xi_{3}=\lambda_{6}$                             \\
                           & 0     & 2      & $\Im[z_{2a} \overline{z}_{2b}]$                   \\
                           & 0     & 2      & $\Im[z_{2a} \overline{z}_{2c}]$                   \\
                           & 0     & 2      & $ \Im[z_{2a} \overline{z}_{2d}]$                  \\
                           & 0     & 2      & $\Im[z_{2b} \overline{z}_{2c}]$                   \\
                           & 0     & 2      & $ \Im[z_{2b} \overline{z}_{2d}]$                  \\
                           & 0     & 2      & $\Im[z_{2c} \overline{z}_{2d}]$                   \\
                           & 0     & 3      & $\Im[z_{2a}^{2} \overline{z}_{4}]$                \\
                           & 0     & 3      & $\Im[z_{2b}^{2} \overline{z}_{4}]$                \\
                           & 0     & 3      & $\Im[z_{2c}^{2} \overline{z}_{4}]$                \\
                           & 0     & 3      & $\Im[z_{2d}^{2} \overline{z}_{4}]$                \\
                           & 0     & 3      & $ \Im[z_{2a} z_{2b} \overline{z}_{4}]$ \\
                           & 0     & 3      & $ \Im[z_{2a} z_{2c} \overline{z}_{4}]$ \\
    \bottomrule
  \end{tabular}
\end{subtable}%
\begin{subtable}[T]{.5\linewidth}
  \centering
  \scriptsize
  \begin{tabular}{>{\stepcounter{mtligne} \themtligne}cccl}
    \toprule
    \multicolumn{1}{c}{\#} & order & degree & Formula                                           \\
    \midrule
                           & 0     & 3      & $\Im[z_{2a} z_{2d} \overline{z}_{4}]$  \\
                           & 0     & 3      & $\Im[z_{2b} z_{2c} \overline{z}_{4}]$  \\
                           & 0     & 3      & $ \Im[z_{2b} z_{2d} \overline{z}_{4}]$ \\
                           & 0     & 3      & $\Im[z_{2c} z_{2d} \overline{z}_{4}]$  \\
    \midrule
                           & 2     & 1      & $\Im[\overline{z}_{2a} z^{2}]$                    \\
                           & 2     & 1      & $ \Im[\overline{z}_{2b} z^{2}]$                   \\
                           & 2     & 1      & $ \Im[\overline{z}_{2c} z^{2}]$                   \\
                           & 2     & 1      & $\Im[\overline{z}_{2d} z^{2}]$                    \\
                           & 2     & 2      & $\Im[z_{2a} \overline{z}_{4} z^{2}]$   \\
                           & 2     & 2      & $\Im[z_{2b} \overline{z}_{4} z^{2}]$   \\
                           & 2     & 2      & $ \Im[z_{2c} \overline{z}_{4} z^{2}]$  \\
                           & 2     & 2      & $\Im[z_{2d} \overline{z}_{4} z^{2}]$   \\
    \midrule
                           & 4     & 1      & $\Im[\overline{z}_{4} z^{4}]$                     \\
    \bottomrule
  \end{tabular}
\end{subtable}

\end{table}

\begin{table}[h]
  \caption{Isotropic products of hemitropic covariants of $\TT^{4}(\RR^{2})$ ($i=1,2,3$)}
  \label{tab:4-order-SO2-hemitropic-products}
  \begin{subtable}[T]{.5\linewidth}
  \centering
  \scriptsize
  \begin{tabular}{lccl}
    \toprule
    \#         & order & degree & Formula                                                   \\
    \midrule
    35         & 0     & 2      & $\xi_{1}^{\,2}$                                         \\
    36         & 0     & 2      & $\xi_{2}^{\,2}$                                         \\
    37         & 0     & 2      & $\xi_{3}^{\,2}$                                         \\
    38         & 0     & 2      & $\xi_{1} \xi_{2}$                                         \\
    39         & 0     & 2      & $\xi_{1} \xi_{3}$                                         \\
    40         & 0     & 2      & $\xi_{2} \xi_{3}$                                         \\
    41, 42, 43 & 0     & 3      & $\xi_{i} \Im[z_{2a} \overline{z}_{2b}]$                   \\
     44, 45, 46 & 0     & 3      & $\xi_{i} \Im[z_{2a} \overline{z}_{2c}]$                   \\
    47, 48, 49 & 0     & 3      & $\xi_{i} \Im[z_{2a} \overline{z}_{2d}]$                   \\
    50, 51, 52  & 0     & 3      & $\xi_{i} \Im[z_{2b} \overline{z}_{2c}]$                   \\
   53, 54, 55 & 0     & 3      & $\xi_{i}  \Im[z_{2b} \overline{z}_{2d}]$                  \\
   56, 57, 58  & 0     & 3      & $\xi_{i}  \Im[z_{2c} \overline{z}_{2d}]$                  \\
   59, 60, 61  & 0     & 4      & $ \xi_{i} \Im[z_{2a}^{2} \overline{z}_{4}]$               \\
    62, 63, 64 & 0     & 4      & $ \xi_{i} \Im[z_{2b}^{2} \overline{z}_{4}]$               \\
    65, 66, 67 & 0     & 4      & $\xi_{i} \Im[z_{2c}^{2} \overline{z}_{4}]$                \\
     68, 69, 70& 0     & 4      & $\xi_{i}  \Im[z_{2d}^{2} \overline{z}_{4}]$               \\
    \bottomrule
  \end{tabular}
\end{subtable}%
\begin{subtable}[T]{.5\linewidth}
  \centering
  \scriptsize
  \begin{tabular}{cccl}
    \toprule
    \#            & order & degree & Formula                                                   \\
    \midrule
    71, 72, 73 & 0     & 4      & $\xi_{i}  \Im[z_{2a} z_{2b} \overline{z}_{4}]$ \\
     74, 75, 76 & 0     & 4      & $\xi_{i} \Im[z_{2a} z_{2c} \overline{z}_{4}]$  \\
     77, 78, 79  & 0     & 4      & $\xi_{i} \Im[z_{2a} z_{2d} \overline{z}_{4}]$  \\
      80, 81, 82  & 0     & 4      & $\xi_{i} \Im[z_{2b} z_{2c} \overline{z}_{4}],$ \\
       83, 84, 85   & 0     & 4      & $ \xi_{i} \Im[z_{2b} z_{2d} \overline{z}_{4}]$ \\
    86, 87, 88  & 0     & 4     & $\xi_{i} \Im[z_{2c} z_{2d} \overline{z}_{4}]$
    \\
    \midrule
     89, 90, 91   & 2     & 2      & $ \xi_{i} \Im(\overline{z}_{2a}z^2)$                            \\
     92, 93, 94   & 2     & 2      & $\xi_{i} \Im(\overline{z}_{2b}z^2)$                             \\
     95, 96, 97  & 2     & 2      & $\xi_{i} \Im(\overline{z}_{2c}z^2)$                             \\
     98, 99, 100   & 2     & 2      & $\xi_{i} \Im(\overline{z}_{2d}z^2)$                             \\
    101, 102, 103 & 2     & 3      & $\xi_{i} \Im[z_{2a} \overline{z}_{4} z^{2}]$   \\
    104, 105, 106 & 2     & 3      & $\xi_{i} \Im[z_{2b} \overline{z}_{4} z^{2}]$   \\
    107, 108, 109 & 2     & 3      & $\xi_{i} \Im[z_{2c} \overline{z}_{4} z^{2}]$   \\
    110, 111, 112 & 2     & 3      & $\xi_{i} \Im[z_{2d} \overline{z}_{4} z^{2}]$   \\
    \midrule
    113, 114, 115 & 4     & 2      & $\xi_{i} \Im(\overline{z}_{4}z^4)$                              \\
    \bottomrule
  \end{tabular}
\end{subtable}

\end{table}

\subsubsection*{Photoelasticity tensor}

The 2D photoelasticity tensor $\bPi$ \cite{CF1957}, like the 2D Eshelby tensor, has the following index symmetry $\Pi_{ijkl}=\Pi_{jikl}=\Pi_{ijlk}$. The corresponding tensor space, noted $\Gel$ in \cite{FV1997}, has the following harmonic decomposition under $\SO(2)$
\begin{equation*}
  3\HT{0} \oplus 2\HT{2} \oplus \HT{4},
\end{equation*}
and
\begin{equation*}
  \HT{-1}\oplus 2\HT{0} \oplus 2\HT{2} \oplus \HT{4},
\end{equation*}
under $\OO(2)$. It corresponds to the subspace of $\TT^4(\RR^2)$, where
\begin{equation*}
  \lambda_{1}=\lambda, \quad \lambda_{2}=\mu, \quad \xi_{1}=\xi, \quad \lambda_{3}=\xi_{2}=\xi_{3}=0, \quad z_{2c}=z_{2d}=0.
\end{equation*}
In the following corollary, we provide for it a minimal integrity basis of its covariant algebra and correct, by the way, an error in~\cite{MZC2019}. Indeed, the $\OO(2)$-integrity basis of its invariant algebra provided there, is of cardinal 10, and omits all the irreducible invariants of~\autoref{tab:Photo-SO2-hemitropic-products}.

\begin{cor}
  A minimal integrity basis for $\cov(\Gel,\SO(2)))$, where we have set
  \begin{equation*}
    \bPi= (\xi, \lambda, \mu, \bh_{1}, \bh_{2}, \bH),
  \end{equation*}
  with $\xi \in \HT{-1}$, $\lambda, \mu \in \HT{0}$, $\bh_{1}, \bh_{2} \in \HT{2}$, $\bH\in \HT{4}$, consists in the 25 covariants of~\autoref{tab:Photo-SO2-isotropic} and \autoref{tab:Photo-SO2-hemitropic} (14 invariants). A minimal integrity basis for $\cov(\Gel,\OO(2)))$ consists in the 25 covariants (14 invariants) of~\autoref{tab:Photo-SO2-isotropic} and \autoref{tab:Photo-SO2-hemitropic-products}.
\end{cor}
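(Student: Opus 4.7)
The plan is to apply Theorem~\ref{thm:subspace-reduction} to $\Gel$, viewed as a stable subspace of $\TT^{4}(\RR^{2})$, and then convert the resulting complex monomials into tensor expressions using the translation formulas of Section~\ref{sec:monomials-versus-tensors-invariants}. Because the minimal integrity basis of $\TT^{4}(\RR^{2})$ produced in Theorem~\ref{thm:fourth-order-tensors} consists only of multi-homogeneous invariants (each monomial is homogeneous in every harmonic factor), the extraction procedure of Theorem~\ref{thm:subspace-reduction} is directly applicable.

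First, I would identify $\Gel$ inside $\TT^{4}(\RR^{2})$ via the harmonic decompositions. Comparing the decomposition of $\Gel$ under $\SO(2)$, namely $3\HT{0} \oplus 2\HT{2} \oplus \HT{4}$, with that of $\TT^{4}(\RR^{2})$, which is $6\HT{0} \oplus 4\HT{2} \oplus \HT{4}$, I would realize $\Gel$ as the stable subspace defined by $\lambda_{3} = 0$, $z_{2c} = z_{2d} = 0$ (and under $\OO(2)$, additionally $\xi_{2} = \xi_{3} = 0$, keeping a single pseudo-scalar $\xi := \xi_{1}$). After this identification, the variables parametrizing $\Gel$ are $(\xi, \lambda, \mu, z_{2a}, z_{2b}, z_{4})$, with the harmonic components $(\bh_{1}, \bh_{2}, \bH)$ corresponding to $(z_{2a}, z_{2b}, z_{4})$ via $\phi(\bh_{1}) = \Re(\overline{z}_{2a}z^{2})$, $\phi(\bh_{2}) = \Re(\overline{z}_{2b}z^{2})$, $\phi(\bH) = \Re(\overline{z}_{4}z^{4})$.

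Second, I would extract from the $\TT^{4}(\RR^{2})$ minimal integrity bases (Tables~\ref{tab:4-order-SO2-isotropic}, \ref{tab:4-order-SO2-hemitropic}, and \ref{tab:4-order-SO2-hemitropic-products}) all multi-homogeneous generators that involve only the retained variables. For the $\SO(2)$ case this means discarding every monomial containing $\lambda_{3}$, $z_{2c}$, $z_{2d}$, and (for $\OO(2)$) additionally every generator involving $\xi_{2}$ or $\xi_{3}$. By Theorem~\ref{thm:subspace-reduction}, the surviving list is a minimal integrity basis for $\inv(\Gel,G)$, and applied to $\Gel\oplus\RR^{2}$ it yields a minimal basis for $\cov(\Gel,G)$ for $G=\SO(2)$ and $G=\OO(2)$.

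Third, I would translate each remaining complex monomial into a tensorial expression by invoking Theorem~\ref{thm:1xH}, Theorem~\ref{thm:harmonic-product}, and Theorem~\ref{thm:monomials-translation}, exactly as was done in example~\ref{exa:tensors-invariants-elasticity-tensor} for $\Ela$. Finally, I would tally the generators to confirm the counts of 25 covariants (14 invariants) for $\SO(2)$ and 24 covariants (13 invariants) for $\OO(2)$ appearing in the referenced tables, and cross-check the error reported in~\cite{MZC2019} by exhibiting the products $\xi\,\Im(\bm_{l})$ of Table~\ref{tab:Photo-SO2-hemitropic-products} as genuinely irreducible. The main obstacle is bookkeeping: one must carefully verify that restriction of each multi-homogeneous generator of the ambient basis yields either a generator that belongs to $\Gel$'s list or identically vanishes, and that no additional generator hidden in the $\TT^{4}$ tables has been overlooked when $\xi_{2}$ and $\xi_{3}$ are set to zero (in particular, the products $\xi_{i}\Im(\bm_{l})$ with $i=1$ survive while those with $i=2,3$ disappear).
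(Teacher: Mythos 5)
Your method is exactly the paper's: as it does explicitly for $\Piez$, the paper obtains this corollary by viewing $\Gel$ as the stable subspace of $\TT^{4}(\RR^{2})$ cut out by $\lambda_{3}=\xi_{2}=\xi_{3}=0$, $z_{2c}=z_{2d}=0$, applying theorem~\ref{thm:subspace-reduction} (to $\Gel\oplus\RR^{2}$ for covariants) to the multi-homogeneous bases of theorem~\ref{thm:fourth-order-tensors}, and then translating the surviving monomials with theorems~\ref{thm:1xH}--\ref{thm:monomials-translation}. For $\SO(2)$ your extraction does reproduce exactly the 25 covariants (14 invariants) of \autoref{tab:Photo-SO2-isotropic} and \autoref{tab:Photo-SO2-hemitropic}.

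The weak point is your final ``tally'' step in the $\OO(2)$ case. The extraction you describe also retains $\xi_{1}^{2}=\xi^{2}$ (it is entry 35 of \autoref{tab:4-order-SO2-hemitropic-products}, and its presence is likewise forced by theorem~\ref{thm:important-reduction}, which places the products $\xi_{i}\xi_{j}$ in any minimal basis), yet this generator does not appear in \autoref{tab:Photo-SO2-hemitropic-products}, and you only track the products $\xi_{i}\Im(\bm_{l})$. It cannot be discarded either: by multi-homogeneity, no polynomial in the remaining 24 listed covariants can have degree two in $\xi$ and degree zero in $(\bh_{1},\bh_{2},\bH,\xx)$, since every listed covariant of positive degree in $\xi$ has positive degree in the other variables. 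So carried out faithfully, your procedure yields 25 $\OO(2)$-covariants (14 invariants) including $\xi^{2}$, and the counts 24 (13) of the statement cannot simply be ``confirmed''; you need to flag this discrepancy (an omission of $\xi^{2}$ from the quoted table and count) rather than assert agreement with it.
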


\begin{rem}
  In \cite{MZC2019}, the authors have forgotten the 4 products $\xi_{i} \Im \bm_{l}$ of~\autoref{tab:Photo-SO2-hemitropic-products} which cannot be removed from any minimal basis of the $\OO(2)$-invariant algebra, by theorem~\ref{thm:O2-integrity-basis}.
\end{rem}

\begin{table}[h]
  \caption{Isotropic covariants of the photoelasticity tensor}
  \label{tab:Photo-SO2-isotropic}
  \setcounter{mtligne}{0}

\begin{subtable}[T]{.5\linewidth}
  \centering
  \scriptsize
  \begin{tabular}{>{\stepcounter{mtligne} \themtligne}cccll}
    \toprule
    \multicolumn{1}{c}{\#} & order & degree & Formula                                           & Formula           \\
    \midrule
                           & 0     & 1      & $\lambda$                                         & $\lambda$         \\
                           & 0     & 1      & $\mu$                                             & $\mu$             \\
                           & 0     & 2      & $z_{2a} \overline{z}_{2a}$                        & $\bh_1:\bh_1$     \\
                           & 0     & 2      & $z_{2b} \overline{z}_{2b}$                        & $\bh_2:\bh_2$     \\
                           & 0     & 2      & $z_{4} \overline{z}_{4}$                          & $\bH::\bH$        \\
                           & 0     & 2      & $ \Re[z_{2a} \overline{z}_{2b}]$                  & $\bh_1:\bh_2$     \\
                           & 0     & 3      & $\Re[z_{2a}^{2} \overline{z}_{4}]$                & $\bh_1:\bH:\bh_1$ \\
                           & 0     & 3      & $ \Re[z_{2b}^{2} \overline{z}_{4}]$               & $\bh_2:\bH:\bh_2$ \\
                           & 0     & 3      & $ \Re[z_{2a} z_{2b} \overline{z}_{4}]$ & $\bh_1:\bH:\bh_2$ \\
    \bottomrule
  \end{tabular}
\end{subtable}%
\begin{subtable}[T]{.5\linewidth}
  \centering
  \scriptsize
  \begin{tabular}{>{\stepcounter{mtligne} \themtligne}cccll}
    \toprule
    \multicolumn{1}{c}{\#} & order & degree & Formula                                          & Formula     \\
    \midrule
                           & 2     & 0      & $z\overline{z}$                                  & $\id$       \\
                           & 2     & 1      & $\Re[\overline{z}_{2a} z^{2}]$                   & $\bh_1$     \\
                           & 2     & 1      & $ \Re[\overline{z}_{2b} z^{2}]$                  & $\bh_2$     \\
                           & 2     & 2      & $ \Re[z_{2a} \overline{z}_{4} z^{2}]$ & $\bH:\bh_1$ \\
                           & 2     & 2      & $\Re[z_{2b} \overline{z}_{4} z^{2}]$  & $\bH:\bh_2$ \\
    \midrule
                           & 4     & 1      & $\Re[\overline{z}_{4} z^{4}]$                    & $\bH$       \\
    \bottomrule
  \end{tabular}
\end{subtable}

\end{table}

\begin{table}[h]
  \caption{Hemitropic covariants of the photoelasticity tensor}
  \label{tab:Photo-SO2-hemitropic}
  \setcounter{mtligne}{15}

\begin{subtable}[T]{.5\linewidth}
  \centering
  \scriptsize
  \begin{tabular}{>{\stepcounter{mtligne} \themtligne}cccll}
    \toprule
    \multicolumn{1}{c}{\#} & order & degree & Formula                                & Formula                       \\
    \midrule
                           & 0     & 1      & $\xi$                                  & $\xi$                         \\
                           & 0     & 2      & $ \Im[z_{2a} \overline{z}_{2b}]$       & $\bh_1:(\id \times\bh_2)$     \\
                           & 0     & 3      & $\Im[z_{2a}^{2} \overline{z}_{4}]$     & $\bh_1:\bH:(\id \times\bh_1)$ \\
                           & 0     & 3      & $ \Im[z_{2b}^{2} \overline{z}_{4}]$    & $\bh_2:\bH:(\id \times\bh_2)$ \\
                           & 0     & 3      & $ \Im[z_{2a} z_{2b} \overline{z}_{4}]$ & $\bh_1:\bH:(\id \times\bh_2)$ \\
    \bottomrule
  \end{tabular}
\end{subtable}%
\begin{subtable}[T]{.5\linewidth}
  \centering
  \scriptsize
  \begin{tabular}{>{\stepcounter{mtligne} \themtligne}cccll}
    \toprule
    \multicolumn{1}{c}{\#} & order & degree & Formula                               & Formula                 \\
    \midrule
                           & 2     & 1      & $\Im[\overline{z}_{2a} z^{2}]$        & $(\id \times\bh_1)$     \\
                           & 2     & 1      & $ \Im[\overline{z}_{2b} z^{2}]$       & $(\id \times\bh_2)$     \\
                           & 2     & 2      & $ \Im[z_{2a} \overline{z}_{4} z^{2}]$ & $\bH:(\id \times\bh_1)$ \\
                           & 2     & 2      & $\Im[z_{2b} \overline{z}_{4} z^{2}]$  & $\bH:(\id \times\bh_2)$ \\
    \midrule
                           & 4     & 1      & $\Im[\overline{z}_{4} z^{4}]$         & $(\id \times\bH)$       \\
    \bottomrule
  \end{tabular}
\end{subtable}

\end{table}

\begin{table}[h]
  \caption{Isotropic products of hemitropic covariants of the photoelasticity tensor}
  \label{tab:Photo-SO2-hemitropic-products}
  \setcounter{mtligne}{15}

\begin{subtable}[T]{.5\linewidth}
  \centering
  \scriptsize
  \begin{tabular}{>{\stepcounter{mtligne} \themtligne}cccll}
    \toprule
    \multicolumn{1}{c}{\#} & order & degree & Formula                                    & Formula                               \\
    \midrule
                           & $0$   & $2$    & $\xi^{2}$                                  & $\xi^{2}$                             \\
                           & 0     & 3      & $\xi \Im[z_{2a} \overline{z}_{2b}]$        & $ \xi \, \bh_1:(\id\times \bh_2)$     \\
                           & 0     & 4      & $ \xi \Im[z_{2a}^{2} \overline{z}_{4}]$    & $\xi\, \bh_1 :\bH:(\id\times \bh_1)$  \\
                           & 0     & 4      & $ \xi \Im[z_{2b}^{2} \overline{z}_{4}]$    & $\xi\, \bh_2 :\bH:(\id\times \bh_2)$  \\
                           & 0     & 4      & $\xi  \Im[z_{2a} z_{2b} \overline{z}_{4}]$ & $\xi \, \bh_1 :\bH:(\id\times \bh_2)$ \\
    \bottomrule
  \end{tabular}
\end{subtable}%
\begin{subtable}[T]{.5\linewidth}
  \centering
  \scriptsize
  \begin{tabular}{>{\stepcounter{mtligne} \themtligne}cccll}
    \toprule
    \multicolumn{1}{c}{\#} & order & degree & Formula                                  & Formula                           \\
    \midrule
                           & 2     & 2      & $ \xi \Im(\overline{z}_{2a}z^2)$         & $\xi \, \id \times \bh_1$         \\
                           & 2     & 2      & $\xi \Im(\overline{z}_{2b}z^2)$          & $\xi \, \id \times \bh_2$         \\
                           & 2     & 3      & $\xi \Im[z_{2a} \overline{z}_{4} z^{2}]$ & $\xi \,  \bH:(\id \times \bh_1)$  \\
                           & 2     & 3      & $\xi \Im[z_{2b} \overline{z}_{4} z^{2}]$ & $\xi \,   \bH:(\id \times \bh_2)$ \\
    \midrule
                           & 4     & 2      & $\xi \Im(\overline{z}_{4}z^4)$           & $\xi \,  \id \times \bH$          \\
    \bottomrule
  \end{tabular}
\end{subtable}

\end{table}

\subsubsection*{Elasticity tensor}

The 2D elasticity tensor $\bC$ has the index symmetry $C_{ijkl}=C_{jikl}=C_{ijlk}=C_{klij}$. The corresponding tensor space $\Ela$ has the following harmonic decomposition
\begin{equation*}
  2\HT{0} \oplus \HT{2}  \oplus \HT{4},
\end{equation*}
both for $\OO(2)$ and $\SO(2)$. It corresponds to the subspace of $\TT^4(\RR^2)$, where
\begin{equation*}
  \lambda_{3}=\xi_{1}=\xi_{2}=\xi_{3}=0, \qquad  z_{2b}=z_{2c}=z_{2d}=0,
\end{equation*}
and to the subspace of $\Gel$, where
\begin{equation*}
  \xi=0, \qquad \bh_{1}=\bh, \qquad \bh_{2}=0.
\end{equation*}
The following corollary completes the already known integrity basis of the invariant algebra of $\Ela$ (see examples \ref{ex:ElaSO2}, \ref{ex:ElaO2} and~\ref{exa:tensors-invariants-elasticity-tensor}) into a minimal integrity basis of its covariant algebra.

\begin{cor}
  A minimal integrity basis for $\cov(\Ela,\SO(2)))$, where we have set $\bC= (\lambda, \mu, \bh, \bH)$, with $\lambda, \mu \in \HT{0}$, $\bh\in \HT{2}$, $\bH\in \HT{4}$, consists in the 13 covariants (6 invariants) of~\autoref{tab:Ela}~(A) and \autoref{tab:Ela}~(B). A minimal integrity basis for $\cov(\Ela,\OO(2))$ consists in the 9 covariants (5 invariants) of~\autoref{tab:Ela}~(A).
\end{cor}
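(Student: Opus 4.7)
The plan is to view $\cov(\Ela, G) = \inv(\Ela \oplus \RR^{2}, G)$ and apply the general machinery developed in \autoref{sec:integrity-bases} to the representation
\begin{equation*}
  \Ela \oplus \RR^{2} \simeq 2\HT{0} \oplus \HT{2} \oplus \HT{4} \oplus \HT{1},
\end{equation*}
valid both for $\SO(2)$ and $\OO(2)$ (since $\RR^{2} \simeq \HT{1}$). We introduce complex coordinates $z_{2}=h_{11}+ih_{12}$, $z_{4}=H_{1111}+iH_{1112}$ and $z=x+iy$, with respective weights $2$, $4$ and $1$, while $\lambda$ and $\mu$ are the two trivial components of $\bC$.

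For $\SO(2)$, I would apply \autoref{thm:SO2-minimal-integrity-basis} and enumerate the irreducible non-negative solutions of the Diophantine equation
\begin{equation*}
  2(\alpha_{2}-\beta_{2})+4(\alpha_{4}-\beta_{4})+(\alpha_{z}-\beta_{z}) = 0,
\end{equation*}
which is a finite, routine task as the weights are bounded by $4$. The real and imaginary parts of the resulting irreducible monomials, together with $\lambda$, $\mu$, $\abs{z_{2}}^{2}$, $\abs{z_{4}}^{2}$ and $\abs{z}^{2}$, yield the claimed $13$ generators, whose minimality is guaranteed directly by \autoref{thm:important-reduction}. For $\OO(2)$, \autoref{thm:O2-integrity-basis} produces a generating set containing in addition the products $\Im(\bm_{p})\Im(\bm_{q})$; I would then run the cleaning algorithm of \autoref{sec:algorithm}, which tests graded piece by graded piece whether these products are reducible modulo the already accepted generators, using the dimensions of the multi-homogeneous components of $\inv(\Ela \oplus \RR^{2}, \OO(2))$ supplied by the Hilbert series.

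The main obstacle is the cleaning step: as the worked example of $\Gel$ illustrates, products $\Im(\bm_{p})\Im(\bm_{q})$ are not always reducible a priori, and their elimination must be certified by matching ranks against the Hilbert dimensions. Here the fact that $\Ela$ contains only one copy each of $\HT{2}$ and $\HT{4}$ and \emph{no pseudo-scalar component} $\HT{-1}$ is what ultimately allows every such product to be absorbed into a polynomial expression in the $\abs{z_{l}}^{2}$ and $\Re(\bm_{l})$, leaving exactly $9$ generators. Finally, every complex monomial appearing in either minimal basis is translated into a tensorial expression via \autoref{thm:1xH}, \autoref{thm:harmonic-product} and \autoref{thm:monomials-translation}; this last step is mechanical and simply extends the invariant translations already carried out in \autoref{exa:tensors-invariants-elasticity-tensor} to the covariants of positive order.
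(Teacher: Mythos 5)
Your argument is correct, but it follows a different route from the paper. You recompute $\cov(\Ela,G)=\inv(\Ela\oplus\RR^{2},G)$ from scratch: enumerate the irreducible solutions of the Diophantine equation for the weights $(1,2,4)$ (which indeed yields, up to conjugation, $\overline{z}_{2}z^{2}$, $z_{2}^{2}\overline{z}_{4}$, $z_{2}\overline{z}_{4}z^{2}$, $\overline{z}_{4}z^{4}$, hence the $13$ generators for $\SO(2)$), then for $\OO(2)$ invoke \autoref{thm:O2-integrity-basis} and the cleaning algorithm of \autoref{sec:algorithm}, and finally translate via \autoref{thm:1xH}--\autoref{thm:monomials-translation}. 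The paper instead obtains the corollary with no new computation at all: $\Ela\oplus\RR^{2}$ is a stable subspace of $\TT^{4}(\RR^{2})\oplus\RR^{2}$ (equivalently of $\Gel\oplus\RR^{2}$, with $\xi=0$, $\bh_{2}=0$), so \autoref{thm:subspace-reduction} says that extracting from the minimal basis of \autoref{thm:fourth-order-tensors} the multi-homogeneous generators depending only on $\lambda,\mu,z_{2a},z_{4},z$ is automatically a \emph{minimal} basis, minimality included. Your approach buys independence from the larger fourth-order computation; the paper's buys minimality for free once that computation is in hand. Two small points on your write-up: for $\SO(2)$, minimality is already asserted by \autoref{thm:SO2-minimal-integrity-basis} itself (\autoref{thm:important-reduction} is an $\OO(2)$ statement about re-inserting the $\lambda_{k}$, $\xi_{i}\xi_{j}$, $\xi_{i}\Im(\bm_{l})$); and the absence of $\HT{-1}$ is not what makes the $\Im(\bm_{p})\Im(\bm_{q})$ products disappear -- the viscoelasticity case $\Ela\oplus\Ela$ has no pseudo-scalar either, yet three such products survive. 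What eliminates them here is the multiplicity-one structure: every product $\bm_{p}\bm_{q}$ or $\bm_{p}\overline{\bm}_{q}$ acquires a factor $z_{s}\overline{z}_{s}$, so they are already removable by the criterion stated in \autoref{thm:O2-integrity-basis}, before any rank computation; the cleaning step then only has to certify that nothing further can be removed from the remaining nine.
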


\begin{table}[h]
  \caption{Covariants of $\Ela$}
  \label{tab:Ela}
  \setcounter{mtligne}{0}

\begin{subtable}[T]{.5\linewidth}
  \caption{Isotropic covariants}
  \centering
  \scriptsize
  \begin{tabular}{>{\stepcounter{mtligne} \themtligne}cccl}
    \toprule
    \multicolumn{1}{c}{\#} & order & degree & Formula        \\
    \midrule
                           & 0     & 1      & $\lambda$      \\
                           & 0     & 1      & $\mu$          \\
                           & 0     & 2      & $\bh:\bh$      \\
                           & 0     & 2      & $ \bH::\bH$    \\
                           & 0     & 3      & $\bh:\bH :\bh$ \\
    \midrule
                           & 2     & 0      & $\id$          \\
                           & 2     & 1      & $\bh$          \\
                           & 2     & 2      & $\bH:\bh$      \\
    \midrule
                           & 4     & 1      & $\bH$          \\
    \bottomrule
  \end{tabular}
\end{subtable}%
\begin{subtable}[T]{.5\linewidth}
  \caption{Hemitropic covariants}
  \centering
  \scriptsize
  \begin{tabular}{>{\stepcounter{mtligne} \themtligne}cccl}
    \toprule
    \multicolumn{1}{c}{\#} & order & degree & Formula                    \\
    \midrule
                           & 0     & 3      & $\bh:\bH :(\id \times\bh)$ \\
    \midrule
                           & 2     & 1      & $\id \times \bh$           \\
                           & 2     & 2      & $ \bH :(\id \times\bh)$    \\
    \midrule
                           & 4     & 1      & $\id \times \bH$           \\
    \bottomrule
  \end{tabular}
\end{subtable}%
\end{table}


\subsection{Viscoelasticity law and Hill elasto-plasticity}
\label{subsec:viscoelasticity-law}

In linear viscoelasticity, the application of a periodic strain tensor at frequency $f$, seen as the imaginary part of $\bepsilon =\bepsilon_{a} \exp(2 i \pi f t)$, generates a periodic stress tensor which is the imaginary part of $\bsigma =\bsigma_{a} \exp(i(2\pi f t+\varphi))$; $\bepsilon_{a}$ and $\bsigma_{a}$ are the assumed constant strain and stress amplitude (symmetric)
tensors and $\varphi$ is the phase shift. A frequency dependent anisotropic viscoelasticity behaviour can then be formulated as
\begin{equation}\label{eq:ViscoElasLaw}
  \bsigma = \bC^{*}:\bepsilon,
\end{equation}
where $\bC^{*}=\bC^{*}(f)=\bC_{1}+ i\, \bC_{2}$ (with $\bC_{1}, \bC_{2}\in \Ela$) is the \emph{complex viscoelasticity tensor}. For the purpose we are concerned in, we will however still consider this representation as a real representation of either $\SO(2)$ or $\OO(2)$ and represent it as the (de-complexified) vector space $\VV=\Ela \oplus \Ela$.

On the other hand, Hill elasto-plasticity constitutive equations~\cite{Hil1948} can be summarized into the linear elasticity law
\begin{equation*}
  \bsigma=\bC:(\bepsilon-\bepsilon^p),
\end{equation*}
where $\bC\in \Ela$ is the elasticity tensor, $\bepsilon^p\in \Sym^2(\RR^2)$ is the plastic strain tensor, and into the (plasticity) yield criterion
\begin{equation*}
  f = \bsigma:\bP^H:\bsigma-R(p) \leq 0,
\end{equation*}
where $\bP^H\in \Ela$ is the Hill fourth-order tensor and $R$ is the hardening function ($p$ being the so-called accumulated plastic strain). The evolution laws are obtained by generalized normality (see~\cite{LC1985}). Hill elasto-plasticity law is also represented by a pair $(\bC_1,\bC_2)$ of tensors of the elasticity type, but this time, with $\bC_1=\bC$, and $\bC_2=\bP^H$.

The harmonic decomposition of $\VV=\Ela \oplus \Ela$, which is the same for $\SO(2)$ and $\OO(2)$, is
\begin{equation*}
  4\HT{0} \oplus  2\HT{2} \oplus 2\HT{4},
\end{equation*}
and we will write
\begin{equation*}
  (\bC_{1},\bC_{2}) = (\lambda_{1},\lambda_{2},\mu_{1}, \mu_{2},z_{2a},z_{2b},z_{4a},z_{4b}) = (\lambda_{1},\lambda_{2},\mu_{1}, \mu_{2},\bh_{1},\bh_{2},\bH_{1},\bH_{2}).
\end{equation*}

\begin{thm}\label{thm:ViscoEla}
  A minimal integrity basis for $\cov(\Ela \oplus \Ela,\SO(2))$ consists in the 41 covariants (24 invariants) of~\autoref{tab:ViscoEla-SO2-isotropic} and~\autoref{tab:ViscoEla-SO2-hemitropic}. A minimal integrity basis for $\cov(\Ela \oplus \Ela,\OO(2))$ consists in the 28 covariants (17 invariants) of~\autoref{tab:ViscoEla-SO2-isotropic} and~\autoref{tab:ViscoEla-SO2-hemitropic-products}.
\end{thm}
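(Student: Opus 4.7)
The plan is to proceed in the same four-stage pattern used throughout Section 8 and Section 9. First, one starts from the harmonic decomposition
\begin{equation*}
  \Ela \oplus \Ela \simeq 4\HT{0} \oplus 2\HT{2} \oplus 2\HT{4},
\end{equation*}
which holds both for $\SO(2)$ and $\OO(2)$ (there are no $\HT{-1}$ components in $\Ela$, so nothing changes when restricting). For the covariant algebra one appends $\RR^{2}\simeq \HT{1}$, giving $4\HT{0}\oplus \HT{1}\oplus 2\HT{2}\oplus 2\HT{4}$. By Remark~\ref{rem:lambda-reduction} and Theorem~\ref{thm:important-reduction}, the four scalars $\lambda_{1}, \lambda_{2}, \mu_{1}, \mu_{2}$ enter any minimal integrity basis trivially, so the real work concerns $\HT{1}\oplus 2\HT{2}\oplus 2\HT{4}$, with complex coordinates $z$ on $\HT{1}$, $z_{2a},z_{2b}$ on $\HT{2}$, and $z_{4a},z_{4b}$ on $\HT{4}$.

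Second, I would apply Theorem~\ref{thm:SO2-minimal-integrity-basis} to the $\SO(2)$ case. By Lemma~\ref{lem:Diophantine-equation}, the invariant monomials are parameterized by the non-negative integer solutions of the single linear Diophantine equation
\begin{equation*}
  (\alpha_{0} - \beta_{0}) + 2(\alpha_{1} + \alpha_{2} - \beta_{1} - \beta_{2}) + 4(\alpha_{3} + \alpha_{4} - \beta_{3} - \beta_{4}) = 0,
\end{equation*}
in the exponents of $z,z_{2a},z_{2b},z_{4a},z_{4b}$ and their conjugates. I would feed this equation into \emph{Normaliz} (as indicated in Section~\ref{sec:algorithm}) to enumerate the finitely many irreducible solutions; the real and imaginary parts of the resulting irreducible monomials, together with the squared moduli $\abs{z_{k}}^{2}$ and the four scalars, directly give the 41 covariants of Tables~\ref{tab:ViscoEla-SO2-isotropic}--\ref{tab:ViscoEla-SO2-hemitropic} by Theorem~\ref{thm:SO2-minimal-integrity-basis}.

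Third, for the $\OO(2)$ case I would apply Theorem~\ref{thm:O2-integrity-basis} to obtain a (generally non-minimal) integrity basis built from the $\SO(2)$ one by adjoining the products $\Im(\bm_{p})\Im(\bm_{q})$ and immediately discarding those for which $\bm_{p}\bm_{q}$ or $\bm_{p}\overline{\bm}_{q}$ carries a factor $z_{s}\overline{z}_{s}$. The remaining set must then be fed through the cleaning algorithm of Section~\ref{sec:algorithm}: one computes the dimensions $a_{K}$ of each multi-homogeneous component from the Hilbert series (Theorem~\ref{thm:O2-Hilbert-series}) and, in lexicographic order on the multi-index $K=(k_{0}, k_{1}, \dotsc, k_{4})$, prunes any generator whose removal does not drop the rank. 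The claim that the output has cardinality 28 amounts to the assertion that in this specific representation every surviving $\Im(\bm_{p})\Im(\bm_{q})$ is reducible modulo the other generators -- this is the opposite behaviour to the $\Sym^{12}$ or piezoelectricity cases, and it is an \emph{a posteriori} fact of the linear algebra in each $\mathcal{A}_{K}$.

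Finally, translation into tensor form via Theorems~\ref{thm:1xH}--\ref{thm:monomials-translation} converts each monomial invariant into the explicit combination of symmetric products, contractions, and $\times$ operations applied to $\bh_{1}, \bh_{2}, \bH_{1}, \bH_{2}$ recorded in the tables. The main obstacle is the combinatorial size of the cleaning step: with two $\HT{2}$ copies and two $\HT{4}$ copies, the list of monomials produced by Theorems~\ref{thm:SO2-minimal-integrity-basis} and~\ref{thm:O2-integrity-basis} is substantially larger than in the $\Ela$ or $\Gel$ cases, and verifying, multi-degree by multi-degree, that the 28 claimed generators span every $\mathcal{A}_{K}$ while no proper subset does requires a careful computer-algebra enumeration rather than a purely conceptual argument.
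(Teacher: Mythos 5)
Your overall strategy is exactly the one the paper uses for this theorem (which is proved by computation rather than by a written argument): harmonic decomposition $\Ela\oplus\Ela\simeq 4\HT{0}\oplus 2\HT{2}\oplus 2\HT{4}$, adjunction of $\RR^{2}\simeq\HT{1}$ for covariants, theorem~\ref{thm:SO2-minimal-integrity-basis} with the irreducible solutions of the Diophantine equation for the $\SO(2)$ case, theorem~\ref{thm:O2-integrity-basis} followed by the cleaning algorithm of~\autoref{sec:algorithm} for the $\OO(2)$ case, and finally the translation theorems of~\autoref{sec:monomials-versus-tensors-invariants}. Your count of 41 for $\SO(2)$ and your reduction to $\HT{1}\oplus 2\HT{2}\oplus 2\HT{4}$ modulo the four scalars are correct.

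However, your description of the outcome of the cleaning step for $\OO(2)$ is wrong, and it contradicts the very statement you are proving. You assert that the cardinality 28 ``amounts to the assertion that every surviving $\Im(\bm_{p})\Im(\bm_{q})$ is reducible modulo the other generators'', i.e.\ that all such products are eliminated, in contrast with the $\Sym^{12}$ and piezoelectricity cases. In fact the opposite happens here: the minimal $\OO(2)$ basis consists of the 25 covariants of~\autoref{tab:ViscoEla-SO2-isotropic} \emph{plus} the 3 products $\Im(\bm_{p})\Im(\bm_{q})$ of~\autoref{tab:ViscoEla-SO2-hemitropic-products}, namely $\Im[z_{2a}\overline{z}_{2b}]\,\Im[z_{4a}\overline{z}_{4b}]$ and the two order-two products $\Im[z_{4a}\overline{z}_{4b}]\,\Im[\overline{z}_{2a}z^{2}]$, $\Im[z_{4a}\overline{z}_{4b}]\,\Im[\overline{z}_{2b}z^{2}]$, which the cleaning algorithm shows cannot be removed (this is emphasized in the remark following the theorem). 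Your version would yield only 25 generators, not 28, so the arithmetic already signals the inconsistency; and since there is no $\HT{-1}$ component here, there are no products $\xi_{i}\Im(\bm_{l})$ to make up the difference, unlike the $\TT^{4}(\RR^{2})$ case. The case $\Ela\oplus\Ela$ thus behaves like $\Sym^{12}(\RR^{2})$ and the piezoelectricity law, not opposite to them: the survival of some $\Im\Im$ products is precisely the phenomenon the paper highlights, and establishing that these three are irreducible (by the rank computations against the Hilbert-series dimensions $a_{K}$) is an essential part of the minimality claim that your proposal, as written, would miss.
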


\begin{rem}
  Note that, in the minimal covariant basis for $\cov(\Ela \oplus \Ela,\OO(2))$, it remains 3 products $\Im(\bm_{p})\Im(\bm_{q})$ (see~\autoref{tab:ViscoEla-SO2-hemitropic-products}), which cannot be eliminated after cleaning.
\end{rem}

\begin{table}[h]
  \caption{Isotropic covariants of $\VV=\Ela \oplus \Ela$}
  \label{tab:ViscoEla-SO2-isotropic}
  \setcounter{mtligne}{0}

\begin{subtable}[T]{.5\linewidth}
  \centering
  \scriptsize
  \begin{tabular}{>{\stepcounter{mtligne} \themtligne}cccll}
    \toprule
    \multicolumn{1}{c}{\#} & order & degree & Formula                                & Formula                     \\
    \midrule
                           & 0     & 1      & $\lambda_1$                            & $\lambda_1$                 \\
                           & 0     & 1      & $\mu_1$                                & $\mu_1$                     \\
                           & 0     & 1      & $\lambda_2$                            & $\lambda_2$                 \\
                           & 0     & 1      & $\mu_2$                                & $\mu_2$                     \\
                           & 0     & 2      & $ z_{2a} \overline{z}_{2a}$            & $\bh_{1}:\bh_{1}$           \\
                           & 0     & 2      & $z_{2b} \overline{z}_{2b}$             & $\bh_{2}:\bh_{2}$           \\
                           & 0     & 2      & $z_{4a} \overline{z}_{4a}$             & $ \bH_{1}::\bH_{1}$         \\
                           & 0     & 2      & $z_{4b} \overline{z}_{4b}$             & $\bH_{2}::\bH_{2}$          \\
                           & 0     & 2      & $ \Re[z_{2a} \overline{z}_{2b}]$       & $ \bh_{1}:\bh_{2} $         \\
                           & 0     & 2      & $ \Re[z_{4a} \overline{z}_{4b}]$       & $\bH_{1}::\bH_{2}$          \\
                           & 0     & 3      & $ \Re[z_{2a}^{2} \overline{z}_{4a}]$   & $ \bh_{1}:\bH_{1} :\bh_{1}$ \\
                           & 0     & 3      & $ \Re[z_{2b}^{2} \overline{z}_{4a}]$   & $\bh_{2}:\bH_{1} :\bh_{2}$  \\
                           & 0     & 3      & $\Re[z_{2a}^{2} \overline{z}_{4b}]$    & $\bh_{1}:\bH_{2} :\bh_{1}$  \\
                           & 0     & 3      & $\Re[z_{2b}^{2} \overline{z}_{4b}]$    & $\bh_{2}:\bH_{2} :\bh_{2}$  \\
                           & 0     & 3      & $\Re[z_{2a} z_{2b} \overline{z}_{4a}]$ & $\bh_{1}:\bH_{1} :\bh_{2}$  \\
                           & 0     & 3      & $\Re[z_{2a} z_{2b} \overline{z}_{4b}]$ & $\bh_{1}:\bH_{2} :\bh_{2}$  \\
    \bottomrule
  \end{tabular}
\end{subtable}%
\begin{subtable}[T]{.5\linewidth}
  \centering
  \scriptsize
  \begin{tabular}{>{\stepcounter{mtligne} \themtligne}cccll}
    \toprule
    \multicolumn{1}{c}{\#} & order & degree & Formula                                & Formula           \\
    \midrule
                           & 2     & 0      & $z \overline{z}$                       & $\id$             \\
                           & 2     & 1      & $\Re[\overline{z}_{2a} z^{2}]$         & $\bh_{1}$         \\
                           & 2     & 1      & $ \Re[\overline{z}_{2b} z^{2}]$        & $ \bh_{2}$        \\
                           & 2     & 2      & $ \Re[z_{2a} \overline{z}_{4a} z^{2}]$ & $\bH_{1}:\bh_{1}$ \\
                           & 2     & 2      & $\Re[z_{2a} \overline{z}_{4b} z^{2}]$  & $\bH_{2}:\bh_{1}$ \\
                           & 2     & 2      & $\Re[z_{2b} \overline{z}_{4a} z^{2}]$  & $\bH_{1}:\bh_{2}$ \\
                           & 2     & 2      & $ \Re[z_{2b} \overline{z}_{4b} z^{2}]$ & $\bH_{2}:\bh_{2}$ \\
    \midrule
                           & 4     & 1      & $\Re[\overline{z}_{4a} z^{4}]$         & $\bH_1$           \\
                           & 4     & 1      & $\Re[\overline{z}_{4b} z^{4}]$         & $\bH_2$           \\
    \bottomrule
  \end{tabular}
\end{subtable}

\end{table}

\begin{table}[h]
  \caption{Hemitropic covariants of $\VV=\Ela \oplus \Ela$}
  \label{tab:ViscoEla-SO2-hemitropic}
  \setcounter{mtligne}{25}

\begin{subtable}[T]{.5\linewidth}
  \centering
  \scriptsize
  \begin{tabular}{>{\stepcounter{mtligne} \themtligne}cccll}
    \toprule
    \multicolumn{1}{c}{\#} & order & degree & Formula                                & Formula                            \\
    \midrule
                           & 0     & 2      & $\Im[z_{2a} \overline{z}_{2b}]$        & $\bh_1 : (\id \times \bh_2)$       \\
                           & 0     & 2      & $ \Im[z_{4a} \overline{z}_{4b}]$       & $\bH_1 : (\id \times \bH_2)$       \\
                           & 0     & 3      & $\Im[z_{2a}^{2} \overline{z}_{4a}]$    & $\bh_1:\bH_1 : (\id \times \bh_1)$ \\
                           & 0     & 3      & $ \Im[z_{2b}^{2} \overline{z}_{4a}]$   & $\bh_2:\bH_1 : (\id \times \bh_2)$ \\
                           & 0     & 3      & $ \Im[z_{2a}^{2} \overline{z}_{4b}]$   & $\bh_1:\bH_2 : (\id \times \bh_1)$ \\
                           & 0     & 3      & $ \Im[z_{2b}^{2} \overline{z}_{4b}]$   & $\bh_2:\bH_2 : (\id \times \bh_2)$ \\
                           & 0     & 3      & $\Im[z_{2a} z_{2b} \overline{z}_{4a}]$ & $\bh_1:\bH_1 : (\id \times \bh_2)$ \\
                           & 0     & 3      & $\Im[z_{2a} z_{2b} \overline{z}_{4b}]$ & $\bh_1:\bH_2 : (\id \times \bh_2)$ \\
    \bottomrule
  \end{tabular}
\end{subtable}%
\begin{subtable}[T]{.5\linewidth}
  \centering
  \scriptsize
  \begin{tabular}{>{\stepcounter{mtligne} \themtligne}cccll}
    \toprule
    \multicolumn{1}{c}{\#} & order & degree & Formula                                & Formula                      \\
    \midrule
                           & 2     & 1      & $\Im[\overline{z}_{2a} z^{2}]$         & $\id \times \bh_1$           \\
                           & 2     & 1      & $\Im[\overline{z}_{2b} z^{2}]$         & $\id \times \bh_2$           \\
                           & 2     & 2      & $\Im[z_{2a} \overline{z}_{4a} z^{2}]$  & $\bH_1 : (\id \times \bh_1)$ \\
                           & 2     & 2      & $\Im[z_{2a} \overline{z}_{4b} z^{2}]$  & $\bH_2 : (\id \times \bh_1)$ \\
                           & 2     & 2      & $ \Im[z_{2b} \overline{z}_{4a} z^{2}]$ & $\bH_1 : (\id \times \bh_2)$ \\
                           & 2     & 2      & $\Im[z_{2b} \overline{z}_{4b} z^{2}]$  & $\bH_2 : (\id \times \bh_2)$ \\
    \midrule
                           & 4     & 1      & $\Im[\overline{z}_{4a} z^{4}]$         & $\id \times \bH_1$           \\
                           & 4     & 1      & $\Im[\overline{z}_{4b} z^{4}]$         & $\id \times \bH_2$           \\
    \bottomrule
  \end{tabular}
\end{subtable}

\end{table}

\begin{table}[h]
  \caption{Isotropic products of hemitropic covariants of $\VV=\Ela \oplus \Ela$}
  \label{tab:ViscoEla-SO2-hemitropic-products}
  \setcounter{mtligne}{25}
\centering
\scriptsize
\begin{tabular}{>{\stepcounter{mtligne} \themtligne}cccll}
  \toprule
  \multicolumn{1}{c}{\#} & order & degree & Formula                                                       & Formula                                                                \\
  \midrule
                         & 0     & 4      & $\Im[z_{2a} \overline{z}_{2b}] \Im[z_{4a} \overline{z}_{4b}]$ & $\left(\bh_{1}:(\id\times\bh_{2}))(\bH_{1}::(\id\times\bH_{2})\right)$ \\
  \midrule
                         & 2     & 3      & $ \Im[z_{4a} \overline{z}_{4b}] \Im[\overline{z}_{2a} z^{2}]$ & $\left(\bH_{1}::(\id\times\bH_{2})\right)  (\id \times \bh_{1})$       \\
                         & 2     & 3      & $ \Im[z_{4a} \overline{z}_{4b}] \Im[\overline{z}_{2b} z^{2}]$ & $\left(\bH_{1}::(\id\times\bH_{2})\right)  (\id \times \bh_{2})$       \\
  \bottomrule
\end{tabular}

\end{table}


\subsection{Piezoelectricity law}
\label{subsec:piezoelectricity-law}

The linear piezoelectricity law~\cite{RD1999} is a linear relation between the (symmetric) second-order strain tensor $\bepsilon$, the (symmetric) second-order stress tensor $\bsigma$, the electric field $\EE$, and the electric displacement $\vec D$. It writes
\begin{equation*}
  \begin{cases}
    \bsigma= \bC:\bepsilon-  \EE \cdot \bP,
    \\
    \vec D=\bP:\bsigma + \pmb\varepsilon_0^{\sigma} \cdot \EE,
  \end{cases}
  \qquad
  \begin{cases}
    \sigma_{ij}= C_{ijkl}\epsilon_{kl} - P_{kij} E_{k},
    \\
    D_{i}=P_{ikl}\sigma_{kl}+ \varepsilon_{0 ik}^\sigma E_{k},
  \end{cases}
\end{equation*}
where $\bC\in \Ela$ is the elasticity fourth-order tensor ($C_{ijkl}=C_{jikl}=C_{klij}$), $\bP\in \Piez$ is the piezoelectricity third-order tensor ($P_{kij}=P_{kji}$), and $\pmb\varepsilon_0^{\sigma}$ is the (symmetric) second-order dielectric permittivity tensor (which should not be confused with the Levi-Civita tensor $\pmb\varepsilon$ or the strain tensor $\bepsilon$). The harmonic decomposition of the constitutive tensor $(\bC,\bP,\pmb\varepsilon_0^{\sigma})$, with respect to either $\SO(2)$ or $\OO(2)$ writes as
\begin{equation*}
  3\HT{0} \oplus 2\HT{1} \oplus 2\HT{2} \oplus \HT{3} \oplus \HT{4},
\end{equation*}
and we will write
\begin{equation*}
  (\bC,\bP,\pmb\varepsilon_0^{\sigma} ) = (\lambda_{1},\lambda_{2},\lambda_{3},z_{1a},z_{1b},z_{2a},z_{2b},z_{3},z_{4}).
\end{equation*}

\begin{thm}\label{thm:FullPiezo}
  A minimal integrity basis for the $\SO(2)$-covariant algebra for the triplet $(\bC, \bP, \pmb\varepsilon_0^{\sigma})$ consists in the 206 covariants (121 invariants) of~\autoref{tab:FullPiezo-SO2-isotropic} and~\autoref{tab:FullPiezo-SO2-hemitropic}. A minimal integrity basis for the $\OO(2)$-covariant algebra for the triplet $(\bC, \bP, \pmb\varepsilon_0^{\sigma})$ consists in the 123 covariants (71 invariants) of~\autoref{tab:FullPiezo-SO2-isotropic} and \autoref{tab:FullPiezo-SO2-hemitropic-products}.
\end{thm}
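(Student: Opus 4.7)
The plan is to apply the general framework developed in Sections~\ref{sec:integrity-bases}--\ref{sec:monomials-versus-tensors-invariants} to the triplet $(\bC, \bP, \pmb\varepsilon_0^{\sigma})$, whose harmonic decomposition under both $\SO(2)$ and $\OO(2)$ is
\[
  \VV \simeq 3\HT{0} \oplus 2\HT{1} \oplus 2\HT{2} \oplus \HT{3} \oplus \HT{4}.
\]
Since $\cov(\VV, G) = \inv(\VV \oplus \RR^{2}, G)$ and Theorem~\ref{thm:important-reduction} lets me append the scalars $\lambda_{1}, \lambda_{2}, \lambda_{3}$ at the end, the real work concerns the reduced representation $\WW := 2\HT{1} \oplus 2\HT{2} \oplus \HT{3} \oplus \HT{4} \oplus \RR^{2}$, with complex variables $z_{1a}, z_{1b}, z_{2a}, z_{2b}, z_{3}, z_{4}$ for the harmonic factors and $z$ for the $\RR^{2}$ factor. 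For $\SO(2)$, Theorem~\ref{thm:SO2-minimal-integrity-basis} reduces the problem to enumerating the irreducible solutions of the linear Diophantine equation
\[
  \alpha_{1a} + \alpha_{1b} + 2\alpha_{2a} + 2\alpha_{2b} + 3\alpha_{3} + 4\alpha_{4} + \alpha_{0} = \beta_{1a} + \beta_{1b} + 2\beta_{2a} + 2\beta_{2b} + 3\beta_{3} + 4\beta_{4} + \beta_{0},
\]
which I would delegate to \emph{Normaliz}. Taking real and imaginary parts of the resulting monomials (discarding those equal to their conjugate) yields directly the 206 covariants listed in Tables~\ref{tab:FullPiezo-SO2-isotropic} and~\ref{tab:FullPiezo-SO2-hemitropic}, and minimality is immediate from Theorem~\ref{thm:SO2-minimal-integrity-basis}.

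For $\OO(2)$, Theorem~\ref{thm:O2-integrity-basis} provides an a priori generating set built from $\abs{z_{l}}^{2}$, $\Re(\bm_{l})$ and the products $\Im(\bm_{p})\Im(\bm_{q})$ (after the explicit $z_{s}\overline{z}_{s}$ simplifications); the blocks $\xi_{i}\xi_{j}$ and $\xi_{i}\Im(\bm_{l})$ are empty here since no $\HT{-1}$ appears in $\VV$. This set is in general non-minimal, so I would run the cleaning algorithm of Section~\ref{sec:algorithm}: for each multi-degree $K$ (processed in lexicographic order), compute the a priori dimension $a_{K}$ of $\mathcal{A}_{K}$ via the Hilbert series (Theorem~\ref{thm:O2-Hilbert-series}), enumerate the reducible invariants of multi-degree $K$ constructible from generators already selected, and add minimally many new candidates from the generating set to reach rank $a_{K}$. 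The output is the 123-element minimal basis of Tables~\ref{tab:FullPiezo-SO2-isotropic} and~\ref{tab:FullPiezo-SO2-hemitropic-products}, the non-emptiness of the second table reflecting the fact that some products $\Im(\bm_{p})\Im(\bm_{q})$ are genuinely irreducible. The tensorial translations of each complex monomial then follow from Theorems~\ref{thm:1xH}, \ref{thm:harmonic-product} and~\ref{thm:monomials-translation}.

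The main obstacle is computational rather than conceptual: with six non-trivial harmonic factors (including multiplicities) plus the covariant variable, the Diophantine equation admits several hundred irreducible solutions, and the cleaning step must traverse a large lattice of multi-degrees while performing rank computations in progressively larger spaces $\mathcal{A}_{K}$. This forces reliance on the \emph{Normaliz}/\emph{Mathematica} pipeline described in Section~\ref{sec:algorithm}. A practical point that controls the cost is the choice of ordering on multi-indices: processing lower multi-degrees first maximizes the reducible set $\mathcal{R}_{K_{n}}$ at each step, thereby minimizing the number of genuinely irreducible candidates that must be appended to $\mathcal{F}^{n-1}$.
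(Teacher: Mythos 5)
Your proposal is correct and follows essentially the same route as the paper: an explicit harmonic decomposition $3\HT{0}\oplus 2\HT{1}\oplus 2\HT{2}\oplus \HT{3}\oplus \HT{4}$, treatment of the covariant algebra as $\inv(\VV\oplus\RR^{2},G)$, \autoref{thm:SO2-minimal-integrity-basis} with the irreducible Diophantine solutions (computed by \emph{Normaliz}) for the $\SO(2)$ case, and \autoref{thm:O2-integrity-basis} together with \autoref{thm:important-reduction} and the cleaning algorithm of \autoref{sec:algorithm} (with Hilbert-series dimensions) for the $\OO(2)$ case, followed by the translation formulas of \autoref{sec:monomials-versus-tensors-invariants}. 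Your observation that the $\xi_{i}$-blocks are empty (no $\HT{-1}$ factor) while some products $\Im(\bm_{p})\Im(\bm_{q})$ survive cleaning matches the paper's tables exactly.
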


\begin{rem}
  In this case also, it remains many products $\Im(\bm_{p})\Im(\bm_{q})$ (see~\autoref{tab:FullPiezo-SO2-hemitropic-products}) which cannot be eliminated from the $\OO(2)$-integrity basis .
\end{rem}

\begin{table}[h]
  \caption{Isotropic covariants for the triplet $(\bC, \bP, \pmb\varepsilon_0^{\sigma})$}
  \label{tab:FullPiezo-SO2-isotropic}
  \setcounter{mtligne}{0}

\begin{subtable}[T]{.5\linewidth}
  \centering
  \scriptsize
  \begin{tabular}{>{\stepcounter{mtligne} \themtligne}cccl}
    \toprule
    \multicolumn{1}{c}{\#} & order & degree & Formula                                                  \\
    \midrule
                           & 0     & 1      & $\lambda_{1}$                                            \\
                           & 0     & 1      & $\lambda_{2}$                                            \\
                           & 0     & 1      & $\lambda_3$                                              \\
                           & 0     & 2      & $z_{1a} \overline{z}_{1a}$                               \\
                           & 0     & 2      & $z_{1b} \overline{z}_{1b}$                               \\
                           & 0     & 2      & $z_{2a} \overline{z}_{2a}$                               \\
                           & 0     & 2      & $z_{2b} \overline{z}_{2b}$                               \\
                           & 0     & 2      & $z_{3} \overline{z}_{3}$                                 \\
                           & 0     & 2      & $z_{4} \overline{z}_{4}$                                 \\
                           & 0     & 2      & $\Re[z_{1a} \overline{z}_{1b}]$                          \\
                           & 0     & 2      & $ \Re[z_{2a} \overline{z}_{2b}]$                         \\
                           & 0     & 3      & $ \Re[z_{1a}^{2} \overline{z}_{2a}]$                     \\
                           & 0     & 3      & $\Re[z_{1a}^{2} \overline{z}_{2b}]$                      \\
                           & 0     & 3      & $ \Re[z_{1b}^{2} \overline{z}_{2a}]$                     \\
                           & 0     & 3      & $\Re[z_{1b}^{2} \overline{z}_{2b}]$                      \\
                           & 0     & 3      & $ \Re[z_{2a}^{2} \overline{z}_{4}]$                      \\
                           & 0     & 3      & $ \Re[z_{2b}^{2} \overline{z}_{4}]$                      \\
                           & 0     & 3      & $\Re[z_{1a} z_{1b} \overline{z}_{2a}]$                   \\
                           & 0     & 3      & $ \Re[z_{1a} z_{1b} \overline{z}_{2b}]$                  \\
                           & 0     & 3      & $\Re[z_{1a} z_{2a} \overline{z}_{3}]$                    \\
                           & 0     & 3      & $\Re[z_{1b} z_{2a} \overline{z}_{3}]$                    \\
                           & 0     & 3      & $ \Re[z_{1b} z_{2b} \overline{z}_{3}]$                   \\
                           & 0     & 3      & $ \Re[z_{1a} z_{2b} \overline{z}_{3}]$                   \\
                           & 0     & 3      & $\Re[z_{2a} z_{2b} \overline{z}_{4}]$                    \\
                           & 0     & 3      & $\Re[z_{1a} z_{3} \overline{z}_{4}]$                     \\
                           & 0     & 3      & $ \Re[z_{1b} z_{3} \overline{z}_{4}]$                    \\
                           & 0     & 4      & $\Re[z_{1a}^{3} \overline{z}_{3}]$                       \\
                           & 0     & 4      & $\Re[z_{1a}^{2} z_{1b} \overline{z}_{3}]$                \\
                           & 0     & 4      & $ \Re[z_{1a} z_{1b}^{2} \overline{z}_{3}]$               \\
                           & 0     & 4      & $ \Re[z_{1b}^{3} \overline{z}_{3}]$                      \\
                           & 0     & 4      & $ \Re[z_{1a} \overline{z}_{2a}^{2} z_{3}]$               \\
                           & 0     & 4      & $\Re[z_{1b} \overline{z}_{2a}^{2} z_{3}]$                \\
                           & 0     & 4      & $\Re[z_{1a} \overline{z}_{2a} \overline{z}_{2b} z_{3}]$  \\
                           & 0     & 4      & $ \Re[z_{1b} \overline{z}_{2a} \overline{z}_{2b} z_{3}]$ \\
                           & 0     & 4      & $ \Re[z_{1a} \overline{z}_{2b}^{2} z_{3}]$               \\
                           & 0     & 4      & $\Re[z_{1b} \overline{z}_{2b}^{2} z_{3}]$                \\
                           & 0     & 4      & $\Re[z_{1a}^{2} z_{2a} \overline{z}_{4}]$                \\
                           & 0     & 4      & $\Re[z_{1a} z_{1b} z_{2a} \overline{z}_{4}]$             \\
                           & 0     & 4      & $ \Re[z_{1b}^{2} z_{2a} \overline{z}_{4}]$               \\
                           & 0     & 4      & $ \Re[z_{1a}^{2} z_{2b} \overline{z}_{4}]$               \\
                           & 0     & 4      & $ \Re[z_{1a} z_{1b} z_{2b} \overline{z}_{4}]$            \\
                           & 0     & 4      & $\Re[z_{1b}^{2} z_{2b} \overline{z}_{4}]$                \\
                           & 0     & 4      & $\Re[z_{1a} \overline{z}_{2a} \overline{z}_{3} z_{4}]$   \\
                           & 0     & 4      & $ \Re[z_{1b} \overline{z}_{2a} \overline{z}_{3} z_{4}]$  \\
                           & 0     & 4      & $\Re[z_{1a} \overline{z}_{2b} \overline{z}_{3} z_{4}]$   \\
                           & 0     & 4      & $ \Re[z_{1b} \overline{z}_{2b} \overline{z}_{3} z_{4}]$  \\
                           & 0     & 4      & $\Re[z_{2a} \overline{z}_{3}^{2} z_{4}]$                 \\
                           & 0     & 4      & $\Re[z_{2b} \overline{z}_{3}^{2} z_{4}]$                 \\
                           & 0     & 5      & $ \Re[z_{2a}^{3} \overline{z}_{3}^{2}]$                  \\
                           & 0     & 5      & $\Re[z_{2a}^{2} z_{2b} \overline{z}_{3}^{2}]$            \\
                           & 0     & 5      & $ \Re[z_{2a} z_{2b}^{2} \overline{z}_{3}^{2}]$           \\
                           & 0     & 5      & $\Re[z_{2b}^{3} \overline{z}_{3}^{2}]$                   \\
                           & 0     & 5      & $ \Re[z_{1a}^{4} \overline{z}_{4}]$                      \\
                           & 0     & 5      & $ \Re[z_{1a}^{3} z_{1b} \overline{z}_{4}]$               \\
                           & 0     & 5      & $ \Re[z_{1a}^{2} z_{1b}^{2} \overline{z}_{4}]$           \\
    \bottomrule
  \end{tabular}
\end{subtable}%
\begin{subtable}[T]{.5\linewidth}
  \centering
  \scriptsize
  \begin{tabular}{>{\stepcounter{mtligne} \themtligne}cccl}
    \toprule
    \multicolumn{1}{c}{\#} & order & degree & Formula                                            \\
    \midrule
                           & 0     & 5      & $ \Re[z_{1a} z_{1b}^{3} \overline{z}_{4}]$         \\
                           & 0     & 5      & $\Re[z_{1b}^{4} \overline{z}_{4}]$                 \\
                           & 0     & 5      & $\Re[z_{2a} z_{3}^{2} \overline{z}_{4}^{2}]$       \\
                           & 0     & 5      & $\Re[z_{2b} z_{3}^{2} \overline{z}_{4}^{2}]$       \\
                           & 0     & 5      & $ \Re[z_{1a}^{2} \overline{z}_{3}^{2} z_{4}]$      \\
                           & 0     & 5      & $\Re[z_{1a} z_{1b} \overline{z}_{3}^{2} z_{4}]$    \\
                           & 0     & 5      & $\Re[z_{1b}^{2} \overline{z}_{3}^{2} z_{4}]$       \\
                           & 0     & 6      & $\Re[z_{1a} \overline{z}_{3}^{3} z_{4}^{2}]$       \\
                           & 0     & 6      & $ \Re[z_{1b} \overline{z}_{3}^{3} z_{4}^{2}]$      \\
                           & 0     & 7      & $\Re[z_{3}^{4} \overline{z}_{4}^{3}]$              \\
    \midrule
                           & 1     & 1      & $\Re[\overline{z}_{1a} z]$                         \\
                           & 1     & 1      & $\Re[\overline{z}_{1b} z]$                         \\
                           & 1     & 2      & $ \Re[z_{1a} \overline{z}_{2a} z]$                 \\
                           & 1     & 2      & $\Re[z_{1b} \overline{z}_{2a} z]$                  \\
                           & 1     & 2      & $\Re[z_{1a} \overline{z}_{2b} z]$                  \\
                           & 1     & 2      & $\Re[z_{1b} \overline{z}_{2b} z]$                  \\
                           & 1     & 2      & $ \Re[z_{2a} \overline{z}_{3} z]$                  \\
                           & 1     & 2      & $\Re[z_{2b} \overline{z}_{3} z]$                   \\
                           & 1     & 2      & $\Re[z_{3} \overline{z}_{4} z]$                    \\
                           & 1     & 3      & $ \Re[z_{1a}^{2} \overline{z}_{3} z]$              \\
                           & 1     & 3      & $ \Re[\overline{z}_{2b}^{2} z_{3} z]$              \\
                           & 1     & 3      & $\Re[z_{1b}^{2} \overline{z}_{3} z]$               \\
                           & 1     & 3      & $\Re[\overline{z}_{2a}^{2} z_{3} z]$               \\
                           & 1     & 3      & $\Re[\overline{z}_{2a} \overline{z}_{2b} z_{3} z]$ \\
                           & 1     & 3      & $ \Re[z_{1a} z_{1b} \overline{z}_{3} z]$           \\
                           & 1     & 3      & $\Re[\overline{z}_{2a} \overline{z}_{3} z_{4} z]$  \\
                           & 1     & 3      & $\Re[\overline{z}_{2b} \overline{z}_{3} z_{4} z]$  \\
                           & 1     & 3      & $\Re[z_{1a} z_{2a} \overline{z}_{4} z]$            \\
                           & 1     & 3      & $\Re[z_{1b} z_{2a} \overline{z}_{4} z]$            \\
                           & 1     & 3      & $\Re[z_{1a} z_{2b} \overline{z}_{4} z]$            \\
                           & 1     & 3      & $\Re[z_{1b} z_{2b} \overline{z}_{4} z]$            \\
                           & 1     & 4      & $\Re[z_{1a}^{3} \overline{z}_{4} z]$               \\
                           & 1     & 4      & $\Re[z_{1b}^{3} \overline{z}_{4} z]$               \\
                           & 1     & 4      & $\Re[z_{1a} \overline{z}_{3}^{2} z_{4} z]$         \\
                           & 1     & 4      & $\Re[z_{1b} \overline{z}_{3}^{2} z_{4} z]$         \\
                           & 1     & 4      & $\Re[z_{1a}^{2} z_{1b} \overline{z}_{4} z]$        \\
                           & 1     & 4      & $ \Re[z_{1a} z_{1b}^{2} \overline{z}_{4} z]$       \\
                           & 1     & 5      & $\Re[\overline{z}_{3}^{3} z_{4}^{2} z]$            \\
    \midrule
                           & 2     & 0      & $z \overline{z}$                                   \\
                           & 2     & 1      & $\Re[\overline{z}_{2a} z^{2}]$                     \\
                           & 2     & 1      & $ \Re[\overline{z}_{2b} z^{2}]$                    \\
                           & 2     & 2      & $\Re[z_{1a} \overline{z}_{3} z^{2}]$               \\
                           & 2     & 2      & $\Re[z_{1b} \overline{z}_{3} z^{2}]$               \\
                           & 2     & 2      & $\Re[z_{2a} \overline{z}_{4} z^{2}]$               \\
                           & 2     & 2      & $\Re[z_{2b} \overline{z}_{4} z^{2}]$               \\
                           & 2     & 3      & $\Re[z_{1a}^{2} \overline{z}_{4} z^{2}]$           \\
                           & 2     & 3      & $\Re[z_{1b}^{2} \overline{z}_{4} z^{2}]$           \\
                           & 2     & 3      & $\Re[\overline{z}_{3}^{2} z_{4} z^{2}]$            \\
                           & 2     & 3      & $\Re[z_{1a} z_{1b} \overline{z}_{4} z^{2}]$        \\
    \midrule
                           & 3     & 1      & $\Re[\overline{z}_{3} z^{3}]$                      \\
                           & 3     & 2      & $\Re[z_{1a} \overline{z}_{4} z^{3}]$               \\
                           & 3     & 2      & $\Re[z_{1b} \overline{z}_{4} z^{3}]$               \\
    \midrule
                           & 4     & 1      & $\Re[\overline{z}_{4} z^{4}]$                      \\
    \bottomrule
  \end{tabular}
\end{subtable}

\end{table}

\begin{table}[h]
  \caption{Hemitropic covariants for the triplet $(\bC, \bP, \pmb\varepsilon_0^{\sigma})$}
  \label{tab:FullPiezo-SO2-hemitropic}
  \setcounter{mtligne}{108}

\begin{subtable}[T]{.5\linewidth}
  \centering
  \scriptsize
  \begin{tabular}{>{\stepcounter{mtligne} \themtligne}cccl}
    \toprule
    \multicolumn{1}{c}{\#} & order & degree & Formula                                                  \\
    \midrule
                           & 0     & 2      & $\Im[z_{1a} \overline{z}_{1b}]$                          \\
                           & 0     & 2      & $ \Im[z_{2a} \overline{z}_{2b}]$                         \\
                           & 0     & 3      & $ \Im[z_{1a}^{2} \overline{z}_{2a}]$                     \\
                           & 0     & 3      & $\Im[z_{1a}^{2} \overline{z}_{2b}]$                      \\
                           & 0     & 3      & $ \Im[z_{1b}^{2} \overline{z}_{2a}]$                     \\
                           & 0     & 3      & $\Im[z_{1b}^{2} \overline{z}_{2b}]$                      \\
                           & 0     & 3      & $ \Im[z_{2a}^{2} \overline{z}_{4}]$                      \\
                           & 0     & 3      & $ \Im[z_{2b}^{2} \overline{z}_{4}]$                      \\
                           & 0     & 3      & $\Im[z_{1a} z_{1b} \overline{z}_{2a}]$                   \\
                           & 0     & 3      & $ \Im[z_{1a} z_{1b} \overline{z}_{2b}]$                  \\
                           & 0     & 3      & $\Im[z_{1a} z_{2a} \overline{z}_{3}]$                    \\
                           & 0     & 3      & $\Im[z_{1b} z_{2a} \overline{z}_{3}]$                    \\
                           & 0     & 3      & $ \Im[z_{1b} z_{2b} \overline{z}_{3}]$                   \\
                           & 0     & 3      & $ \Im[z_{1a} z_{2b} \overline{z}_{3}]$                   \\
                           & 0     & 3      & $\Im[z_{2a} z_{2b} \overline{z}_{4}]$                    \\
                           & 0     & 3      & $\Im[z_{1a} z_{3} \overline{z}_{4}]$                     \\
                           & 0     & 3      & $ \Im[z_{1b} z_{3} \overline{z}_{4}]$                    \\
                           & 0     & 4      & $\Im[z_{1a}^{3} \overline{z}_{3}]$                       \\
                           & 0     & 4      & $\Im[z_{1a}^{2} z_{1b} \overline{z}_{3}]$                \\
                           & 0     & 4      & $ \Im[z_{1a} z_{1b}^{2} \overline{z}_{3}]$               \\
                           & 0     & 4      & $ \Im[z_{1b}^{3} \overline{z}_{3}]$                      \\
                           & 0     & 4      & $ \Im[z_{1a} \overline{z}_{2a}^{2} z_{3}]$               \\
                           & 0     & 4      & $\Im[z_{1b} \overline{z}_{2a}^{2} z_{3}]$                \\
                           & 0     & 4      & $\Im[z_{1a} \overline{z}_{2a} \overline{z}_{2b} z_{3}]$  \\
                           & 0     & 4      & $ \Im[z_{1b} \overline{z}_{2a} \overline{z}_{2b} z_{3}]$ \\
                           & 0     & 4      & $ \Im[z_{1a} \overline{z}_{2b}^{2} z_{3}]$               \\
                           & 0     & 4      & $\Im[z_{1b} \overline{z}_{2b}^{2} z_{3}]$                \\
                           & 0     & 4      & $\Im[z_{1a}^{2} z_{2a} \overline{z}_{4}]$                \\
                           & 0     & 4      & $\Im[z_{1a} z_{1b} z_{2a} \overline{z}_{4}]$             \\
                           & 0     & 4      & $ \Im[z_{1b}^{2} z_{2a} \overline{z}_{4}]$               \\
                           & 0     & 4      & $ \Im[z_{1a}^{2} z_{2b} \overline{z}_{4}]$               \\
                           & 0     & 4      & $ \Im[z_{1a} z_{1b} z_{2b} \overline{z}_{4}]$            \\
                           & 0     & 4      & $\Im[z_{1b}^{2} z_{2b} \overline{z}_{4}]$                \\
                           & 0     & 4      & $\Im[z_{1a} \overline{z}_{2a} \overline{z}_{3} z_{4}]$   \\
                           & 0     & 4      & $ \Im[z_{1b} \overline{z}_{2a} \overline{z}_{3} z_{4}]$  \\
                           & 0     & 4      & $\Im[z_{1a} \overline{z}_{2b} \overline{z}_{3} z_{4}]$   \\
                           & 0     & 4      & $ \Im[z_{1b} \overline{z}_{2b} \overline{z}_{3} z_{4}]$  \\
                           & 0     & 4      & $\Im[z_{2a} \overline{z}_{3}^{2} z_{4}]$                 \\
                           & 0     & 4      & $\Im[z_{2b} \overline{z}_{3}^{2} z_{4}]$                 \\
                           & 0     & 5      & $ \Im[z_{2a}^{3} \overline{z}_{3}^{2}]$                  \\
                           & 0     & 5      & $\Im[z_{2a}^{2} z_{2b} \overline{z}_{3}^{2}]$            \\
                           & 0     & 5      & $ \Im[z_{2a} z_{2b}^{2} \overline{z}_{3}^{2}]$           \\
                           & 0     & 5      & $\Im[z_{2b}^{3} \overline{z}_{3}^{2}]$                   \\
                           & 0     & 5      & $ \Im[z_{1a}^{4} \overline{z}_{4}]$                      \\
                           & 0     & 5      & $ \Im[z_{1a}^{3} z_{1b} \overline{z}_{4}]$               \\
                           & 0     & 5      & $ \Im[z_{1a}^{2} z_{1b}^{2} \overline{z}_{4}]$           \\
                           & 0     & 5      & $ \Im[z_{1a} z_{1b}^{3} \overline{z}_{4}]$               \\
                           & 0     & 5      & $\Im[z_{1b}^{4} \overline{z}_{4}]$                       \\
                           & 0     & 5      & $\Im[z_{2a} z_{3}^{2} \overline{z}_{4}^{2}]$             \\
                           & 0     & 5      & $\Im[z_{2b} z_{3}^{2} \overline{z}_{4}^{2}]$             \\
    \bottomrule
  \end{tabular}
\end{subtable}%
\begin{subtable}[T]{.5\linewidth}
  \centering
  \scriptsize
  \begin{tabular}{>{\stepcounter{mtligne} \themtligne}cccl}
    \toprule
    \multicolumn{1}{c}{\#} & order & degree & Formula                                            \\
    \midrule
                           & 0     & 5      & $ \Im[z_{1a}^{2} \overline{z}_{3}^{2} z_{4}]$      \\
                           & 0     & 5      & $\Im[z_{1a} z_{1b} \overline{z}_{3}^{2} z_{4}]$    \\
                           & 0     & 5      & $\Im[z_{1b}^{2} \overline{z}_{3}^{2} z_{4}]$       \\
                           & 0     & 6      & $\Im[z_{1a} \overline{z}_{3}^{3} z_{4}^{2}]$       \\
                           & 0     & 6      & $ \Im[z_{1b} \overline{z}_{3}^{3} z_{4}^{2}]$      \\
                           & 0     & 7      & $\Im[z_{3}^{4} \overline{z}_{4}^{3}]$              \\
    \midrule
                           & 1     & 1      & $\Im[\overline{z}_{1a} z]$                         \\
                           & 1     & 1      & $\Im[\overline{z}_{1b} z]$                         \\
                           & 1     & 2      & $ \Im[z_{1a} \overline{z}_{2a} z]$                 \\
                           & 1     & 2      & $\Im[z_{1b} \overline{z}_{2a} z]$                  \\
                           & 1     & 2      & $\Im[z_{1a} \overline{z}_{2b} z]$                  \\
                           & 1     & 2      & $\Im[z_{1b} \overline{z}_{2b} z]$                  \\
                           & 1     & 2      & $ \Im[z_{2a} \overline{z}_{3} z]$                  \\
                           & 1     & 2      & $\Im[z_{2b} \overline{z}_{3} z]$                   \\
                           & 1     & 2      & $\Im[z_{3} \overline{z}_{4} z]$                    \\
                           & 1     & 3      & $ \Im[z_{1a}^{2} \overline{z}_{3} z]$              \\
                           & 1     & 3      & $ \Im[\overline{z}_{2b}^{2} z_{3} z]$              \\
                           & 1     & 3      & $\Im[z_{1b}^{2} \overline{z}_{3} z]$               \\
                           & 1     & 3      & $\Im[\overline{z}_{2a}^{2} z_{3} z]$               \\
                           & 1     & 3      & $\Im[\overline{z}_{2a} \overline{z}_{2b} z_{3} z]$ \\
                           & 1     & 3      & $ \Im[z_{1a} z_{1b} \overline{z}_{3} z]$           \\
                           & 1     & 3      & $\Im[\overline{z}_{2a} \overline{z}_{3} z_{4} z]$  \\
                           & 1     & 3      & $\Im[\overline{z}_{2b} \overline{z}_{3} z_{4} z]$  \\
                           & 1     & 3      & $\Im[z_{1a} z_{2a} \overline{z}_{4} z]$            \\
                           & 1     & 3      & $\Im[z_{1b} z_{2a} \overline{z}_{4} z]$            \\
                           & 1     & 3      & $\Im[z_{1a} z_{2b} \overline{z}_{4} z]$            \\
                           & 1     & 3      & $\Im[z_{1b} z_{2b} \overline{z}_{4} z]$            \\
                           & 1     & 4      & $\Im[z_{1a}^{3} \overline{z}_{4} z]$               \\
                           & 1     & 4      & $\Im[z_{1b}^{3} \overline{z}_{4} z]$               \\
                           & 1     & 4      & $\Im[z_{1a} \overline{z}_{3}^{2} z_{4} z]$         \\
                           & 1     & 4      & $\Im[z_{1b} \overline{z}_{3}^{2} z_{4} z]$         \\
                           & 1     & 4      & $\Im[z_{1a}^{2} z_{1b} \overline{z}_{4} z]$        \\
                           & 1     & 4      & $ \Im[z_{1a} z_{1b}^{2} \overline{z}_{4} z]$       \\
                           & 1     & 5      & $\Im[\overline{z}_{3}^{3} z_{4}^{2} z]$            \\
    \midrule
                           & 2     & 1      & $\Im[\overline{z}_{2a} z^{2}]$                     \\
                           & 2     & 1      & $ \Im[\overline{z}_{2b} z^{2}]$                    \\
                           & 2     & 2      & $\Im[z_{1a} \overline{z}_{3} z^{2}]$               \\
                           & 2     & 2      & $\Im[z_{1b} \overline{z}_{3} z^{2}]$               \\
                           & 2     & 2      & $\Im[z_{2a} \overline{z}_{4} z^{2}]$               \\
                           & 2     & 2      & $\Im[z_{2b} \overline{z}_{4} z^{2}]$               \\
                           & 2     & 3      & $\Im[z_{1a}^{2} \overline{z}_{4} z^{2}]$           \\
                           & 2     & 3      & $\Im[z_{1b}^{2} \overline{z}_{4} z^{2}]$           \\
                           & 2     & 3      & $\Im[\overline{z}_{3}^{2} z_{4} z^{2}]$            \\
                           & 2     & 3      & $\Im[z_{1a} z_{1b} \overline{z}_{4} z^{2}]$        \\
    \midrule
                           & 3     & 1      & $\Im[\overline{z}_{3} z^{3}]$                      \\
                           & 3     & 2      & $\Im[z_{1a} \overline{z}_{4} z^{3}]$               \\
                           & 3     & 2      & $\Im[z_{1b} \overline{z}_{4} z^{3}]$               \\
    \midrule
                           & 4     & 1      & $\Im[\overline{z}_{4} z^{4}]$                      \\
    \bottomrule
  \end{tabular}
\end{subtable}

\end{table}

\begin{table}[h]
  \caption{Isotropic products of hemitropic covariants for the triplet $(\bC, \bP, \pmb\varepsilon_0^{\sigma})$}
  \label{tab:FullPiezo-SO2-hemitropic-products}
  \setcounter{mtligne}{108}

\begin{subtable}[T]{.5\linewidth}
  \centering
  \scriptsize
  \begin{tabular}{>{\stepcounter{mtligne} \themtligne}cccl}
    \toprule
    \multicolumn{1}{c}{\#} & order & degree & Formula                                                            \\
    \midrule
                           & 0     & 4      & $\Im[z_{1a} \overline{z}_{1b}] \Im[z_{2a} \overline{z}_{2b}]$      \\
                           & 0     & 5      & $\Im[z_{1a} \overline{z}_{1b}] \Im[z_{2a}^{2} \overline{z}_{4}]$   \\
                           & 0     & 5      & $\Im[z_{1a} \overline{z}_{1b}] \Im[z_{2b}^{2} \overline{z}_{4}]$   \\
                           & 0     & 5      & $\Im[z_{1a} \overline{z}_{1b}] \Im[z_{2a} z_{2b}\overline{z}_{4}]$ \\
                           & 0     & 5      & $ \Im[z_{2a} \overline{z}_{2b}] \Im[z_{1a} z_{3}\overline{z}_{4}]$ \\
                           & 0     & 5      & $\Im[z_{2a} \overline{z}_{2b}] \Im[z_{1b} z_{3} \overline{z}_{4}]$ \\
    \bottomrule
  \end{tabular}
\end{subtable}%
\begin{subtable}[T]{.5\linewidth}
  \centering
  \scriptsize
  \begin{tabular}{>{\stepcounter{mtligne} \themtligne}cccl}
    \toprule
    \multicolumn{1}{c}{\#} & order & degree & Formula                                                         \\               \midrule
                           & 2     & 3      & $\Im[z_{2a} \overline{z}_{2b}] \Im[\overline{z}_{1a} z]$        \\
                           & 2     & 3      & $\Im[z_{2a} \overline{z}_{2b}] \Im[\overline{z}_{1b} z]$        \\
                           & 2     & 4      & $\Im[z_{2a}^{2} \overline{z}_{4}] \Im[\overline{z}_{1a} z]$     \\
                           & 2     & 4      & $\Im[z_{2a}^{2} \overline{z}_{4}] \Im[\overline{z}_{1b} z]$     \\
                           & 2     & 4      & $\Im[z_{2b}^{2} \overline{z}_{4}] \Im[\overline{z}_{1a} z]$     \\
                           & 2     & 4      & $\Im[z_{2b}^{2} \overline{z}_{4}] \Im[\overline{z}_{1b} z]$     \\
                           & 2     & 4      & $\Im[z_{2a} z_{2b} \overline{z}_{4}] \Im[\overline{z}_{1a} z]$  \\
                           & 2     & 4      & $ \Im[z_{2a} z_{2b} \overline{z}_{4}] \Im[\overline{z}_{1b} z]$ \\
                           & 2     & 4      & $\Im[z_{2a} \overline{z}_{2b}] \Im[z_{3} \overline{z}_{4} z]$   \\
    \bottomrule
  \end{tabular}
\end{subtable}

\end{table}

\clearpage

\subsection{Twelfth-order totally symmetric tensor}
\label{subsec:twelth-order-symmetric-tensors}

Totally symmetric tensors are encountered for the intrinsic description of directional data~\cite{Kan1984}. They may represent the directional density of spatial contacts and grains orientations within granular materials~\cite{Oda1982}, the directional crack density~\cite{Kan1984,Ona1984,LK1993,TNS2001}, or the directional description of microstructure degradation by rafting in single crystal superalloys at high temperature~\cite{CDC2018}.

The harmonic decomposition of $\Sym^{12}(\RR^{2})$ is the same under $\SO(2)$ or $\OO(2)$ and writes
\begin{equation*}
  \HT{0} \oplus \HT{2} \oplus \HT{4} \oplus \HT{6} \oplus \HT{8} \oplus \HT{10} \oplus \HT{12},
\end{equation*}
and we will write
\begin{equation*}
  \bS = (\lambda,z_{2},z_{4},z_{6},z_{8},z_{10},z_{12}),
\end{equation*}
where $\lambda\in \RR$ and $z_{k}\in \CC$. Due to the large number of two-dimensional harmonic components, only minimal integrity bases for its invariant algebra are detailed (and, as for piezoelectricity law, translations into tensorial expressions will not be provided).

\begin{thm}\label{thm:S12-tensors-O2-SO2}
  A minimal integrity basis for $\inv(\Sym^{12}(\RR^{2}),\SO(2))$ consists in the 211 invariants of~\autoref{tab:S12-SO2-isotropic} and~\autoref{tab:S12-SO2-hemitropic}.
  A minimal integrity basis for $\inv(\Sym^{12}(\RR^{2}),\OO(2))$ consists in the 113 invariants of~\autoref{tab:S12-SO2-isotropic} and \autoref{tab:S12-SO2-hemitropic-products}.
\end{thm}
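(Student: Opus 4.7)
The plan is to apply directly the machinery of Sections~\ref{sec:integrity-bases} and \ref{sec:algorithm} to the harmonic decomposition
\begin{equation*}
  \Sym^{12}(\RR^{2}) \oplus \RR^{2} \simeq \HT{0} \oplus \HT{2} \oplus \HT{4} \oplus \HT{6} \oplus \HT{8} \oplus \HT{10} \oplus \HT{12} \oplus \HT{1},
\end{equation*}
valid both as a representation of $\SO(2)$ and of $\OO(2)$, the trailing $\HT{1}$ being the $\xx$-factor that records the order of a covariant. Writing $z_{2k}$ for the complex variable attached to each $\HT{2k}$, $k=1,\dotsc,6$, and $z=x+iy$ for the variable attached to $\RR^{2}$, Lemma~\ref{lem:Diophantine-equation} reduces the $\SO(2)$-invariance of a monomial $z_{2}^{\alpha_{2}}\overline{z}_{2}^{\beta_{2}}\dotsm z_{12}^{\alpha_{12}}\overline{z}_{12}^{\beta_{12}}z^{\gamma}\overline{z}^{\delta}$ to the single linear Diophantine equation $\sum_{k=1}^{6}2k(\alpha_{2k}-\beta_{2k}) + (\gamma-\delta) = 0$.

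For the $\SO(2)$ case I would enumerate the irreducible non-negative solutions of this equation using \emph{Normaliz}, then form the associated real and imaginary parts $\Re(\bm_{l})$, $\Im(\bm_{l})$, and complete them with the quadratic generators $\lambda$, $\abs{z_{2k}}^{2}$, $z\overline{z}$. By Theorem~\ref{thm:SO2-minimal-integrity-basis} this list is already a minimal integrity basis for $\cov(\Sym^{12}(\RR^{2}),\SO(2))$. Its cardinality can be cross-checked against the $\SO(2)$-Hilbert series computed via the formulas of Section~\ref{sec:Hilbert-series}, which also provides, multi-degree by multi-degree, the a priori target dimensions that validate Tables~\ref{tab:S12-SO2-isotropic} and \ref{tab:S12-SO2-hemitropic}.

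For the $\OO(2)$ case, Theorem~\ref{thm:O2-integrity-basis} extracts from the above $\SO(2)$-basis an a priori generating set consisting of the isotropic generators $\lambda$, $\abs{z_{2k}}^{2}$, $z\overline{z}$, $\Re(\bm_{l})$ (no pseudo-scalar $\xi_{i}$ appears, since $\HT{-1}$ is absent from the decomposition) together with those products $\Im(\bm_{p})\Im(\bm_{q})$ that survive the reduction rule of that theorem. This set is not minimal, so I would then run the cleaning algorithm of Section~\ref{sec:algorithm}, sweeping multi-indices $K=(k_{0},k_{2},k_{4},\dotsc,k_{12})$ in lexicographic order, computing the target dimension $a_{K}$ from Theorem~\ref{thm:O2-Hilbert-series} and Remark~\ref{rem:betak}, and solving the multi-degree Diophantine systems~\eqref{eq:Dioph_Equ_Hom_MDeg_m+1} in \emph{Mathematica} to detect the reducible combinations at each step.

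The main obstacle is purely computational: with seven two-dimensional harmonic components and invariants arising up to rather high degrees, the underlying Diophantine equation yields a very large catalogue of candidate monomials, and the cleaning sweep has to process a correspondingly large lattice of multi-indices $K$. The Hilbert-series dimension $a_{K}$ plays here a double role, as both a termination criterion and a minimality certificate: the iteration stops as soon as $\rg(\mathcal{F}^{n})=a_{K}$ holds for every $K$ up to the highest total degree at which the series still predicts new irreducible contributions, and the minimality of the resulting family $\mathcal{F}^{N}$ then follows by construction, matching the cardinalities announced in Tables~\ref{tab:S12-SO2-isotropic}, \ref{tab:S12-SO2-hemitropic} and \ref{tab:S12-SO2-hemitropic-products}.
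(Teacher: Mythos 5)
Your computational pipeline is exactly the one the paper uses for this theorem: harmonic decomposition, the Diophantine equation of Lemma~\ref{lem:Diophantine-equation} solved with \emph{Normaliz}, immediate $\SO(2)$-minimality from Theorem~\ref{thm:SO2-minimal-integrity-basis}, then Theorem~\ref{thm:O2-integrity-basis} followed by the cleaning algorithm of \autoref{sec:algorithm} with the Hilbert-series dimensions of \autoref{sec:Hilbert-series} as targets; there is no further argument in the paper beyond this computation. The genuine problem is your starting decomposition: by appending the factor $\HT{1}$ carrying $\xx$ you set up the computation for $\cov(\Sym^{12}(\RR^{2}),G)=\inv(\Sym^{12}(\RR^{2})\oplus\RR^{2},G)$, whereas the $211$ and $113$ elements announced in the statement and listed in \autoref{tab:S12-SO2-isotropic}, \autoref{tab:S12-SO2-hemitropic} and \autoref{tab:S12-SO2-hemitropic-products} are all of order $0$ in $\xx$. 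Despite the ``$\cov$'' in the theorem's wording, the paper explicitly restricts to the invariant algebra for $\Sym^{12}(\RR^{2})$ (see the preamble of \autoref{sec:applications} and of \autoref{subsec:twelth-order-symmetric-tensors}), precisely because the full covariant basis is too large. Running the computation as you describe would return a strictly larger generating family containing all the positive-order covariants, so its cardinality could not match $211$ or $113$, contrary to your claim that the counts ``validate the tables''. To land on the stated result you must either delete the $\RR^{2}$ summand from the decomposition, or keep it and afterwards extract only the multi-homogeneous generators of order $k_{0}=0$, which is legitimate by (the analogue of) Theorem~\ref{thm:subspace-reduction} applied to the stable subspace $\Sym^{12}(\RR^{2})\subset\Sym^{12}(\RR^{2})\oplus\RR^{2}$.

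A smaller point: the cleaning sweep does not stop ``when the Hilbert series ceases to predict new irreducible contributions''. It terminates after processing the finitely many multi-degrees occurring in the candidate set $\mathcal{B}$ produced by Theorem~\ref{thm:O2-integrity-basis}, which is finite because there are only finitely many irreducible solutions of the Diophantine equation. The series of Theorem~\ref{thm:O2-Hilbert-series} and Remark~\ref{rem:betak} only supplies the target dimensions $a_{K}$ at those multi-degrees; minimality of the output then follows because at each step exactly $a_{K}-\rg(\mathcal{R}_{K})$ new elements are adjoined. With these two corrections, your plan coincides with the paper's proof.
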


\begin{rem}
  Again, it remains four products $\Im(\bm_{p})\Im(\bm_{q})$ (see~\autoref{tab:S12-SO2-hemitropic-products}) which cannot be eliminated from the $\OO(2)$-integrity basis .
\end{rem}

\begin{table}[h]
  \caption{Isotropic invariants of $\Sym^{12}(\RR^2)$}
  \label{tab:S12-SO2-isotropic}
  \setcounter{mtligne}{0}

\begin{subtable}[T]{.5\linewidth}
  \centering
  \scriptsize
  \begin{tabular}{>{\stepcounter{mtligne} \themtligne}cccl}
    \toprule
    \multicolumn{1}{c}{\#} & order & degree & Formula                                                               \\
    \midrule
                           & 0     & 1      & $\lambda$                                                             \\
                           & 0     & 2      & $z_{2} \overline{z}_{2}$                                              \\
                           & 0     & 2      & $z_{4} \overline{z}_{4}$                                              \\
                           & 0     & 2      & $z_{6} \overline{z}_{6}$                                              \\
                           & 0     & 2      & $z_{8} \overline{z}_{8}$                                              \\
                           & 0     & 2      & $ z_{10} \overline{z}_{10}$                                           \\
                           & 0     & 2      & $z_{12} \overline{z}_{12}$                                            \\
                           & 0     & 3      & $\Re[z_{4} \overline{z}_{2}^{2}]$                                     \\
                           & 0     & 3      & $ \Re[z_{8} \overline{z}_{4}^{2}]$                                    \\
                           & 0     & 3      & $ \Re[z_{12} \overline{z}_{6}^{2}]$                                   \\
                           & 0     & 3      & $\Re[z_{12} \overline{z}_{10} \overline{z}_{2}]$                      \\
                           & 0     & 3      & $ \Re[z_{6} \overline{z}_{2} \overline{z}_{4}]$                       \\
                           & 0     & 3      & $ \Re[z_{8} \overline{z}_{2} \overline{z}_{6}]$                       \\
                           & 0     & 3      & $\Re[z_{10} \overline{z}_{4} \overline{z}_{6}]$                       \\
                           & 0     & 3      & $\Re[z_{10} \overline{z}_{2} \overline{z}_{8}]$                       \\
                           & 0     & 3      & $ \Re[z_{12} \overline{z}_{4} \overline{z}_{8}]$                      \\
                           & 0     & 4      & $\Re[z_{6} \overline{z}_{2}^{3}]$                                     \\
                           & 0     & 4      & $ \Re[z_{12} \overline{z}_{4}^{3}]$                                   \\
                           & 0     & 4      & $\Re[z_{6}^{2} \overline{z}_{10} \overline{z}_{2}]$                   \\
                           & 0     & 4      & $\Re[z_{8}^{2} \overline{z}_{12} \overline{z}_{4}]$                   \\
                           & 0     & 4      & $ \Re[z_{8} \overline{z}_{2}^{2} \overline{z}_{4}]$                   \\
                           & 0     & 4      & $\Re[z_{10} \overline{z}_{2} \overline{z}_{4}^{2}]$                   \\
                           & 0     & 4      & $ \Re[z_{4}^{2} \overline{z}_{2} \overline{z}_{6}]$                   \\
                           & 0     & 4      & $\Re[z_{10} \overline{z}_{2}^{2} \overline{z}_{6}]$                   \\
                           & 0     & 4      & $\Re[z_{10}^{2} \overline{z}_{12} \overline{z}_{8}]$                  \\
                           & 0     & 4      & $\Re[z_{12} \overline{z}_{2}^{2} \overline{z}_{8}]$                   \\
                           & 0     & 4      & $\Re[z_{6}^{2} \overline{z}_{4} \overline{z}_{8}]$                    \\
                           & 0     & 4      & $ \Re[z_{8}^{2} \overline{z}_{10} \overline{z}_{6}]$                  \\
                           & 0     & 4      & $\Re[z_{4} z_{8} \overline{z}_{10} \overline{z}_{2}]$                 \\
                           & 0     & 4      & $\Re[z_{10} z_{4} \overline{z}_{12} \overline{z}_{2}]$                \\
                           & 0     & 4      & $\Re[z_{6} z_{8} \overline{z}_{12} \overline{z}_{2}]$                 \\
                           & 0     & 4      & $ \Re[z_{6} z_{8} \overline{z}_{10} \overline{z}_{4}]$                \\
                           & 0     & 4      & $\Re[z_{10} z_{6} \overline{z}_{12} \overline{z}_{4}]$                \\
                           & 0     & 4      & $\Re[z_{10} z_{8} \overline{z}_{12} \overline{z}_{6}]$                \\
                           & 0     & 4      & $\Re[z_{12} \overline{z}_{2} \overline{z}_{4} \overline{z}_{6}]$      \\
                           & 0     & 4      & $\Re[z_{4} z_{6} \overline{z}_{2} \overline{z}_{8}]$                  \\
                           & 0     & 5      & $ \Re[z_{8} \overline{z}_{2}^{4}]$                                    \\
                           & 0     & 5      & $ \Re[z_{6}^{2} \overline{z}_{4}^{3}]$                                \\
                           & 0     & 5      & $\Re[z_{12}^{2} \overline{z}_{8}^{3}]$                                \\
                           & 0     & 5      & $\Re[z_{4}^{3} \overline{z}_{10} \overline{z}_{2}]$                   \\
                           & 0     & 5      & $\Re[z_{8}^{2} \overline{z}_{12} \overline{z}_{2}^{2}]$               \\
                           & 0     & 5      & $ \Re[z_{12}^{2} \overline{z}_{10}^{2} \overline{z}_{4}]$             \\
                           & 0     & 5      & $\Re[z_{10} \overline{z}_{2}^{3} \overline{z}_{4}]$                   \\
                           & 0     & 5      & $\Re[z_{10}^{2} \overline{z}_{12} \overline{z}_{4}^{2}]$              \\
                           & 0     & 5      & $ \Re[z_{12} \overline{z}_{2}^{2} \overline{z}_{4}^{2}]$              \\
                           & 0     & 5      & $\Re[z_{12} \overline{z}_{2}^{3} \overline{z}_{6}]$                   \\
                           & 0     & 5      & $\Re[z_{10} z_{8} \overline{z}_{6}^{3}]$                              \\
                           & 0     & 5      & $\Re[z_{6}^{2} \overline{z}_{2}^{2} \overline{z}_{8}]$                \\
                           & 0     & 5      & $\Re[z_{10}^{2} \overline{z}_{6}^{2} \overline{z}_{8}]$               \\
                           & 0     & 5      & $ \Re[z_{10}^{2} \overline{z}_{4} \overline{z}_{8}^{2}]$              \\
                           & 0     & 5      & $\Re[z_{8}^{2} \overline{z}_{4} \overline{z}_{6}^{2}]$                \\
                           & 0     & 5      & $ \Re[z_{4}^{2} z_{6} \overline{z}_{12} \overline{z}_{2}]$            \\
                           & 0     & 5      & $\Re[z_{6} z_{8} \overline{z}_{10} \overline{z}_{2}^{2}]$             \\
                           & 0     & 5      & $ \Re[z_{10} z_{6} \overline{z}_{12} \overline{z}_{2}^{2}]$           \\
                           & 0     & 5      & $ \Re[z_{8}^{2} \overline{z}_{10} \overline{z}_{2} \overline{z}_{4}]$ \\
    \bottomrule
  \end{tabular}
\end{subtable}%
\begin{subtable}[T]{.5\linewidth}
  \centering
  \scriptsize
  \begin{tabular}{>{\stepcounter{mtligne} \themtligne}cccl}
    \toprule
    \multicolumn{1}{c}{\#} & order & degree & Formula                                                                    \\
    \midrule
                           & 0     & 5      & $ \Re[z_{12} z_{6} \overline{z}_{10} \overline{z}_{4}^{2}]$                \\
                           & 0     & 5      & $\Re[z_{10}^{2} \overline{z}_{12} \overline{z}_{2} \overline{z}_{6}]$      \\
                           & 0     & 5      & $\Re[z_{10} z_{4} \overline{z}_{2} \overline{z}_{6}^{2}]$                  \\
                           & 0     & 5      & $ \Re[z_{10} z_{6} \overline{z}_{4}^{2} \overline{z}_{8}]$                 \\
                           & 0     & 5      & $ \Re[z_{12}^{2} \overline{z}_{10} \overline{z}_{6} \overline{z}_{8}]$     \\
                           & 0     & 5      & $ \Re[z_{12} z_{6} \overline{z}_{2} \overline{z}_{8}^{2}]$                 \\
                           & 0     & 5      & $\Re[z_{10} z_{12} \overline{z}_{6} \overline{z}_{8}^{2}]$                 \\
                           & 0     & 5      & $ \Re[z_{10} z_{8} \overline{z}_{12} \overline{z}_{2} \overline{z}_{4}]
    $                                                                                                                    \\
                           & 0     & 5      & $\Re[z_{12} z_{8} \overline{z}_{10} \overline{z}_{4} \overline{z}_{6}]$    \\
                           & 0     & 5      & $\Re[z_{12} z_{4} \overline{z}_{2} \overline{z}_{6} \overline{z}_{8}]$     \\
                           & 0     & 6      & $\Re[z_{10} \overline{z}_{2}^{5}]$                                         \\
                           & 0     & 6      & $\Re[z_{8}^{2} \overline{z}_{10} \overline{z}_{2}^{3}]$                    \\
                           & 0     & 6      & $\Re[z_{8}^{3} \overline{z}_{10}^{2} \overline{z}_{4}]$                    \\
                           & 0     & 6      & $ \Re[z_{12} \overline{z}_{2}^{4} \overline{z}_{4}]$                       \\
                           & 0     & 6      & $\Re[z_{10}^{3} \overline{z}_{12}^{2} \overline{z}_{6}]$                   \\
                           & 0     & 6      & $ \Re[z_{6}^{3} \overline{z}_{10} \overline{z}_{4}^{2}]$                   \\
                           & 0     & 6      & $ \Re[z_{8}^{3} \overline{z}_{12} \overline{z}_{6}^{2}]$                   \\
                           & 0     & 6      & $ \Re[z_{10}^{2} \overline{z}_{2} \overline{z}_{6}^{3}]$                   \\
                           & 0     & 6      & $\Re[z_{10}^{2} \overline{z}_{4}^{3} \overline{z}_{8}]$                    \\
                           & 0     & 6      & $ \Re[z_{6}^{3} \overline{z}_{2} \overline{z}_{8}^{2}]$                    \\
                           & 0     & 6      & $\Re[z_{10} z_{6} \overline{z}_{4}^{4}]$                                   \\
                           & 0     & 6      & $\Re[z_{6} z_{8}^{2} \overline{z}_{10}^{2} \overline{z}_{2}]$              \\
                           & 0     & 6      & $\Re[z_{8}^{3} \overline{z}_{10} \overline{z}_{12} \overline{z}_{2}]$      \\
                           & 0     & 6      & $\Re[z_{10}^{2} z_{6} \overline{z}_{12}^{2} \overline{z}_{2}]$             \\
                           & 0     & 6      & $\Re[z_{10} z_{8} \overline{z}_{12} \overline{z}_{2}^{3}]$                 \\
                           & 0     & 6      & $ \Re[z_{10} z_{8}^{2} \overline{z}_{12}^{2} \overline{z}_{2}]$            \\
                           & 0     & 6      & $ \Re[z_{12} z_{6}^{2} \overline{z}_{10}^{2} \overline{z}_{4}]$            \\
                           & 0     & 6      & $\Re[z_{10}^{2} z_{8} \overline{z}_{12}^{2} \overline{z}_{4}]$             \\
                           & 0     & 6      & $\Re[z_{10}^{2} \overline{z}_{12} \overline{z}_{2}^{2} \overline{z}_{4}]$  \\
                           & 0     & 6      & $\Re[z_{12}^{2} \overline{z}_{10} \overline{z}_{4}^{2} \overline{z}_{6}]$  \\
                           & 0     & 6      & $\Re[z_{12}^{2} \overline{z}_{2} \overline{z}_{6} \overline{z}_{8}^{2}]$   \\
                           & 0     & 7      & $ \Re[z_{12} \overline{z}_{2}^{6}]$                                        \\
                           & 0     & 7      & $\Re[z_{8}^{3} \overline{z}_{6}^{4}]$                                      \\
                           & 0     & 7      & $ \Re[z_{10}^{2} \overline{z}_{4}^{5}]$                                    \\
                           & 0     & 7      & $ \Re[z_{10}^{2} \overline{z}_{12} \overline{z}_{2}^{4}]$                  \\
                           & 0     & 7      & $\Re[z_{6}^{4} \overline{z}_{10}^{2} \overline{z}_{4}]$                    \\
                           & 0     & 7      & $\Re[z_{12}^{3} \overline{z}_{10}^{3} \overline{z}_{6}]$                   \\
                           & 0     & 7      & $ \Re[z_{10}^{3} \overline{z}_{12} \overline{z}_{6}^{3}]$                  \\
                           & 0     & 7      & $\Re[z_{12}^{3} \overline{z}_{10}^{2} \overline{z}_{8}^{2}]$               \\
                           & 0     & 7      & $\Re[z_{10}^{3} \overline{z}_{6} \overline{z}_{8}^{3}]$                    \\
                           & 0     & 7      & $\Re[z_{10}^{2} z_{12} \overline{z}_{8}^{4}]$                              \\
                           & 0     & 7      & $\Re[z_{8}^{3} \overline{z}_{10}^{2} \overline{z}_{2}^{2}]$                \\
                           & 0     & 7      & $\Re[z_{10}^{2} z_{8} \overline{z}_{12}^{2} \overline{z}_{2}^{2}]$         \\
                           & 0     & 7      & $ \Re[z_{10}^{3} \overline{z}_{12}^{2} \overline{z}_{2} \overline{z}_{4}]$ \\
                           & 0     & 8      & $ \Re[z_{10}^{3} \overline{z}_{6}^{5}]$                                    \\
                           & 0     & 8      & $ \Re[z_{8}^{4} \overline{z}_{10}^{3} \overline{z}_{2}]$                   \\
                           & 0     & 8      & $\Re[z_{10}^{4} \overline{z}_{12}^{3} \overline{z}_{4}]$                   \\
                           & 0     & 8      & $\Re[z_{10}^{3} \overline{z}_{12}^{2} \overline{z}_{2}^{3}]$               \\
                           & 0     & 8      & $ \Re[z_{10}^{3} z_{8} \overline{z}_{12}^{3} \overline{z}_{2}]$            \\
                           & 0     & 9      & $ \Re[z_{10}^{4} \overline{z}_{8}^{5}]$                                    \\
                           & 0     & 9      & $\Re[z_{10}^{4} \overline{z}_{12}^{3} \overline{z}_{2}^{2}]$               \\
                           & 0     & 9      & $ \Re[z_{12}^{4} \overline{z}_{10}^{4} \overline{z}_{8}]$                  \\
                           & 0     & 10     & $\Re[z_{10}^{5} \overline{z}_{12}^{4} \overline{z}_{2}]$                   \\
                           & 0     & 11     & $\Re[z_{12}^{5} \overline{z}_{10}^{6}]$                                    \\
    \bottomrule
  \end{tabular}
\end{subtable}

\end{table}

\begin{table}[h]
  \caption{Hemitropic invariants of $\Sym^{12}(\RR^2)$}
  \label{tab:S12-SO2-hemitropic}
  \setcounter{mtligne}{109}

\begin{subtable}[T]{.5\linewidth}
  \centering
  \scriptsize
  \begin{tabular}{>{\stepcounter{mtligne} \themtligne}cccl}
    \toprule
    \multicolumn{1}{c}{\#} & order & degree & Formula                                                               \\
    \midrule
                           & 0     & 3      & $\Im[z_{4} \overline{z}_{2}^{2}]$                                     \\
                           & 0     & 3      & $ \Im[z_{8} \overline{z}_{4}^{2}]$                                    \\
                           & 0     & 3      & $ \Im[z_{12} \overline{z}_{6}^{2}]$                                   \\
                           & 0     & 3      & $\Im[z_{12} \overline{z}_{10} \overline{z}_{2}]$                      \\
                           & 0     & 3      & $ \Im[z_{6} \overline{z}_{2} \overline{z}_{4}]$                       \\
                           & 0     & 3      & $ \Im[z_{8} \overline{z}_{2} \overline{z}_{6}]$                       \\
                           & 0     & 3      & $\Im[z_{10} \overline{z}_{4} \overline{z}_{6}]$                       \\
                           & 0     & 3      & $\Im[z_{10} \overline{z}_{2} \overline{z}_{8}]$                       \\
                           & 0     & 3      & $ \Im[z_{12} \overline{z}_{4} \overline{z}_{8}]$                      \\
                           & 0     & 4      & $\Im[z_{6} \overline{z}_{2}^{3}]$                                     \\
                           & 0     & 4      & $ \Im[z_{12} \overline{z}_{4}^{3}]$                                   \\
                           & 0     & 4      & $\Im[z_{6}^{2} \overline{z}_{10} \overline{z}_{2}]$                   \\
                           & 0     & 4      & $\Im[z_{8}^{2} \overline{z}_{12} \overline{z}_{4}]$                   \\
                           & 0     & 4      & $ \Im[z_{8} \overline{z}_{2}^{2} \overline{z}_{4}]$                   \\
                           & 0     & 4      & $\Im[z_{10} \overline{z}_{2} \overline{z}_{4}^{2}]$                   \\
                           & 0     & 4      & $ \Im[z_{4}^{2} \overline{z}_{2} \overline{z}_{6}]$                   \\
                           & 0     & 4      & $\Im[z_{10} \overline{z}_{2}^{2} \overline{z}_{6}]$                   \\
                           & 0     & 4      & $\Im[z_{10}^{2} \overline{z}_{12} \overline{z}_{8}]$                  \\
                           & 0     & 4      & $\Im[z_{12} \overline{z}_{2}^{2} \overline{z}_{8}]$                   \\
                           & 0     & 4      & $\Im[z_{6}^{2} \overline{z}_{4} \overline{z}_{8}]$                    \\
                           & 0     & 4      & $ \Im[z_{8}^{2} \overline{z}_{10} \overline{z}_{6}]$                  \\
                           & 0     & 4      & $\Im[z_{4} z_{8} \overline{z}_{10} \overline{z}_{2}]$                 \\
                           & 0     & 4      & $\Im[z_{10} z_{4} \overline{z}_{12} \overline{z}_{2}]$                \\
                           & 0     & 4      & $\Im[z_{6} z_{8} \overline{z}_{12} \overline{z}_{2}]$                 \\
                           & 0     & 4      & $ \Im[z_{6} z_{8} \overline{z}_{10} \overline{z}_{4}]$                \\
                           & 0     & 4      & $\Im[z_{10} z_{6} \overline{z}_{12} \overline{z}_{4}]$                \\
                           & 0     & 4      & $\Im[z_{10} z_{8} \overline{z}_{12} \overline{z}_{6}]$                \\
                           & 0     & 4      & $\Im[z_{12} \overline{z}_{2} \overline{z}_{4} \overline{z}_{6}]$      \\
                           & 0     & 4      & $\Im[z_{4} z_{6} \overline{z}_{2} \overline{z}_{8}]$                  \\
                           & 0     & 5      & $ \Im[z_{8} \overline{z}_{2}^{4}]$                                    \\
                           & 0     & 5      & $ \Im[z_{6}^{2} \overline{z}_{4}^{3}]$                                \\
                           & 0     & 5      & $\Im[z_{12}^{2} \overline{z}_{8}^{3}]$                                \\
                           & 0     & 5      & $\Im[z_{4}^{3} \overline{z}_{10} \overline{z}_{2}]$                   \\
                           & 0     & 5      & $\Im[z_{8}^{2} \overline{z}_{12} \overline{z}_{2}^{2}]$               \\
                           & 0     & 5      & $ \Im[z_{12}^{2} \overline{z}_{10}^{2} \overline{z}_{4}]$             \\
                           & 0     & 5      & $\Im[z_{10} \overline{z}_{2}^{3} \overline{z}_{4}]$                   \\
                           & 0     & 5      & $\Im[z_{10}^{2} \overline{z}_{12} \overline{z}_{4}^{2}]$              \\
                           & 0     & 5      & $ \Im[z_{12} \overline{z}_{2}^{2} \overline{z}_{4}^{2}]$              \\
                           & 0     & 5      & $\Im[z_{12} \overline{z}_{2}^{3} \overline{z}_{6}]$                   \\
                           & 0     & 5      & $\Im[z_{10} z_{8} \overline{z}_{6}^{3}]$                              \\
                           & 0     & 5      & $\Im[z_{6}^{2} \overline{z}_{2}^{2} \overline{z}_{8}]$                \\
                           & 0     & 5      & $\Im[z_{10}^{2} \overline{z}_{6}^{2} \overline{z}_{8}]$               \\
                           & 0     & 5      & $ \Im[z_{10}^{2} \overline{z}_{4} \overline{z}_{8}^{2}]$              \\
                           & 0     & 5      & $\Im[z_{8}^{2} \overline{z}_{4} \overline{z}_{6}^{2}]$                \\
                           & 0     & 5      & $ \Im[z_{4}^{2} z_{6} \overline{z}_{12} \overline{z}_{2}]$            \\
                           & 0     & 5      & $\Im[z_{6} z_{8} \overline{z}_{10} \overline{z}_{2}^{2}]$             \\
                           & 0     & 5      & $ \Im[z_{10} z_{6} \overline{z}_{12} \overline{z}_{2}^{2}]$           \\
                           & 0     & 5      & $ \Im[z_{8}^{2} \overline{z}_{10} \overline{z}_{2} \overline{z}_{4}]$ \\
                           & 0     & 5      & $ \Im[z_{12} z_{6} \overline{z}_{10} \overline{z}_{4}^{2}]$           \\
                           & 0     & 5      & $\Im[z_{10}^{2} \overline{z}_{12} \overline{z}_{2} \overline{z}_{6}]$ \\
                           & 0     & 5      & $\Im[z_{10} z_{4} \overline{z}_{2} \overline{z}_{6}^{2}]$             \\
    \bottomrule
  \end{tabular}
\end{subtable}%
\begin{subtable}[T]{.5\linewidth}
  \centering
  \scriptsize
  \begin{tabular}{>{\stepcounter{mtligne} \themtligne}cccl}
    \toprule
    \multicolumn{1}{c}{\#} & order & degree & Formula                                                                    \\
    \midrule
                           & 0     & 5      & $ \Im[z_{10} z_{6} \overline{z}_{4}^{2} \overline{z}_{8}]$                 \\
                           & 0     & 5      & $ \Im[z_{12}^{2} \overline{z}_{10} \overline{z}_{6} \overline{z}_{8}]$     \\
                           & 0     & 5      & $ \Im[z_{12} z_{6} \overline{z}_{2} \overline{z}_{8}^{2}]$                 \\
                           & 0     & 5      & $\Im[z_{10} z_{12} \overline{z}_{6} \overline{z}_{8}^{2}]$                 \\
                           & 0     & 5      & $ \Im[z_{10} z_{8} \overline{z}_{12} \overline{z}_{2} \overline{z}_{4}]
    $                                                                                                                    \\
                           & 0     & 5      & $\Im[z_{12} z_{8} \overline{z}_{10} \overline{z}_{4} \overline{z}_{6}]$    \\
                           & 0     & 5      & $\Im[z_{12} z_{4} \overline{z}_{2} \overline{z}_{6} \overline{z}_{8}]$     \\
                           & 0     & 6      & $\Im[z_{10} \overline{z}_{2}^{5}]$                                         \\
                           & 0     & 6      & $\Im[z_{8}^{2} \overline{z}_{10} \overline{z}_{2}^{3}]$                    \\
                           & 0     & 6      & $\Im[z_{8}^{3} \overline{z}_{10}^{2} \overline{z}_{4}]$                    \\
                           & 0     & 6      & $ \Im[z_{12} \overline{z}_{2}^{4} \overline{z}_{4}]$                       \\
                           & 0     & 6      & $\Im[z_{10}^{3} \overline{z}_{12}^{2} \overline{z}_{6}]$                   \\
                           & 0     & 6      & $ \Im[z_{6}^{3} \overline{z}_{10} \overline{z}_{4}^{2}]$                   \\
                           & 0     & 6      & $ \Im[z_{8}^{3} \overline{z}_{12} \overline{z}_{6}^{2}]$                   \\
                           & 0     & 6      & $ \Im[z_{10}^{2} \overline{z}_{2} \overline{z}_{6}^{3}]$                   \\
                           & 0     & 6      & $\Im[z_{10}^{2} \overline{z}_{4}^{3} \overline{z}_{8}]$                    \\
                           & 0     & 6      & $ \Im[z_{6}^{3} \overline{z}_{2} \overline{z}_{8}^{2}]$                    \\
                           & 0     & 6      & $\Im[z_{10} z_{6} \overline{z}_{4}^{4}]$                                   \\
                           & 0     & 6      & $\Im[z_{6} z_{8}^{2} \overline{z}_{10}^{2} \overline{z}_{2}]$              \\
                           & 0     & 6      & $\Im[z_{8}^{3} \overline{z}_{10} \overline{z}_{12} \overline{z}_{2}]$      \\
                           & 0     & 6      & $\Im[z_{10}^{2} z_{6} \overline{z}_{12}^{2} \overline{z}_{2}]$             \\
                           & 0     & 6      & $\Im[z_{10} z_{8} \overline{z}_{12} \overline{z}_{2}^{3}]$                 \\
                           & 0     & 6      & $ \Im[z_{10} z_{8}^{2} \overline{z}_{12}^{2} \overline{z}_{2}]$            \\
                           & 0     & 6      & $ \Im[z_{12} z_{6}^{2} \overline{z}_{10}^{2} \overline{z}_{4}]$            \\
                           & 0     & 6      & $\Im[z_{10}^{2} z_{8} \overline{z}_{12}^{2} \overline{z}_{4}]$             \\
                           & 0     & 6      & $\Im[z_{10}^{2} \overline{z}_{12} \overline{z}_{2}^{2} \overline{z}_{4}]$  \\
                           & 0     & 6      & $\Im[z_{12}^{2} \overline{z}_{10} \overline{z}_{4}^{2} \overline{z}_{6}]$  \\
                           & 0     & 6      & $\Im[z_{12}^{2} \overline{z}_{2} \overline{z}_{6} \overline{z}_{8}^{2}]$   \\
                           & 0     & 7      & $ \Im[z_{12} \overline{z}_{2}^{6}]$                                        \\
                           & 0     & 7      & $\Im[z_{8}^{3} \overline{z}_{6}^{4}]$                                      \\
                           & 0     & 7      & $ \Im[z_{10}^{2} \overline{z}_{4}^{5}]$                                    \\
                           & 0     & 7      & $ \Im[z_{10}^{2} \overline{z}_{12} \overline{z}_{2}^{4}]$                  \\
                           & 0     & 7      & $\Im[z_{6}^{4} \overline{z}_{10}^{2} \overline{z}_{4}]$                    \\
                           & 0     & 7      & $\Im[z_{12}^{3} \overline{z}_{10}^{3} \overline{z}_{6}]$                   \\
                           & 0     & 7      & $ \Im[z_{10}^{3} \overline{z}_{12} \overline{z}_{6}^{3}]$                  \\
                           & 0     & 7      & $\Im[z_{12}^{3} \overline{z}_{10}^{2} \overline{z}_{8}^{2}]$               \\
                           & 0     & 7      & $\Im[z_{10}^{3} \overline{z}_{6} \overline{z}_{8}^{3}]$                    \\
                           & 0     & 7      & $\Im[z_{10}^{2} z_{12} \overline{z}_{8}^{4}]$                              \\
                           & 0     & 7      & $\Im[z_{8}^{3} \overline{z}_{10}^{2} \overline{z}_{2}^{2}]$                \\
                           & 0     & 7      & $\Im[z_{10}^{2} z_{8} \overline{z}_{12}^{2} \overline{z}_{2}^{2}]$         \\
                           & 0     & 7      & $ \Im[z_{10}^{3} \overline{z}_{12}^{2} \overline{z}_{2} \overline{z}_{4}]$ \\
                           & 0     & 8      & $ \Im[z_{10}^{3} \overline{z}_{6}^{5}]$                                    \\
                           & 0     & 8      & $ \Im[z_{8}^{4} \overline{z}_{10}^{3} \overline{z}_{2}]$                   \\
                           & 0     & 8      & $\Im[z_{10}^{4} \overline{z}_{12}^{3} \overline{z}_{4}]$                   \\
                           & 0     & 8      & $\Im[z_{10}^{3} \overline{z}_{12}^{2} \overline{z}_{2}^{3}]$               \\
                           & 0     & 8      & $ \Im[z_{10}^{3} z_{8} \overline{z}_{12}^{3} \overline{z}_{2}]$            \\
                           & 0     & 9      & $ \Im[z_{10}^{4} \overline{z}_{8}^{5}]$                                    \\
                           & 0     & 9      & $\Im[z_{10}^{4} \overline{z}_{12}^{3} \overline{z}_{2}^{2}]$               \\
                           & 0     & 9      & $ \Im[z_{12}^{4} \overline{z}_{10}^{4} \overline{z}_{8}]$                  \\
                           & 0     & 10     & $\Im[z_{10}^{5} \overline{z}_{12}^{4} \overline{z}_{2}]$                   \\
                           & 0     & 11     & $\Im[z_{12}^{5} \overline{z}_{10}^{6}]$                                    \\
    \bottomrule
  \end{tabular}
\end{subtable}

\end{table}

\begin{table}[h]
  \caption{Isotropic products of hemitropic invariants of $\Sym^{12}(\RR^2)$}
  \label{tab:S12-SO2-hemitropic-products}
  \setcounter{mtligne}{109}

\begin{subtable}[T]{.5\linewidth}
  \centering
  \scriptsize
  \begin{tabular}{>{\stepcounter{mtligne} \themtligne}cccl}
    \toprule
    \multicolumn{1}{c}{\#} & order & degree & Formula                                                                          \\
    \midrule
                           & 0     & 6      & $ \Im[z_{4}\overline{z}_{2}^{2} ] \Im[z_{12} \overline{z}_6^{2}]$                \\
                           & 0     & 6      & $\Im[z_{12} \overline{z}_6^{2}] \Im[z_{8} \overline{z}_{4}^{2} ]$                 \\
                           & 0     & 6      & $\Im[ z_{12}\overline{z}_{10} \overline{z}_{2}] \Im[z_{8}\overline{z}_{4}^{2} ]$ \\
                           & 0     & 6      & $ \Im[z_{12} \overline{z}_6^{2}] \Im[z_{10} \overline{z}_{2} \overline{z}_8]$    \\
    \bottomrule
  \end{tabular}
\end{subtable}%

\end{table}

\clearpage

\section{Conclusion}

In this paper, we have formulated, with full details, a method to compute a minimal integrity basis for the invariant algebra of any 2D constitutive tensor (and more generally of any linear representation of the orthogonal groups $\SO(2)$ or $\OO(2)$). These results are formalized as theorem~\ref{thm:SO2-minimal-integrity-basis} and theorem~\ref{thm:O2-integrity-basis}. In the second case, a minimal integrity basis is obtained after applying a ``cleaning algorithm'' which is explained in~\autoref{sec:algorithm}. Besides, several reduction lemmas have been proven in~\autoref{sec:integrity-bases}, which allow to reduce \textit{a priori} the complexity of the computations required to obtain minimality.

Meanwhile, a new paradigm has also been introduced: the concept of \emph{polynomial covariant} which extends the idea of a polynomial invariant. This notion is not really new in mathematics, since it goes back to the early ages of Classical Invariant Theory in the nineteenth century. However, it was not clear how to formulate this concept in the framework of tensor spaces rather than binary forms, for which it was introduced first. This task is now achieved and the covariant algebra of a representation $\VV$ of $G$ (where $ G=\OO(2)$ or $ G=\SO(2)$), is defined as
\begin{equation*}
  \cov(\VV, G) := \inv(\VV\oplus \RR^{2}, G).
\end{equation*}
In other words, it is defined as the invariant algebra of the considered vector space $\VV$ (and thus, in particular, for any constitutive tensor), to which is added a vector space $\RR^2$ (\textit{i.e.} a vector $\xx=(x,y)$). This concept, as exotic as it may sound first for a non specialist, has proven to be much more useful than the invariant algebra itself to solve, for instance, such a problem as the characterization of symmetry classes of the elasticity tensor~\cite{OKDD2022} in a simple manner (compared to invariant characterization of only its fourth-order harmonic part in~\cite{AKP2013}). It is implicit in the work of \cite{BKO1994} and more generally in the theory of tensorial representations \cite{Boe1987}.

Therefore, we have computed minimal integrity bases for covariant algebras rather than invariant algebras, and this for an exhaustive list of constitutive tensors and laws (a minimal basis for the invariant algebra is however immediately deduced from a minimal basis of the covariant algebra but the converse is, of course, not true). The proposed algorithm has proven to be very effective and this is illustrated by the fact that we have been able to compute a minimal integrity basis for the covariant algebra of the following constitutive tensors and laws: all third order tensors (including the piezoelectricty tensor), all fourth-order tensors  (including the elasticity, the Eshelby and the photoelasticity tensors), the linear viscoelasticity law, the Hill elasto-plasticity constitutive equations, the linear (coupled) piezoelectricity law, and (by theorem \ref{thm:subspace-reduction}) any even order symmetric tensors up to order 12.

Our method relies first on an explicit \emph{harmonic decomposition} of the given tensor space. The means to achieve this first task are explained with full details in~\autoref{sec:explicit-harmonic-decomposition} and~\autoref{sec:Sym-harmonic-decomposition}. This allows us then to parameterize a tensor by an $n$-uple of complex numbers $(z_{1}, \dotsc ,z_{r})$ together with some real parameters $\lambda_{k}$ (some isotropic invariants) and $\xi_{i}$ (some hemitropic invariants) but which do not enter explicitly in the computing process. A finite integrity basis is then obtained by solving a Diophantine equation which must be satisfied by the exponents of the monomials
\begin{equation*}
  \bm = z_{1}^{\alpha_{1}}\dotsm z_{r}^{\alpha_{r}}\zb_{1}^{\beta_{1}}\dotsm \zb_{r}^{\beta_{r}}.
\end{equation*}
Integrity bases are thus formulated first using these complex variables (together with the $\lambda_{k}$ and $\xi_{i}$). Moreover, a rigourous and quite exhaustive process has been achieved in~\autoref{sec:monomials-versus-tensors-invariants} to translate all these expressions into tensorial ones, since those are familiar to the mechanical community. Minimal integrity bases for most common constitutive tensors and laws have been expressed this way.

\appendix

\section{An explicit harmonic decomposition}
\label{sec:explicit-harmonic-decomposition}

In this appendix, we propose a general method to obtain explicitly an harmonic decomposition of any linear representation $\VV$ of the orthogonal groups $\SO(2)$ or $\OO(2)$, the main application remaining the special case of a tensorial representation. The method exposed here is rather simple, since it requires only the diagonalization of a matrix. It is however limited to dimension 2 and cannot be generalized to dimension 3 or higher. This is not the only procedure to obtain an explicit harmonic decomposition, other approaches are discussed, for instance, in \cite{ZZDR2001}. The procedure described here follows moreover the same lines as Verchery's original construction for the elasticity tensor~\cite{Ver1982}, and applied later to the piezoelectricity tensor by Vannucci~\cite{Van2007}. The methodology consists in the following steps.

(1) Consider first the linearized action of $\SO(2)$ on $\VV$, or more precisely, the induced representation $\rho^{\prime}$ of the Lie algebra $\mathfrak{so}(2)$ of $\SO(2)$ on $\VV$. The Lie algebra, $\mathfrak{so}(2)$, is one-dimensional and spanned by
\begin{equation*}
  u : = \left.\frac{d}{\mathrm{d}\theta}\right|_{\theta = 0} r_{\theta} =
  \begin{pmatrix}
    0 & -1 \\
    1 & 0
  \end{pmatrix}.
\end{equation*}
This action is given by
\begin{equation*}
  \rho^{\prime}(u) \vv = \left.\frac{d}{\mathrm{d}\theta}\right|_{\theta = 0} \rho(r_{\theta})\vv .
\end{equation*}

(2) Using an $\SO(2)$-invariant inner-product $\langle\cdot,\cdot\rangle$ on $\VV$, we have
\begin{equation*}
  \langle \rho(g)\vv_{1},\rho(g)\vv_{2}\rangle = \langle \vv_{1},\vv_{2}\rangle,
\end{equation*}
for all $\vv_{1},\vv_{2}\in \VV$ and for all $g\in \SO(2)$ and thus
\begin{equation*}
  \langle \rho^{\prime}(u)\vv_{1},\vv_{2}\rangle = - \langle \vv_{1},\rho^{\prime}(u)\vv_{2}\rangle.
\end{equation*}
In other words, $\rho^{\prime}(u)$ is a skew-symmetric linear endomorphism of $\VV$ relatively to this invariant inner product.

\begin{rem}
  An $\OO(2)$-invariant inner-product on $\TT^{n}(\RR^{2})$ is given, using \eqref{eq:rcontract}, by
  \begin{equation*}
    \langle \bT_{1},\bT_{2}\rangle : =  \bT_{1}\rdots{n} \bT_{2}
  \end{equation*}
  and can be extended into an hermitian product on the complexification $\left(\TT^{n}\right)^{\CC} := \TT^{n} \oplus i\TT^{n}$  of $\TT^{n}(\RR^{2})$, by
  \begin{equation*}
    \langle \bT_{1},\bT_{2}\rangle : =  \bT_{1}\rdots{n} \overline \bT_{2}.
  \end{equation*}
  The matrix representation $[\rho^{\prime}(u)]$ of $\rho^{\prime}(u)$ in the basis $(\ee_{i_{1}}\otimes \dotsc \otimes \ee_{i_{n}})$ of $\TT^{n}(\RR^{2})$ is thus obtained using the expression
  \begin{equation}\label{eq:Lie_Alg_Rep_On_Tens}
    \left.\frac{d}{\mathrm{d}\theta}\right|_{\theta = 0} \rho(r_{\theta}) \left(\ee_{i_{1}}\otimes \dotsc \otimes \ee_{i_{n}}\right)= \sum_{k = 1}^{n} \ee_{i_{1}}\otimes \dotsc \otimes u\ee_{i_{k}}\otimes \dotsc \otimes \ee_{i_{n}},
  \end{equation}
  so that
  \begin{equation}\label{eq:rhoprimu}
    (\rho'(u)\bT)_{i_{1}\dots i_{n} }=\sum_{k=1}^{n} u_{i_{k} j_{k}} T_{i_{1} \dotsc i_{k-1}  j_{k} i_{k+1} \dotsc i_{n}}.
  \end{equation}
  In practice, we usually consider representations which are stable subspaces $\VV$ of $\TT^{n}(\RR^{2})$ rather than $\TT^{n}(\RR^{2})$ itself. Typically, $\VV$ is a subspace of $\TT^{n}(\RR^{2})$ defined by some index symmetries. An orthonormal basis of $\VV$ consists then of linear combinations of $\ee_{i_{1}}\otimes \dotsc \otimes \ee_{i_{n}}$ (see examples~\ref{ex:S2-harmonic-decomposition}, \ref{ex:T3-harmonic-decomposition} and \ref{ex:P3-harmonic-decomposition}, for instance).
\end{rem}

(3) Since $\rho'(u)$ is skew-symmetric, its eigenvalues are pure imaginary complex numbers. Moreover, since we know \textit{a priori} that $\rho$ decomposes into harmonic factors, we can conclude that these eigenvalues write $i\, n$, where $n \in \ZZ$ are relative integers. Each vanishing eigenvalue corresponds to a factor $\HT{0}$, while each pair of eigenvalues $(i\,n,-i\,n)$ ($n \ge 1$) corresponds to a factor $\HT{n}$ (with possible multiplicity). Therefore, the de-complexification of an orthonormal basis (with respect to the hermitian product on the complexification space $\VV^{\CC} := \VV \oplus i\VV$) of eigenvectors for $\rho'(u)$ provides an explicit harmonic decomposition relatively to $\SO(2)$. Let denote by $\bU_{k}$ the (real) unit eigenvectors associated with the corresponding vanishing eigenvalues, by $\bW_{l}$ the (complex) unit eigenvectors associated with the eigenvalues $i\, n_{l}$ (when $n_{l} \ge 1$) and by $\overline{\bW_{l}}$, their complex conjugates, associated with the eigenvalue $-i\, n_{l}$.

\begin{rem}
  Note that each real unit eigenvector $\bU_{k}$ is defined up to a sign whereas each \emph{complex} unit eigenvector $\bW_{l}$ is defined up to a phase or in other words up to a complex number $e^{i\psi_{l}}$. This ambiguity is well-known in Quantum Mechanics and discussed, for instance, in~\cite[section II.B.1]{AS1999}.
\end{rem}

(4) When a representation of $\OO(2)$ is involved, one starts by calculating an harmonic decomposition relatively to the subgroup $\SO(2)$ and then checks, for each vanishing eigenvalue, whether the corresponding eigenvector $\bU_{k}$ satisfies $\rho(\sigma)\bU_{k} = \bU_{k}$ or $\rho(\sigma)\bU_{k} = -\bU_{k}$. In the first case, such an eigenvector will still be denoted by $\bU_{k}$ but in the second case, it will be changed to $\bV_{i}$.
Note now that
\begin{equation*}
  \rho(r_\theta)\bU_{k} = \bU_{k}, \qquad \rho(r_\theta)\bV_{i} = \bV_{i}, \qquad \rho(r_\theta)\bW_{l} = e^{in_{l} \theta} \bW_{l},
\end{equation*}
independently of the choice of the eigenvectors $\bU_{k}$, $\bW_{l}$. However, the problem is slightly more subtle for $\sigma$. Indeed, the $\CC$-linear extension of the representation $\rho$ satisfies
\begin{equation*}
  \rho(\sigma)\bU_{k} = \bU_{k}, \qquad \rho(\sigma)\bV_{i} = -\bV_{i}, \qquad \rho(\sigma)\bW_{l} = \overline{\bW}_{l} ,
\end{equation*}
only if the arbitrary phase $\psi_{l}$ in the choice of $\bW_{l}$ has been chosen so that
\begin{equation*}
  \rho(\sigma)\bW_{l} = \overline{\bW}_{l} ,
\end{equation*}
which happens only for two values of $\psi_{l}$ (which differ by $\pi$). This must be calculated in order to produce an explicit harmonic decomposition relative to the group $\OO(2)$ and fixes, by the way, the ambiguity. In the following examples, this choice has been made.

Finally, one ends up with the following orthonormal decompositions of $\bT\in \VV \subset \TT^{n}(\RR^2)$.
\begin{description}
  \item[$\SO(2)$-decomposition]
    \begin{equation*}
      \bT = \sum_{k=1}^{\nu_{0}} \lambda_{k} \bU_{k}
      + \sum_{l=1}^{r} \left( \overline{z_{l}} \bW_{l} + z_{l} \overline{\bW}_{l}\right),
    \end{equation*}
  \item[$\OO(2)$-decomposition]
    \begin{equation*}
      \bT=\sum_{k=1}^{m_{0}} \lambda_{k} \bU_{k}+\sum_{i=1}^{m_{-1}} \xi_{i} \bV_{i}
      + \sum_{l=1}^{r} \left( \overline{z_{l}} \bW_{l} + z_{l} \overline{\bW}_{l}\right),
    \end{equation*}
\end{description}
where
\begin{equation*}
  \lambda_{k} = \langle \bT, \bU_{k} \rangle \in \RR
  ,\qquad
  \xi_{i} = \langle \bT, \bV_{i} \rangle \in \RR
  ,\qquad
  z_{l} = \langle \bT, \overline{\bW}_{l} \rangle \in \CC.
\end{equation*}

\begin{rem}\label{rem:scaling-factors}
  Writing $z_{l} = a_{l} + i b_{l}$, we get
  \begin{equation*}
    \overline{z_{l}} \bW_{l} + z_{l} \overline{\bW}_{l}\ =  2\left(a_{l} \Re(\bW_{l}) + b_{l}\Im(\bW_{l})\right).
  \end{equation*}
  The factor $2$ here is meaningless, since the harmonic components are defined up to a scaling factor. The basis, $(\Re(\bW_{l}),\Im(\bW_{l}))$ of the factor $\HT{n_{l}}$ in the decomposition of $\bT$ is orthogonal but not orthonormal, since
  \begin{equation*}
    \norm{\Re(\bW_{l})} = \norm{\Im(\bW_{l})} = \frac{1}{\sqrt{2}}.
  \end{equation*}
  But the same is true for the basis $(\bK_{1}^{(n)}, \bK_{2}^{(n)})$ of $\HT{n}$, as defined in~\eqref{eq:Hn-tensorial-basis}, it is orthogonal but not orthonormal, since
  \begin{equation*}
    \norm{\bK_{1}^{(n)}} = \norm{\bK_{2}^{(n)}} = 2^{\frac{n-1}{2}}.
  \end{equation*}
\end{rem}

\begin{exa}\label{ex:S2-harmonic-decomposition}
  Let $\ba\in \VV =\Sym^2(\RR^2)\subset  \TT^2(\RR^2)$ be a symmetric second order tensor and $\ee_{1}\otimes \ee_{1}$, $\ee_{2}\otimes \ee_{2}$,
  $\frac{1}{\sqrt{2}}\left(\ee_{1}\otimes \ee_{2}+\ee_{2}\otimes \ee_{1}\right)$, be an orthonormal basis of $\VV$. Then $\rho(g)\ba=g \ba g^t$  writes
  $ [\rho(g)\ba] = [\rho(g)][\ba]$ with $[\ba]=(a_{11}, a_{22}, \sqrt{2}\, a_{12})^t$ and
  \begin{equation*}
    [\rho(r_{\theta})]=\left(
    \begin{array}{ccc}
        \cos^2 \theta                  & \sin^2 \theta                   & -\frac{\sin 2 \theta}{\sqrt{2}} \\
        \sin^2 \theta                  & \cos^2 \theta                   & \frac{\sin 2 \theta}{\sqrt{2}}  \\
        \frac{\sin 2 \theta}{\sqrt{2}} & -\frac{\sin 2 \theta}{\sqrt{2}} & \cos 2 \theta                   \\
      \end{array}
    \right),
    \quad
    [\rho'(u)]=\left(\begin{array}{ccc}
        0        & 0         & -\sqrt{2} \\
        0        & 0         & \sqrt{2}  \\
        \sqrt{2} & -\sqrt{2} & 0         \\
      \end{array}
    \right).
  \end{equation*}
  The eigenvalues of $[\rho'(u)]$ read $0, 2i, -2i$ and the associated eigenvectors are respectively
  $[\bU]=(U_{11},  U_{22}, \sqrt{2}\, U_{12})^t=(\frac{1}{\sqrt{2}},\frac{1}{\sqrt{2}}, 0)^t$, $[\bW_{2}]=((\bW_{2})_{11}, (\bW_{2})_{22},  \sqrt{2}\, (\bW_{2})_{12})^t=(\frac{1}{2},-\frac{1}{2}, -\frac{i}{\sqrt{2}})^t$ and $[\overline \bW_{2}]$, so that in $\OO(2)$ case
  \begin{equation*}
    \lambda=\langle \ba, \bU\rangle=\frac{1}{\sqrt{2}} \tr \ba,
    \qquad
    z_{2}= \langle \ba, \bW_{2} \rangle =a_{11}'+i a_{12},
  \end{equation*}
  where $a_{11}'=\frac{1}{2}(a_{11}-a_{22})$ is the first component of the deviatoric part $\ba'=\ba-\frac{1}{2} \left(\tr \ba\right) \id$.
\end{exa}

\begin{exa}\label{ex:T3-harmonic-decomposition}
  Let $\VV = \TT^{3}(\RR^{2})$. Then, an orthonormal basis is given by
  \begin{align*}
    \be_{111} & = \ee_{1}\otimes \ee_{1}\otimes \ee_{1}, &  & \be_{221} = \ee_{2}\otimes \ee_{2}\otimes \ee_{1}, &  & \be_{122} = \ee_{1}\otimes \ee_{2}\otimes \ee_{2}, \\
    \be_{212} & = \ee_{2}\otimes \ee_{1}\otimes \ee_{2}, &  & \be_{222} = \ee_{2}\otimes \ee_{2}\otimes \ee_{2}, &  & \be_{112} = \ee_{1}\otimes \ee_{1}\otimes \ee_{2}, \\
    \be_{211} & = \ee_{2}\otimes \ee_{1}\otimes \ee_{1}, &  & \be_{121} = \ee_{1}\otimes \ee_{2}\otimes \ee_{1},
  \end{align*}
  where $(\ee_{1},\ee_{2})$ is an orthonormal basis of $\RR^{2}$. Using \eqref{eq:rhoprimu} one obtains
  \begin{equation}\label{eq:rhopuT3}
    (\rho'(u)\bT)_{ijk}=u_{ip}T_{pjk}+u_{jp}T_{ipk}+u_{kp}T_{ijp}.
  \end{equation}
  Setting
  \begin{equation*}
    [\bT]=(T_{111}, T_{221}, T_{122}, T_{212}, T_{222}, T_{112}, T_{211}, T_{121})^t,
  \end{equation*}
  and the same for $ [\rho'(u)\bT]$ (as well as for the eigenvectors $[\bW_{n_{k}}]$ of $[\rho'(u)]$),
  one gets $[\rho'(u)\bT]=[\rho'(u)][\bT]$, where
  \begin{equation*}
    [\rho'(u)]= \begin{pmatrix}
      0 & 0  & 0  & 0  & 0  & -1 & -1 & -1 \\
      0 & 0  & 0  & 0  & -1 & 0  & 1  & 1  \\
      0 & 0  & 0  & 0  & -1 & 1  & 0  & 1  \\
      0 & 0  & 0  & 0  & -1 & 1  & 1  & 0  \\
      0 & 1  & 1  & 1  & 0  & 0  & 0  & 0  \\
      1 & 0  & -1 & -1 & 0  & 0  & 0  & 0  \\
      1 & -1 & 0  & -1 & 0  & 0  & 0  & 0  \\
      1 & -1 & -1 & 0  & 0  & 0  & 0  & 0  \\
    \end{pmatrix}.
  \end{equation*}
  $[\rho'(u)]$ is indeed skew-symmetric, with eigenvalues $i, i, i, 3i$ and their four conjugates, with corresponding eigenvectors
  $[\bW_{1a}]=\frac{1}{2}(1,0,0,1,-i,0,0,-i)^t$, $[\bW_{1b}]=\frac{1}{2 \sqrt{3}}(1,0,2,-1,-i,0,-2 i,i)^t$, $[\bW_{1c}]=\frac{1}{2 \sqrt{6}}(1,3,-1,-1,-i,-3 i,i,i)^t$, $[\bW_{3}]=\frac{1}{2 \sqrt{2}}(1,-1,-1,-1,i,-i,-i,-i)^t$ and their four conjugates. This corresponds to an harmonic decomposition $3 \HT{1}\oplus \HT{3}$ of $\TT^{3}(\RR^{2})$, where
  \begin{align*}
    z_{1a} & = \langle \bT, \bW_{1a} \rangle = \frac{1}{2}(T_{111}+T_{212})+\frac{i}{2}(T_{222}+T_{121}),                                                            \\
    z_{1b} & = \langle \bT,  \bW_{1b} \rangle = \frac{1}{2 \sqrt{3}}(T_{111}+2T_{122}- T_{212}) - \frac{i}{2 \sqrt{3}}(T_{222}+ 2T_{211} -T_{121}),
    \\
    z_{1c} & = \langle \bT, \bW_{1c} \rangle = \frac{1}{2 \sqrt{6}}(T_{111} + 3T_{221}-T_{122}-T_{212})+ \frac{i}{2 \sqrt{6}}(T_{222}+ 3 T_{112} -T_{211} -T_{121}),
    \\
    z_{3}  & = \langle \bT, \bW_{3}\rangle = \frac{1}{2\sqrt{2}}(T_{111}- T_{221}- T_{122}- T_{212}) + \frac{i}{2\sqrt{2}}(-T_{222}+ T_{112}+ T_{211}+ T_{121}).
  \end{align*}
\end{exa}

\begin{exa}\label{ex:P3-harmonic-decomposition}
  Let $\VV = \Piez\subset \TT^{3}(\RR^{2})$ be the vector space of 2D piezoelectricity tensors $\bP$, where $P_{ijk}=P_{ikj}$. An orthonormal basis is given by
  \begin{align*}
    \be_{111},                                 &  & \be_{122}, &  & \frac{1}{\sqrt{2}} (\be_{212} +\be_{221}), &  &
    \frac{1}{\sqrt{2}} (\be_{112} +\be_{121}), &  & \be_{211}
    ,                                          &  & \be_{222}.
  \end{align*}
  The induced representation of the Lie algebra $\mathfrak{so}(2)$ on $\Piez$ still writes as~\eqref{eq:rhopuT3}. We now set
  \begin{equation*}
    [\bP]=(P_{111}, P_{122},  \sqrt{2} P_{212},P_{222},P_{211},\sqrt{2} P_{112} )^t,
  \end{equation*} and the same for $ [\rho'(u)\bP]$ (and the $[\bW_{n_{k}}]$), so that
  \begin{equation*}
    [\rho'(u)]=
    \begin{pmatrix}
      0        & 0         & 0         & 0         & -1       & -\sqrt{2} \\
      0        & 0         & 0         & -1        & 0        & \sqrt{2}  \\
      0        & 0         & 0         & -\sqrt{2} & \sqrt{2} & 1         \\
      0        & 1         & \sqrt{2}  & 0         & 0        & 0         \\
      1        & 0         & -\sqrt{2} & 0         & 0        & 0         \\
      \sqrt{2} & -\sqrt{2} & -1        & 0         & 0        & 0         \\
    \end{pmatrix}
  \end{equation*}
  which is indeed skew-symmetric, with eigenvalues $i, i, 3i$ and their three conjugates with corresponding eigenvectors $[\bW_{1a}]=\frac{1}{\sqrt{6}}(\sqrt{2},0,1,-i\sqrt{2},0,-i)^t$, $[\bW_{1b}]=\frac{1}{2\sqrt{6}}(1,3 ,- \sqrt{2},-i,-3i,i\sqrt{2})$, $[\bW_{3}]=\frac{1}{2\sqrt{2}}(1,-1,- \sqrt{2},i,-i,-i\sqrt{2})^t$ and their three conjugates. This corresponds to an harmonic decomposition $2 \HT{1}\oplus \HT{3}$ of $\Piez$, where
  \begin{align*}
    z_{1a} & = \langle \bT, \bW_{1a} \rangle = \frac{1}{\sqrt{3}} (P_{111}  + P_{212})+\frac{i}{\sqrt{3}}(P_{222}+P_{112} ) ,
    \\
    z_{1b} & = \langle \bT, \bW_{1b} \rangle = \frac{1}{2 \sqrt{6}}(P_{111} +3 P_{122}- 2 P_{212}) + \frac{i}{2 \sqrt{6}}(P_{222}+ 3 P_{211} -2 P_{112} ),
    \\
    z_{3}  & = \langle\bT, \bW_{2} \rangle=  \frac{1}{2\sqrt{2}}(P_{111} -P_{122}-2 P_{212})+\frac{i}{2\sqrt{2}} (-P_{222}+ P_{211}+2 P_{112}).
  \end{align*}
\end{exa}

\section{Harmonic decomposition of a totally symmetric tensor}
\label{sec:Sym-harmonic-decomposition}

The harmonic decomposition of an homogeneous polynomial $\rp$ of degree $n$ is unique and writes
\begin{equation}\label{eq:HDecpol}
  \rp = \rh_{0} + \qq \rh_{1} + \dotsb+ \qq^{r}\rh_{r},
\end{equation}
where $r := \lfloor n/2 \rfloor$ (with $\lfloor x \rfloor$ the floor function), $\qq(\xx) := x^{2}+y^{2}$ and $\rh_{k}\in \HP{n-2k}$ are harmonic polynomials of respective degree $n-2k$. These harmonic functions can be calculated iteratively using the relation
\begin{equation*}
  \triangle^{r}(\qq^{r}\rh) = 4^{r}\frac{r!(n+r)!}{n!}\rh,
\end{equation*}
where $\rh$ is an homogeneous harmonic polynomial of degree $n$ and $r \ge 0$. More precisely, we have
\begin{equation*}
  \rh_{r} = \left\{
  \begin{array}{ll}
    \displaystyle{\frac{4^{-r}}{(r!)^{2}}}\triangle^{r} \rp, & \hbox{if $n$ is even;} \\
    \displaystyle{\frac{4^{-r}}{r!(r+1)!}}\triangle^{r} \rp, & \hbox{if $n$ is odd.}
  \end{array}
  \right.
\end{equation*}
Then, we compute inductively $\rh_{k}$, for $k = r-1, r-2, \dotsc$, using the relation
\begin{equation}\label{eq:hk-poly}
  \rh_{k} = 4^{-k}\frac{(n-2k)!}{k!(n-k)!} \triangle^{k}\left(\rp - \sum_{j=k+1}^{r} \qq^{j} \rh_{j}\right)
\end{equation}
which leads to $\rh_{0}$ after $r$ iterations. The harmonic function $\rh_{0}$ (of degree $n$) is called the \emph{leading harmonic part} of $\rp$ {and will be denoted by $\rp^{0}$ when necessary.}

Using now the correspondence between totally symmetric tensors and homogeneous polynomials, we set $\rh_{k}=\phi(\bH_{k})$ and we get
\begin{equation*}
  \phi(\id) = \qq, \quad \phi(\bH_{k})= \rh_{k}, \quad \phi(\underbrace{\id\odot \dotsm \odot \id}_\textrm{$k$ copies}\odot  \bH_{k}) = \qq^k \rh_{k} \quad \textrm{(no sum)}.
\end{equation*}
We deduce then the harmonic decomposition of a totally symmetric tensor $\bS\in \Sym^{n}(\RR^{2})$
\begin{equation}\label{eq:S-harmonic-decomposition}
  \bS = \bH_{0} + \id \odot \bH_{1} + \dotsb + \underbrace{\id\odot \dotsm \odot \id}_\textrm{$r$ copies} \odot \bH_{r}.
\end{equation}
Moreover, using the identity $\phi(\tr^{r} \bS) = \frac{(n-2r)!}{n!}\Delta^{r}\phi(\bS)$, valid for every symmetric tensor $\bS$ or order $n$, we can explicit the factor $\bH_{k}$, where we have set $\id^{\odot j}=\id\odot \dotsm \odot \id$ for $j$ copies of $\id$. Indeed,
\begin{equation*}
  \bH_{r} = \left\{
  \begin{array}{ll}
    \displaystyle 4^{-r}\frac{n!}{(r!)^{2}}\tr^r\bS, & \hbox{if $n$ is even;} \\
    \displaystyle 4^{-r}\frac{n!}{r!(r+1)!}\tr^r\bS, & \hbox{if $n$ is odd;}
  \end{array}
  \right.
\end{equation*}
and
\begin{equation*}
  \bH_{k}=4^{-k}\binom{n}{k} \tr^{k}\left( \bS-\sum_{j=k+1}^{r} \id^{\odot j}\odot \bH_{j} \right).
\end{equation*}
The \emph{leading harmonic part} $\bS':=\bH_{0}$ of $\bS$, such that $\phi(\bS^{\prime}) = \phi(\bS)^{0}$, is an harmonic tensor in $\HT{n}$.

\section{The Hilbert series of an $\OO(2)$-representation}
\label{sec:Hilbert-series}

Consider a linear representation $(\VV,\rho)$ of a compact group $G$ which splits into a direct sum of stable subspaces
\begin{equation*}
  \VV = \VV_{1} \oplus \dotsb \oplus \VV_{p}.
\end{equation*}
Then, each $\vv \in \VV$ can be decomposed uniquely as $\vv = (\vv_{1},\dotsc,\vv_{p})$ and each invariant polynomial writes
\begin{equation*}
  p(\vv) = p(\vv_{1},\dotsc,\vv_{p}),\quad \vv_{i}\in \VV_{i}.
\end{equation*}
Thus, the invariant algebra $\inv(\VV, G)$ inherits a multi-graduation
\begin{equation*}
  \inv{(\VV, G)} = \bigoplus_{k_{1}, \dotsc , k_{p}} \inv_{k_{1}, \dotsc , k_{p}}(\VV, G),
\end{equation*}
where $\inv_{k_{1}, \dotsc , k_{p}}(\VV, G)$ is the \emph{finite dimensional subspace} of multi-homogeneous invariant polynomials $p$ which have degree $k_{1}$ in $\vv_{1}$, $k_{2}$ in $\vv_{2}$, \ldots , and $k_{p}$ in $\vv_{p}$. The dimensions of these vector spaces are encoded into the \emph{multivariate Hilbert series}
\begin{equation}\label{eq:Hilbert-series}
  H_{\rho}(t_{1},\dotsc,t_{p}) = \sum_{k_{1}, \dotsc , k_{p}} a_{k_{1}, \dotsc , k_{p}} \, t_{1}^{k_{1}} \dotsm t_{p}^{k_{p}},
\end{equation}
where
\begin{equation*}
  a_{k_{1}, \dotsc , k_{p}} : =  \dim \inv_{k_{1}, \dotsc , k_{p}}(\VV, G).
\end{equation*}
The remarkable fact is that the Hilbert series is a rational function that can be calculated \textit{a priori}, using the \emph{Molien--Weyl formula}~\cite{Ste1994,Stu2008}.

\begin{thm}[Molien--Weyl formula]\label{thm:Molien--Weyl}
  The multi-graded \emph{Hilbert series}~\eqref{eq:Hilbert-series} of $\inv(\VV,G)$ writes as
  \begin{equation*}
    H_{\rho}(t_{1},\dotsc,t_{p}) = \int_{G} \prod_{k = 1}^{p}\frac{1}{\det (I - t_{k}\rho_{k}(g))} \, \mathrm{d} \mu(g)
  \end{equation*}
  where $\mathrm{d} \mu$ is the \emph{Haar measure} on $G$ (see definition~\ref{def:Haar-measure}) and $\rho_{k}$ is the restriction of the representation to $\VV_{k}$.
\end{thm}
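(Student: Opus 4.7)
The plan is to prove the Molien--Weyl formula in essentially the classical way, by interpreting the dimension of each multi-homogeneous invariant space as a trace of the Reynolds operator restricted to a finite-dimensional piece, and then using the character identity for symmetric powers. I will use the multi-grading that $\RR[\VV]$ inherits from the decomposition $\VV = \VV_1 \oplus \dotsb \oplus \VV_p$ and which is preserved by the $G$-action, since $\rho$ acts linearly and leaves each $\VV_k$ stable.

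First, I would note that the Reynolds operator $R_G$ (defined in~\eqref{eq:Reynolds-projector}) restricts to a linear projector
$R_G : \RR[\VV]_{k_1,\dotsc,k_p} \to \inv_{k_1,\dotsc,k_p}(\VV,G)$
on each finite-dimensional multi-homogeneous piece. Since any projector $P$ on a finite-dimensional vector space satisfies $\tr(P) = \dim(\operatorname{Im} P)$, we immediately get
\begin{equation*}
  a_{k_1,\dotsc,k_p} = \dim \inv_{k_1,\dotsc,k_p}(\VV,G) = \tr\bigl(R_G \big|_{\RR[\VV]_{k_1,\dotsc,k_p}}\bigr).
\end{equation*}
By linearity of the trace and continuity of $g \mapsto g \star \cdot$ on this finite-dimensional space, this equals $\int_G \chi_{k_1,\dotsc,k_p}(g)\, d\mu(g)$, where $\chi_{k_1,\dotsc,k_p}(g)$ denotes the trace of the operator $p \mapsto g \star p$ on $\RR[\VV]_{k_1,\dotsc,k_p}$.

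Second, since the multi-grading factors as
$\RR[\VV]_{k_1,\dotsc,k_p} \simeq \bigotimes_{k=1}^p \RR[\VV_k]_{k_k} \simeq \bigotimes_{k=1}^p \operatorname{Sym}^{k_k}(\VV_k^*)$,
and the action $\star$ of $g$ respects this tensor decomposition, the character factorizes as
\begin{equation*}
  \chi_{k_1,\dotsc,k_p}(g) = \prod_{k=1}^p \tr\bigl(\operatorname{Sym}^{k_k} \rho_k^*(g)\bigr).
\end{equation*}
Summing over all multi-indices with formal variables $t_1,\dotsc,t_p$ and exchanging sum and integral (justified by termwise uniform convergence for $|t_k|$ small), I would obtain
\begin{equation*}
  H_\rho(t_1,\dotsc,t_p) = \int_G \prod_{k=1}^p \left(\sum_{k_k \ge 0} t_k^{k_k} \tr\bigl(\operatorname{Sym}^{k_k} \rho_k^*(g)\bigr)\right) d\mu(g).
\end{equation*}
The final step is the classical symmetric-power generating identity: if $\lambda_1,\dotsc,\lambda_{\dim \VV_k}$ are the eigenvalues of $\rho_k^*(g)$, then $\tr(\operatorname{Sym}^j \rho_k^*(g))$ is the $j$th complete symmetric polynomial in the $\lambda_i$, whose generating function is $\prod_i (1 - t_k \lambda_i)^{-1} = 1/\det(I - t_k \rho_k^*(g))$.

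The one subtlety worth singling out, and the step I would treat most carefully, is replacing $\rho_k^*$ by $\rho_k$ in the final answer. For $G = \OO(2)$ or $\SO(2)$, which are compact and preserve the standard inner product, each $\rho_k(g)$ is orthogonal, so $\rho_k^*(g) = \rho_k(g^{-1})^T$ has the same characteristic polynomial as $\rho_k(g^{-1})$. After integrating against the bi-invariant Haar measure $d\mu$ (see Definition~\ref{def:Haar-measure}), the change of variable $g \mapsto g^{-1}$ shows
\begin{equation*}
  \int_G \prod_{k=1}^p \frac{d\mu(g)}{\det(I - t_k \rho_k^*(g))} = \int_G \prod_{k=1}^p \frac{d\mu(g)}{\det(I - t_k \rho_k(g))},
\end{equation*}
which yields the stated formula. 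This invariance argument is really the only place where compactness and the specific structure of $G$ enter; everything else is formal linear algebra on finite-dimensional pieces.
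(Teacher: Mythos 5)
Your proof is correct, and it is the standard Molien--Weyl argument: trace of the Reynolds projector on each multi-graded piece, factorization of the character over the tensor decomposition $\bigotimes_k \operatorname{Sym}^{k_k}(\VV_k^{*})$, and the generating identity $\sum_j t^j \tr(\operatorname{Sym}^j A) = 1/\det(I-tA)$. The paper itself gives no proof of this theorem (it cites Sternberg and Sturmfels), so there is nothing internal to compare against; your argument is essentially the one those references supply. One small remark: the dual/inverse subtlety you single out can be dispatched even more directly, since for $G=\OO(2)$ or $\SO(2)$ each $\rho_k(g)$ is orthogonal, so $\rho_k(g)^{-1}=\rho_k(g)^{T}$ and hence $\det(I-t_k\rho_k(g)^{-1})=\det(I-t_k\rho_k(g))$ pointwise in $g$, making the change of variables $g\mapsto g^{-1}$ under the Haar measure unnecessary (though your version is also correct).
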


In the special case where $G=\OO(2)$ and
\begin{equation*}
  \VV \simeq \HT{n_{1}} \oplus \dotsb \oplus\HT{n_{r}},
\end{equation*}
which is the important case for us, we have the following more explicit result.

\begin{thm}\label{thm:O2-Hilbert-series}
  Let $(\VV,\rho)$ be a linear representation of $\OO(2)$ which decomposes as
  \begin{equation*}
    \VV \simeq \HT{n_{1}} \oplus \dotsb \oplus\HT{n_{r}},
  \end{equation*}
  where $1 \le n_{1} \le \dotsb \le n_{r}$. Then, the \emph{Hilbert series} of $\inv(\VV,\OO(2))$ writes as
  \begin{equation*}
    H_{\rho}( t_{1},\dotsm,t_{r}) = \frac{1}{2}\left\{\frac{1}{2\pi i} \int_{\abs{z} = 1} \prod_{k = 1}^{r} \frac{1}{(1-t_{k}z^{n_{k}})(1-t_{k}z^{-n_{k}})} \, z^{-1}\mathrm{d}z+ \frac{1}{(1-t_{1}^{2})\dotsm(1-t_{r}^{2})} \right\},
  \end{equation*}
  where
  \begin{equation*}
    \frac{1}{2\pi i} \int_{\abs{z} = 1} \prod_{k = 1}^{r} \frac{1}{(1-t_{k}z^{n_{k}})(1-t_{k}z^{-n_{k}})} \, z^{-1}\mathrm{d}z= \sum_{k_{1},\dotsc,k_{r}\geq 0} b_{k_{1},\dotsc,k_{r}}\,t_{1}^{k_{1}}\dotsm t_{r}^{k_{r}},
  \end{equation*}
  and
  \begin{equation*}
    b_{k_{1},\dotsc,k_{r}} = \frac{1}{2i\pi} \sum_{0 \le \alpha_{i} \le k_{i}} \int_{\abs{z} = 1} z^{(2\alpha_{1}-k_{1})n_{1} + \dotsb + (2\alpha_{r}-k_{r})n_{r}}\,z^{-1}\mathrm{d}z
  \end{equation*}
  is the number of solutions $(\alpha_{1}, \dotsc ,\alpha_{r})$ of the linear Diophantine equation
  \begin{equation}\label{eq:diopheq}
    2\alpha_{1}n_{1} + \dotsb + 2\alpha_{r}n_{r} = k_{1}n_{1} + \dotsb + k_{r}n_{r},\quad \alpha_{i}\geq 0 .
  \end{equation}
\end{thm}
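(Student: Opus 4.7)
The plan is to apply the Molien--Weyl formula (Theorem~\ref{thm:Molien--Weyl}) to $G=\OO(2)$, use the Haar measure decomposition of Definition~\ref{def:Haar-measure} to split the integral into a rotation part and a reflection part, and then evaluate each piece using the explicit matrix forms of $\rho_{n_k}(r_\theta)$ and $\rho_{n_k}(\sigma r_\theta)$ given in~\autoref{sec:representation-theory}. Finally, the count of solutions of the Diophantine equation~\eqref{eq:diopheq} will be extracted from a contour integral by expanding each factor as a geometric series.

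First, by the Haar measure formula the Molien--Weyl expression becomes
\begin{equation*}
  H_\rho(t_1, \dotsc, t_r) = \frac{1}{4\pi} \int_0^{2\pi} \prod_{k=1}^r \frac{1}{\det(I - t_k \rho_{n_k}(r_\theta))}\, d\theta + \frac{1}{4\pi} \int_0^{2\pi} \prod_{k=1}^r \frac{1}{\det(I - t_k \rho_{n_k}(\sigma r_\theta))}\, d\theta.
\end{equation*}
For the rotation part, the matrix $\rho_{n_k}(r_\theta)$ has (complex) eigenvalues $e^{\pm i n_k \theta}$, so $\det(I - t_k \rho_{n_k}(r_\theta)) = (1 - t_k e^{i n_k \theta})(1 - t_k e^{-i n_k \theta})$. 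Substituting $z = e^{i\theta}$, $d\theta = dz/(iz)$, transforms this integral into the contour integral on $|z|=1$ appearing in the statement, picking up a factor $\tfrac{1}{4\pi i} = \tfrac{1}{2} \cdot \tfrac{1}{2\pi i}$.

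For the reflection part, the key observation is that $\rho_{n_k}(\sigma r_\theta)$ has trace $\cos n_k \theta - \cos n_k \theta = 0$ and determinant $-\cos^2 n_k\theta - \sin^2 n_k \theta = -1$, independently of $\theta$. Hence $\det(I - t_k \rho_{n_k}(\sigma r_\theta)) = 1 - t_k^2$, and the integrand is constant in $\theta$, yielding
\begin{equation*}
  \frac{1}{4\pi} \cdot 2\pi \cdot \prod_{k=1}^r \frac{1}{1 - t_k^2} = \frac{1}{2} \cdot \frac{1}{(1-t_1^2) \dotsm (1-t_r^2)}.
\end{equation*}
Combining both parts gives the claimed formula.

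For the Diophantine interpretation of $b_{k_1, \dotsc, k_r}$, the plan is to expand each factor as a geometric series
\begin{equation*}
  \frac{1}{(1-t_k z^{n_k})(1-t_k z^{-n_k})} = \sum_{\alpha_k, \gamma_k \geq 0} t_k^{\alpha_k + \gamma_k}\, z^{n_k(\alpha_k - \gamma_k)},
\end{equation*}
then collect the coefficient of $t_1^{k_1} \dotsm t_r^{k_r}$ by setting $\gamma_k = k_k - \alpha_k$ (with $0 \leq \alpha_k \leq k_k$). The resulting series in $z$ has a nonzero residue at $z = 0$ precisely when the exponent $\sum_k n_k(2\alpha_k - k_k)$ vanishes, which is equation~\eqref{eq:diopheq}. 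Each such solution contributes $1$ to $b_{k_1, \dotsc, k_r}$, as asserted. The only place requiring care is the uniform convergence/interchange of sum and contour integral, which holds for $|t_k|$ small enough so that each geometric series converges absolutely on $|z|=1$; since the Hilbert series is a formal power series in the $t_k$'s, this is sufficient to identify coefficients term by term.
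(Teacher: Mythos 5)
Your proposal is correct and follows essentially the same route as the paper's proof: split the Molien--Weyl integral via the $\OO(2)$ Haar measure, note $\det(I-t_{k}\rho_{n_{k}}(\sigma r_{\theta}))=1-t_{k}^{2}$ so the reflection part is constant in $\theta$, convert the rotation part to a contour integral with $z=e^{i\theta}$, and read off $b_{k_{1},\dotsc,k_{r}}$ by geometric-series expansion and the residue condition, which is exactly the Diophantine equation~\eqref{eq:diopheq}.
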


\begin{rem}\label{rem:betak}
  Note that
  \begin{equation*}
    \frac{1}{(1-t_{1}^{2})\dotsm(1-t_{r}^{2})}
    = \sum_{\ell_{1},\dotsc, \ell_{r}\geq 0} t_{1}^{2 \ell_{1}}\dotsm t_{r}^{2\ell_r}
    = \sum_{k_{1},\dotsc, k_{r}\geq 0}\beta_{k_{1}, \dotsc, k_r} t_{1}^{k_{1}}\dotsm t_{r}^{k_r},
  \end{equation*}
  where $\beta_{k_{1}, \dotsc, k_r}=0$ when at least one of the $k_{i}$ is odd and $\beta_{k_{1}, \dotsc, k_r}=1$ otherwise.
\end{rem}

\begin{rem}\label{rem:full-O2-Hilbert-series}
  When the harmonic decomposition of the representation $(\VV,\rho)$ of $\OO(2)$ involves some components $\HT{-1}$ and/or $\HT{0}$, then one can easily obtain its Hilbert series by modifying the series provided in theorem~\ref{thm:O2-Hilbert-series}, using the general formula given in theorem~\ref{thm:Molien--Weyl} and observing that
  \begin{align*}
    \det (I - t_{0}\rho_{0}(r_{\theta}))   & = 1-t_{0},  & \det (I - t_{0}\rho_{0}(\sigma r_{\theta}))   & = 1-t_{0},
    \\
    \det (I - t_{-1}\rho_{-1}(r_{\theta})) & = 1-t_{-1}, & \det (I - t_{-1}\rho_{-1}(\sigma r_{\theta})) & = 1 + t_{-1},
  \end{align*}
  where $\rho_{0}$ and $\rho_{-1}$ are defined in~\autoref{sec:representation-theory}.
\end{rem}

\begin{rem}\label{rem:SO2-Hilbert-series}
  For a linear representation $\VV$ of $\SO(2)$, which decomposes as
  \begin{equation*}
    \VV \simeq \nu_0 \HT{0}\oplus \HT{n_{1}} \oplus \dotsb \oplus\HT{n_{r}},
  \end{equation*}
  where $1 \le n_{1} \le \dotsb \le n_{r}$, the Hilbert series of $\inv(\VV,\SO(2))$ writes
  \begin{equation*}
    H(t_{0a}, t_{0b}, \dotsc ,t_{1},\dotsc,t_{r})=\frac{1}{2\pi i}\frac{1}{(1-t_{0a})(1-t_{0b})\dotsm}  \int_{\abs{z} = 1} \prod_{k = 1}^{r} \frac{1}{(1-t_{k}z^{n_{k}})(1-t_{k}z^{-n_{k}})} \, z^{-1}\mathrm{d}z.
  \end{equation*}
\end{rem}

\begin{proof}[Proof of theorem~\ref{thm:O2-Hilbert-series}]
  Observe first that
  \begin{equation*}
    \det (I - t_{n}\rho_{n}(\sigma r_{\theta})) = 1-t_{n}^{2}, \quad \forall n \ge 1,
  \end{equation*}
  where the notation $\rho_{n}$ has been introduced in~\autoref{sec:representation-theory}. Now, from Molien--Weyl formulae (see theorem~\ref{thm:Molien--Weyl}) and the expression of the Haar measure on $\OO(2)$ (see definition~\ref{def:Haar-measure}),
  we deduce that
  \begin{equation*}
    H_{\rho}(t_{1},\dotsc,t_{r}) = \frac{1}{2} \left\{\mathcal{I}(t_{1},\dotsc,t_{r}) + \frac{1}{(1-t_{1}^{2})\dotsm(1-t_{r}^{2})} \right\},
  \end{equation*}
  where
  \begin{equation*}
    \mathcal{I}(t_{1},\dotsc,t_{r}) = \frac{1}{2\pi}\int_{0}^{2\pi}\prod_{k = 1}^{r}\frac{1}{\det (I - t_{k}\rho_{n_{k}}(r_{\theta}))}\,\mathrm{d}\theta.
  \end{equation*}
  But, for all $n\geq 1$, we have
  \begin{align*}
    \det(I-t\rho_{n}(r_{\theta})) & = 1 + t^{2}-2t\cos(n\theta)
    \\
                                  & = (1-t\mathrm{e}^{in\theta})(1-t\mathrm{e}^{-in\theta})
    \\
                                  & = (1-tz^{n})(1-tz^{-n}),\quad z = \mathrm{e}^{i\theta},
  \end{align*}
  and thus
  \begin{equation*}
    \mathcal{I}(t_{1},\dotsc,t_{r}) = \frac{1}{2\pi i} \int_{\abs{z} = 1} \prod_{k = 1}^{r} \frac{1}{(1-t_{k}z^{n_{k}})(1-t_{k}z^{-n_{k}})} \, z^{-1}\mathrm{d}z,
  \end{equation*}
  where
  \begin{align*}
    \prod_{k = 1}^{r} \frac{1}{(1-t_{k}z^{n_{k}})(1-t_{k}z^{-n_{k}})} & = \prod_{k = 1}^{r} \left( \sum_{i} (t_{k}z^{n_{k}})^{i}\right)\left( \sum_{j} (t_{k}z^{-n_{k}})^j\right)
    \\
                                                                      & = \sum_{\alpha_{1},\beta_{1},\dotsc,\alpha_{r},\beta_{r}} z^{(\alpha_{1}n_{1} + \dotsb + \alpha_{r}n_{r}) - (\beta_{1}n_{1} + \dotsb + \beta_{r}n_{r})} t_{1}^{\alpha_{1} + \beta_{1}}\dotsm t_{r}^{\alpha_{r} + \beta_{r}}.
  \end{align*}
  Setting $k_{i}:=\alpha_{i}+\beta_{i}$, this latest sum can now be recast as
  \begin{equation*}
    \sum_{k_{1}, \dotsc , k_{r}} \left(\sum_{0 \le \alpha_{l} \le k_{l}} z^{(2\alpha_{1}-k_{1})n_{1} + \dotsb + (2\alpha_{r}-k_{r})n_{r}} \right) t_{1}^{k_{1}} \dotsm t_{r}^{k_{r}},
  \end{equation*}
  and we get thus
  \begin{equation*}
    \mathcal{I}(t_{1},\dotsc,t_{r}) = \sum_{k_{1},\dotsc,k_{r}\geq 0} b_{k_{1},\dotsc,k_{r}}t_{1}^{k_{1}}\dotsc t_{r}^{k_{r}},
  \end{equation*}
  where
  \begin{equation*}
    b_{k_{1},\dotsc,k_{r}} = \frac{1}{2i\pi} \sum_{0 \le \alpha_{l} \le k_{l}} \int_{\abs{z} = 1} z^{(2\alpha_{1}-k_{1})n_{1} + \dotsb + (2\alpha_{r}-k_{r})n_{r}} \, z^{-1}\mathrm{d}z.
  \end{equation*}
  Moreover, since
  \begin{equation*}
    \frac{1}{2i\pi} \int_{\vert z\vert  = 1} z^{m}\, z^{-1}\mathrm{d}z =    \begin{cases}
      1 & \text{if} \quad m = 0, \\
      0 & \text{otherwise},
    \end{cases}
  \end{equation*}
  $b_{k_{1},\dotsc,k_{r}}$ is the number of solutions of the linear Diophantine equation
  \begin{equation*}
    2\alpha_{1}n_{1} + \dotsc  + 2\alpha_{r}n_{r} = k_{1}n_{1} + \dotsc  + k_{r}n_{r},\quad \alpha_{i}\geq 0 .
  \end{equation*}
\end{proof}

\begin{exa}
  Let $\Ela$ be the vector space of bidimensional elasticity tensors. Its harmonic decomposition (see example~\ref{ex:dec-harm-4-order}) writes
  \begin{equation*}
    \Ela \simeq 2\HT{0} \oplus \HT{2} \oplus \HT{4}.
  \end{equation*}
  Let us associate to the components $(\lambda, \mu, \bh, \bH)$, the formal variables $(t_{0a},t_{0b},t_{2},t_{4})$. Then, using theorem~\ref{thm:O2-Hilbert-series}, remarks~\ref{rem:full-O2-Hilbert-series} and~\ref{rem:SO2-Hilbert-series}, we get
  \begin{equation*}
    H_{(\Ela,\SO(2))}(t_{0a},t_{0b},t_{2},t_{4}) = \frac{1+t_{2}^{2}t_{4}}{(1-t_{0a})(1-t_{0b})(1-t_{2}^{2})(1-t_{4}^{2})(1-t_{2}^{2}t_{4})},
  \end{equation*}
  and
  \begin{equation*}
    H_{(\Ela,\OO(2))}(t_{0a},t_{0b},t_{2},t_{4}) = \frac{1}{(1-t_{0a})(1-t_{0b})(1-t_{2}^{2})(1-t_{4}^{2})(1-t_{2}^{2}t_{4})}.
  \end{equation*}
  The Taylor expansion of $H_{(\Ela,\SO(2))}$ writes
  \begin{multline*}
    H_{(\Ela,\SO(2))}(t_{0a},t_{0b},t_{2},t_{4}) = 1+t_{0a}+t_{0b}+t_{0a}t_{0b}+t_{0a}^{2}+t_{0b}^{2}+t_{2}^{2}+t_{4}^{2}+\dotsb \\
    +2\,t_{2}^{2}t_{4}+\dotsb +3\,t_{0a}t_{2}^{4}t_{4}^{2}+\dotsb +11\,t_{2}^{24}t_{4}^{10}+\dotsb
  \end{multline*}
  We deduce from this expansion that the space of homogeneous invariants of multi-degree $(1,0,0,0)$, corresponding to $t_{0a}$, is of dimension $1$ (and spanned by $\lambda$) and that the space of homogeneous invariants of multi-degree $(0,0,2,0)$ corresponding to $t_{2}^{2}$ is of dimension $1$ (and spanned by $\tr \ba^{2}$).
\end{exa}

\section{Proofs}
\label{sec:proofs}

It is useful in invariant theory to introduce the following bi-differential operators
\begin{equation*}
  \triangle_{\alpha \beta} := \frac{\partial^{2}}{\partial x_{\alpha} \partial x_{\beta}} + \frac{\partial^{2}}{\partial y_{\alpha} \partial y_{\beta}}, \qquad \Omega_{\alpha \beta} := \frac{\partial^{2}}{\partial x_{\alpha} \partial y_{\beta}} - \frac{\partial^{2}}{\partial y_{\alpha} \partial x_{\beta}},
\end{equation*}
which are known respectively as the \emph{polarized Laplacian} and the \emph{Cayley operator}. The polarized Laplacian $\triangle_{\alpha \beta}$ is invariant under $\OO(2)$ and the Cayley operator $\Omega_{\alpha \beta}$ is invariant under $\SL(2,\RR)$. Anyway, they are both $\SO(2)$-invariant. Since we shall use the complex variables $(z,\overline{z})$ rather than the real variables $(x,y)$, we introduce also the complex differential operators
\begin{equation*}
  \frac{\partial}{\partial z} = \frac{1}{2}\left(\frac{\partial}{\partial x} - i \frac{\partial}{\partial y}\right), \qquad \frac{\partial}{\partial \overline{z}} = \frac{1}{2}\left(\frac{\partial}{\partial x} + i \frac{\partial}{\partial y}\right),
\end{equation*}
and the complex bi-differential operator
\begin{equation*}
  D_{\alpha \overline{\beta}} := \frac{\partial^{2}}{\partial z_{\alpha} \partial \overline{z}_{\beta}},
\end{equation*}
and we get
\begin{equation*}
  D_{\alpha \overline{\beta}} =  \frac{1}{4}\left(\triangle_{\alpha \beta} + i\Omega_{\alpha \beta} \right),
\end{equation*}
or
\begin{equation*}
  \triangle_{\alpha \beta} =  2\left(D_{\alpha \overline{\beta}} + D_{\beta \overline{\alpha}}\right), \qquad \Omega_{\alpha \beta} = -2i\left(D_{\alpha \overline{\beta}} - D_{\beta \overline{\alpha}}\right).
\end{equation*}

Given two homogeneous polynomials $\rp_{1}$, $\rp_{2}$ of respective degree $n_{1}$ and $n_{2}$, we have then
\begin{align*}
  \left\{\rp_{1},\rp_{2}\right\}_{r} & = \frac{(n_{1}-r)!(n_{2}-r)!}{n_{1}! n_{2}!} \left( \Delta_{\alpha \beta}^{r} \rp_{1}(\xx_{\alpha}) \rp_{2}(\xx_{\beta}) \right)_{\xx_{\alpha}=\xx_{\beta}=\xx}
  \\
                                     & = 2^{r} \frac{(n_{1}-r)!(n_{2}-r)!}{n_{1}! n_{2}!} \left( \left(D_{\alpha \overline{\beta}} + D_{\beta \overline{\alpha}}\right)^{r} \rp_{1}(\xx_{\alpha}) \rp_{2}(\xx_{\beta}) \right)_{\xx_{\alpha}=\xx_{\beta}=\xx}
  \\
                                     & = 2^{r} \frac{(n_{1}-r)!(n_{2}-r)!}{n_{1}! n_{2}!} \sum_{k=0}^{r} \binom{r}{k} \frac{\partial^{r} \rp_{1}}{\partial z^{k}\partial \overline{z}^{(r-k)}}\frac{\partial^{r}\rp_{2}}{\partial z^{(r-k)}\partial \overline{z}^{k}}.
\end{align*}

Now, if we apply this formula to two harmonic polynomials $\rh_{1} = \Re (\overline{z}_{1}z^{n_{1}})$ and $\rh_{2} = \Re (\overline{z}_{2}z^{n_{2}})$ with $1 \le r \le n_{1} \le n_{2}$, the only non-vanishing terms in the sum correspond to $k=0$ and $k=r$. We get thus
\begin{equation}\label{eq:transvectant-h1-h2}
  \left\{\rh_{1},\rh_{2}\right\}_{r} = 2^{r-1} (z\overline{z})^{n_{1}-r}\Re\left( z_{1}\overline{z}_{2}z^{n_{2}-n_{1}}\right).
\end{equation}

On the other hand, we have
\begin{equation*}
  \begin{aligned}
    \left[\rp_{1},\rp_{2}\right] & = -\frac{1}{n_{1}n_{2}} \left( \Omega_{\alpha \beta} \rp_{1}(\xx_{\alpha}) \rp_{2}(\xx_{\beta}) \right)_{\xx_{\alpha}=\xx_{\beta}=\xx}
    \\
                                 & = \frac{2i}{n_{1}n_{2}} \left( (D_{\alpha \overline{\beta}} - D_{\beta \overline{\alpha}}) \rp_{1}(\xx_{\alpha}) \rp_{2}(\xx_{\beta}) \right)_{\xx_{\alpha}=\xx_{\beta}=\xx}
    \\
                                 & = \frac{2i}{n_{1}n_{2}} \left( \frac{\partial \rp_{1}}{\partial z}\frac{\partial \rp_{2}}{\partial \overline{z}} - \frac{\partial \rp_{1}}{\partial \overline{z}}\frac{\partial \rp_{2}}{\partial z} \right).
  \end{aligned}
\end{equation*}
Applying this formula, first, to $\rh_{1} = \Re (\overline{z}_{1}z^{n_{1}}) $ and $\rh_{2} = \Re (\overline{z}_{2}z^{n_{2}})$, where $1 \le n_{1} \le n_{2}$, we get
\begin{equation}\label{eq:LP-h1-h2}
  \left[\rh_{1},\rh_{2}\right] =  (z\overline{z})^{n_{1}-1} \Im \left(z_{1}\overline{z}_{2} z^{n_{2}-n_{1}}\right),
\end{equation}
and, then, to the Euclidean metric $\qq := x^{2}+y^{2} = z\overline{z}$ and $\rh_{1} = \Re (\overline{z}_{1}z^{n_{1}})$, where $1 \le n_{1}$, we get
\begin{equation}\label{eq:LP-q-h1}
  \left[\qq,\rh_{1}\right] = \Im(\overline{z}_{1}z^{n_{1}}) = \widetilde{\rh}_{1}.
\end{equation}

Next, observe that given $p$ harmonic polynomials $\rh_{1}, \dotsc , \rh_{p}$, where $\rh_{k} = \Re(\overline{z}_{k}z^{n_{k}})$, the leading harmonic part of the product $\rh_{1} \dotsm \rh_{p}$
writes
\begin{equation}\label{eq:product-harmonic-part}
  (\rh_{1} \dotsm \rh_{p})^{0} = (\Re(\overline{z}_{1}z^{n_{1}}) \dotsm \Re(\overline{z}_{p}z^{n_{p}}))^{0} = \frac{1}{2^{p-1}} \Re(\overline{z}_{k_{1}}\dotsm \overline{z}_{k_{p}} z^{n_{k_{1}}+ \dotsb + n_{k_{p}}})
\end{equation}
according to~\autoref{sec:Sym-harmonic-decomposition}. For instance, if $1\le n_{1}\le n_{2} \le n_{3}$ and $n_{3} \le n_{1} + n_{2}$, we have
\begin{multline*}
  \Re(\overline{z}_{1} z^{n_{1}}) \Re(\overline{z}_{2} z^{n_{2}}) \Re(\overline{z}_{3} z^{n_{3}}) = \frac{1}{2^{2}}\left\{ \Re(\overline{z}_{1}\overline{z}_{2}\overline{z}_{3}z^{n_{1}+n_{2}+n_{3}}) + \qq^{n_{1}}\Re(z_{1}\overline{z}_{2}\overline{z}_{3}z^{-n_{1}+n_{2}+n_{3}}) \right.
  \\
  \left. + \qq^{n_{2}}\Re(\overline{z}_{1}z_{2}\overline{z}_{3}z^{n_{1}-n_{2}+n_{3}}) + \qq^{n_{3}}\Re(\overline{z}_{1}\overline{z}_{2}z_{3}z^{n_{1}+n_{2}-n_{3}})\right\},
\end{multline*}
and its leading harmonic term, of degree $n_{1}+n_{2}+n_{3}$ in $z, \overline{z}$, is
\begin{equation*}
  (\Re(\overline{z}_{1} z^{n_{1}}) \Re(\overline{z}_{2} z^{n_{2}}) \Re(\overline{z}_{3} z^{n_{3}}))^0 = \frac{1}{2^{2}}\Re(\overline{z}_{1}\overline{z}_{2}\overline{z}_{3}z^{n_{1}+n_{2}+n_{3}}).
\end{equation*}

Finally, given an homogeneous harmonic polynomial $\rh$ of degree $n$ and $r \ge 0$, we have
\begin{equation*}
  \triangle^{r}(\qq^{r}\rh) = 4^{r}\frac{r!(n+r)!}{n!}\rh, 
\end{equation*}
and thus, for $n_{1}\leq n_{2}$, we get by~\eqref{eq:LP-h1-h2}
\begin{equation}\label{eq:Laplacian-LP}
  \triangle^{n_{1}-1}\left[\rh_{1},\rh_{2}\right] = \frac{4^{n_{1}-1}(n_{1}-1)!(n_{2}-1)!}{(n_{2}-n_{1})!} \Im \left(z_{1}\overline{z}_{2} z^{n_{2}-n_{1}}\right),
\end{equation}
since $\Im \left(z_{1}\overline{z}_{2} z^{n_{2}-n_{1}}\right)$ is of degree $n_{2}-n_{1}$.

\begin{proof}[Proof of theorem~\ref{thm:1xH}]
  Let $\bH_{k} \in \HT{n_{k}}$, where $n_{k} \ge 1$ and set $\rh_{k} = \phi(\bH_{k})$. Then,
  \begin{equation*}
    \widetilde \rh_{k} = \Im (\overline{z}_{k}z^{n_{k}}) =  \left[\qq,\rh_{k}\right] = \phi(\id \times \bH_{k}),
  \end{equation*}
  by~\eqref{eq:LP-q-h1}, and as $\id \times \bH_{k}$ translates as $[\qq, \rh_{k}]$, according to~\autoref{sec:tensors}.
\end{proof}

\begin{proof}[Proof of theorem~\ref{thm:harmonic-product}]
  Let $\bH_{j}\in \HT{n_{k_{j}}}$ be harmonic tensors where $n_{k_{j}} \ge 1$, for $1 \le j \le p$ and set $\phi(\bH_{j}) = \rh_{j} = \Re(\overline{z}_{j} z^{n_{k_{j}}})$. Then, $\phi\left(\bH_{1} \odot \dotsb \odot \bH_{p}\right) = \rh_{1} \dotsm \rh_{p}$ and by~\eqref{eq:product-harmonic-part}, we get thus
  \begin{align*}
    \phi\left( (\bH_{1} \odot \dotsb \odot \bH_{p})^{\prime} \right) & = (\rh_{1} \dotsm \rh_{p})^{0}
    \\
                                                                     & = \frac{1}{2^{p-1}} \Re(\overline{z}_{1}\dotsm \overline{z}_{p} z^{n_{k_{1}}+ \dotsb + n_{k_{p}}}).
  \end{align*}
\end{proof}

\begin{proof}[Proof of theorem~\ref{thm:monomials-translation}]
  Let $\bH_{j}\in \HT{n_{k_{j}}}$ be harmonic tensors where $n_{k_{j}} \ge 1$ for $1 \le j \le p+s$ and set $\phi(\bH_{j}) = \rh_{j} = \Re(\overline{z}_{j} z^{n_{k_{j}}})$, $Z_{1} = z_{1}\dotsm z_{p}$, $Z_{2} = z_{p+1}\dotsm z_{p+s}$, $N_{1} = n_{k_{1}} + \dotsb + n_{k_{p}}$ and $N_{2} = n_{k_{p+1}} + \dotsb + n_{k_{p+s}}$. There is no loss of generality in assuming that $N_{1} \le N_{2}$. Then, by~\eqref{eq:transvectant-h1-h2} with $r=N_{1}$, we get
  \begin{equation*}
    \Re\left(z_{1}\dotsm z_{p} \zb_{p+1} \dotsm \zb_{p+s} z^{N_{2}-N_{1}}\right) = \Re(Z_{1} \overline{Z}_{2} z^{N_{2}-N_{1}}) = \frac{1}{2^{N_{1}-1}} \left\{\mathrm{H}_{1},\mathrm{H}_{2}\right\}_{N_{1}},
  \end{equation*}
  where $\mathrm{H}_{i} := \Re (\overline{Z}_{i}z^{N_{i}})$ for $i=1,2$. But, by~\eqref{eq:product-harmonic-part}, we have
  \begin{equation*}
    \mathrm{H}_{1} = 2^{p-1} (\rh_{1} \dotsm \rh_{p})^{0} = 2^{p-1}\phi\left( (\bH_{1} \odot \dotsb \odot \bH_{p})^{\prime} \right),
  \end{equation*}
  and
  \begin{equation*}
    \mathrm{H}_{2} = 2^{s-1} (\rh_{p+1} \dotsm \rh_{p+s})^{0} = 2^{s-1}\phi\left( (\bH_{p+1} \odot \dotsb \odot \bH_{p+s})^{\prime} \right).
  \end{equation*}
  But, according to~\autoref{sec:tensors}
  \begin{equation*}
    \left\{\mathrm{H}_{1},\mathrm{H}_{2}\right\}_{N_{1}} = 2^{p+s-2} \phi\left( (\bH_{1} \odot \dotsb \odot \bH_{p})^{\prime} \rdots{N_{1}} (\bH_{p+1} \odot \dotsb \odot \bH_{p+s})^{\prime} \right).
  \end{equation*}
  We get therefore
  \begin{multline*}
    \Re\left(z_{1}\dotsm z_{p} \zb_{p+1} \dotsm \zb_{p+s} z^{N_{2}-N_{1}}\right)
    \\
    = 2^{(p+s-1-N_{1})} \phi\left( (\bH_{1} \odot \dotsb \odot \bH_{p})^{\prime} \rdots{N_{1}} (\bH_{p+1} \odot \dotsb \odot \bH_{p+s})^{\prime} \right),
  \end{multline*}
  which is the first identity of theorem~\ref{thm:monomials-translation}. Next, by~\eqref{eq:Laplacian-LP}, we get
  \begin{align*}
    \Im\left(z_{1}\dotsm z_{p} \zb_{p+1} \dotsm \zb_{p+s} z^{N_{2}-N_{1}}\right) & = \Im(Z_{1}\overline{Z}_{2}z^{N_{2}-N_{1}})
    \\
                                                                                 & = \frac{(N_{2}-N_{1})!}{4^{N_{1}-1}(N_{1}-1)!(N_{2}-1)!} \triangle^{N_{1}-1}\left[\mathrm{H}_{1},\mathrm{H}_{2}\right].
  \end{align*}
  Besides, we have
  \begin{equation*}
    \Delta^{r}\phi(\bS) = \frac{n!}{(n-2r)!} \phi(\tr^{r} \bS),
  \end{equation*}
  for every symmetric tensor $\bS$ or order $n$. Applying this formula to $\phi(\bS) = \left[\mathrm{H}_{1},\mathrm{H}_{2}\right]$, of degree $n = N_{1}+N_{2}-2$, with $r = N_{1}-1$ and where
  \begin{equation*}
    \bS = 2^{p+s-2}\left( (\bH_{1} \odot \dotsb \odot \bH_{p})^{\prime} \times (\bH_{p+1} \odot \dotsb \odot \bH_{p+s})^{\prime} \right),
  \end{equation*}
  we get
  \begin{equation*}
    \triangle^{N_{1}-1}\left[\mathrm{H}_{1},\mathrm{H}_{2}\right] = 2^{p+s-2} \frac{(N_{1}+N_{2}-2)!}{(N_{2}-N_{1})!} \phi\left(\tr^{(N_{1}-1)} \left( (\bH_{1} \odot \dotsb \odot \bH_{p})^{\prime} \times (\bH_{p+1} \odot \dotsb \odot \bH_{p+s})^{\prime}\right) \right),
  \end{equation*}
  and thus
  \begin{multline*}
    \Im\left(z_{1}\dotsm z_{p} \zb_{p+1} \dotsm \zb_{p+s} z^{N_{2}-N_{1}}\right)
    \\
    = \frac{2^{(p + s - 2N_{1})}(N_{1}+N_{2}-2)!}{(N_{1}-1)!(N_{2}-1)!} \phi \left(\tr^{(N_{1}-1)} \left((\bH_{1} \odot \dotsb \odot \bH_{p})^{\prime} \times (\bH_{p+1} \odot \dotsb \odot \bH_{p+s})^{\prime}\right)\right),
  \end{multline*}
  which is the second identity of theorem~\ref{thm:monomials-translation}. It remains to show that
  \begin{align*}
    \Im(z_{1}\dotsm z_{p} \zb_{p+1} \dotsm \zb_{p+s} z^{N_{2}-N_{1}}) & = -2^{(p + s-1-N_{1})} \phi\left(([\id\times\bH_{1}] \odot \dotsb \odot \bH_{p})^{\prime} \rdots{N_{1}} (\bH_{p+1} \odot \dotsb \odot \bH_{p+s})^{\prime}\right)
    \\
                                                                      & = 2^{(p + s-1-N_{1})} \phi\left((\bH_{1} \odot \dotsb \odot \bH_{p})^{\prime} \rdots{N_{1}} ([\id\times\bH_{p+1}] \odot \dotsb \odot \bH_{p+s})^{\prime}\right).
  \end{align*}
  To do so, we use the fact that $\Im z = \Re (-iz)$ and thus that $\widetilde{\rh}_{k} = \Re(\overline{iz_{k}}z^{n_{k}})$. We have therefore
  \begin{align*}
    \Im(z_{k_{1}}\dotsm z_{k_{p}} \zb_{k_{p+1}} \dotsm \zb_{k_{p+s}} z^{N_{2}-N_{1}}) & = \Re\left((-iz_{k_{1}})\dotsm z_{k_{p}} \zb_{k_{p+1}} \dotsm \zb_{k_{p+s}} z^{N_{2}-N_{1}}\right)
    \\
                                                                                      & = -\Re\left(iZ_{1}\overline{Z}_{2} z^{N_{2}-N_{1}}\right)
    \\
                                                                                      & = -\frac{1}{2^{N_{1}-1}} \left\{\widetilde{\mathrm{H}_{1}},\mathrm{H}_{2}\right\}_{N_{1}}
    \\
                                                                                      & = - \frac{2^{p+s-2}}{2^{N_{1}-1}}\left\{(\widetilde{\rh}_{k_{1}}\dotsm \rh_{k_{p}})',(\rh_{k_{p}+1} \dotsm \rh_{k_{p}+s})'\right\}_{N_{1}}
    \\
                                                                                      & = - 2^{(p + s-1-N_{1})} \phi\left(([\id\times\bH_{1}] \odot \dotsb \odot \bH_{p})^{\prime} \rdots{N_{1}} (\bH_{p+1} \odot \dotsb \odot \bH_{p+s})^{\prime}\right),
  \end{align*}
  by~\eqref{eq:transvectant-h1-h2} and~\eqref{eq:product-harmonic-part}. The same way, we deduce that
  \begin{align*}
    \Im(z_{k_{1}}\dotsm z_{k_{p}} \zb_{k_{p+1}} \dotsm \zb_{k_{p+s}} z^{N_{2}-N_{1}}) & = \Re\left(Z_{1}\overline{iZ_{2}} z^{N_{2}-N_{1}}\right)
    \\
                                                                                      & = \frac{1}{2^{N_{1}-1}} \left\{\mathrm{H}_{1},\widetilde{\mathrm{H}_{2}}\right\}_{N_{1}}
    \\
                                                                                      & = 2^{(p + s-1-N_{1})} \phi\left((\bH_{1} \odot \dotsb \odot \bH_{p})^{\prime} \rdots{N_{1}} ([\id\times\bH_{p+1}] \odot \dotsb \odot \bH_{p+s})^{\prime}\right).
  \end{align*}
\end{proof}


\end{document}